\renewcommand{\ti}{{\times}}
\newcommand{\sfC}{\mathsf C}
\newcommand{\oti}{{\otimes}}
\def\dd{\,\rmd}
\newcommand{\sfD}{\mathsf D}
\newcommand{\vp}{\varphi}
\newcommand{\Om}{\Omega}
\newcommand{\ve}{\varepsilon}
\newcommand{\la}{\langle}
\newcommand{\ra}{\rangle}
\newcommand{\pa}{\partial}
\newcommand{\dn}{\mathrm{d}}
\newcommand{\TS}{\textsf{V}}
\newcommand{\weakstar}{\overset{*}{\rightharpoonup}}
\newcommand{\wk}{\rightharpoonup}
\newcommand{\whh}{\widehat}
\newcommand{\psie}{\psi_{\text{ext}}}
\newcommand{\psiext}{\psi_\mathrm{ext}}
\newcommand{\pp}{\rmp}
\newcommand{\qq}{\rmq}
\newcommand{\mfHS}{\mathfrak H^*}
\newcommand{\grmR}{g_\rmR}
\newcommand{\FF}{\mathscr F}
\newcommand{\HH}{\mathscr H}
\newcommand{\coleq}{\mathrel{\mathop:}=}
\newcommand{\eqcol}{=\mathrel{\mathop:}}
\newcommand{\mquad}{\hspace{-1em}}
\newcommand{\ul}{\underline}
\newtheorem{hypotheses}[theorem]{Hypotheses}
\numberwithin{equation}{section}
\numberwithin{figure}{section}
\begin{document}
	
	\title{On the equilibrium solutions of \\ 
		electro--energy--reaction--diffusion systems}
	
	\author{ 
		Katharina Hopf\thanks{Weierstra\ss-Institut f\"ur Angewandte 
			Analysis und Stochastik, Anton-Wilhelm-Amo-Stra\ss e 39, 10117 Berlin, Germany} 
		\and
		Michael Kniely\footnotemark[1] \thanks{Current address: Universit\"at Graz, 
			Institut f\"ur Mathematik und Wissenschaftliches Rechnen, Heinrichstra\ss e 36, 8010 Graz, Austria, michael.kniely@uni-graz.at (corresponding author)}
		\and
		Alexander Mielke\footnotemark[1] \thanks{Humboldt-Universit\"at zu Berlin, Institut f\"ur Mathematik, Rudower Chaussee 25, 12489 Berlin, Germany}
	}
	
	\date{13 February 2026}
	
	\maketitle
	
	\begin{abstract}
        Electro--energy--reaction--diffusion systems are thermodynamically 
        consistent continuum models for reaction--diffusion processes that 
        account for temperature and electrostatic effects in a way that total 
        charge and energy are conserved. 
        The question of the long-time asymptotic behavior of 
        electro--energy--reaction--diffusion systems motivates the characterization 
        of their equilibrium solutions, which leads to a maximization problem of 
        the entropy on the manifold of states with fixed values for the 
        linear charge and the nonlinear convex energy functional. 
        As the main result, we establish the existence, uniqueness, and 
        regularity of solutions to this constrained optimization problem. 
        We give two conceptually different proofs, which are related to 
        different perspectives on the constrained maximization problem. 
        The first one is based on the method of Lagrange multipliers, while the  
        second one employs the direct method of the calculus of variations.
	\end{abstract}
	
	\small
	\paragraph{Key words and phrases:} reaction--diffusion systems, temperature, 
	electrostatic potential, critical points under convex constraints, Legendre transform, Lagrange multiplier, direct method. 
	
	\paragraph{2020 Mathematics Subject Classification:} Primary 35Q79; Secondary 49S05, 78A30, 49K20, 49J45.
	\normalsize
	
\section{Introduction}
\label{se:Intro}

Non-isothermal reaction--diffusion systems for electrically charged 
constituents appear in numerous situations both in natural sciences and 
engineering applications. For instance, describing chemical reactions of 
ions generally demands for a model taking also energetic effects into 
account. In particular, this is necessary in 
the case of endothermic and exothermic reactions, which, respectively, ``consume'' 
and ``produce'' heat during the reaction process. Another example, where reactive, 
diffusive, energetic, and electrostatic effects are essential ingredients, are the 
dynamics of charge carriers in high-energy semiconductor devices. 
	
In this contribution, we establish the existence of a unique 
global equilibrium for a system of $I \in \bbN$ electrically charged 
constituents, which are continuously distributed in a bounded Lipschitz domain 
$\Omega \subset \bbR^d$ with $d \geq 1$. 
We use $\bfc = (c_i)_i \in [0, \infty)^I$ to denote the 
corresponding vector of concentrations, and we employ the internal energy density 
$u \in [0, \infty)$ as the main thermodynamic variable of the system. 
One could also choose the temperature $\theta \in [0, \infty)$ instead of the 
internal energy, but this would lead to technical difficulties in our approach, 
which will be discussed below in more detail. 
Thus, our theory starts essentially from the modeling paper 
\cite{AlGaHu02TDEM} for thermodynamically consistent 
electro--energy--reaction--diffusion systems (EERDS),
but similar to \cite{Miel11GSRD,Miel13TMER,MieMit18CEER}, we
exploit the theory of gradient systems. The wording ``thermodynamically 
consistent'' refers to the fact that fundamental laws of physics and 
especially thermodynamics are satisfied -- in our situation, the conservation
of charge and energy as well as the production of entropy. 
	
Using the state variable $\bfZ =\bfZ(x)$, $\bfZ\coleq (\bfc, u)\in 
[0,\infty)^{I+1}$ and the dual entropy-production potential 
$\calP^*(\bfZ, \bfW)$ involving 
dual variables $\bfW=\bfW(x)$, $\bfW\coleq (\bfy, v) \in \bbR^{I+1}$ 
defined in \eqref{eq:dual-variables}, the evolution equation reads
\begin{align}
 \label{eq:general-system}
	\dot \bfZ = \pl_{\bfW} \calP^*\big(\bfZ; \rmD \calS(\bfZ)\big). 
\end{align}
Here, the entropy functional 
\begin{align}
 \label{eq:entropy-functional}
	\calS (\bfZ) = \int_\Omega S\big( \bfc(x), u(x) \big) \dd x
\end{align}
plays the role of the so-called driving functional for the gradient-flow
system \eqref{eq:general-system}. Fundamental thermodynamics 
\cite{LieYng99PMSL} demand that the entropy density 
$S : (0, \infty)^{I+1} \rightarrow \bbR$ is concave (and, hence, 
continuous); this concavity property is generally not satisfied if 
the entropy is defined as a 
function of the concentrations $\bfc$ and the temperature $\theta$. 

For this introduction, it suffices to note that the dual entropy-production potential 
\begin{align}
\label{eq:dual-entropy-production}
	\calP^*(\bfZ, \bfW) = \calP^*_\mathrm{diff}(\bfZ, \bfW) + \calP^*_\mathrm{reac}(\bfZ, \bfW)
\end{align}
is the sum of diffusive and reactive contributions, which are non-negative 
and consistent with the conservation of total energy $\calE$ and 
total charge $\calQ$ given by 
\[
  \calE(\bfc,u) \coleq \int_\Omega \Big( u + \frac\eps2|\nabla 
  \psi_\bfc|^2 \Big) \dd x \quad \text{and} \quad \calQ(\bfc)
  \coleq \int_\Omega \bfq {\cdot} \bfc \dd x, 
\]
where $\psi_\bfc$ is the solution of the Poisson equation 
\eqref{eq:poisson}, and where $\bfq \in \bbR^I$ denotes the vector 
of charge numbers for the individual species, e.g., $q_1 = -1$ and 
$q_2 = 1$ for electrons and holes in a semiconductor. We refer to 
Appendix \ref{app:TimeDepModel} for more details on the time-dependent 
EERDS \eqref{eq:general-system}. In particular, along positive and 
sufficiently smooth solutions of \eqref{eq:general-system}, one can easily show 
\[
  \frac{\rmd}{\rmd t} \calS(\bfZ) \geq 0, \quad
  \frac{\rmd}{\rmd t} \calE(\bfZ) = 0, \quad
  \frac{\rmd}{\rmd t} \calQ(\bfZ) = 0, 
\]
see \cite{Miel11GSRD,Miel13TMER,MieMit18CEER}. Thus, one expects 
that most solutions converge to a local maximizer of the entropy $\calS$ 
under the given constraints $\calE(\bfZ)=E_0$ and 
$\bfQ(\bfZ)=Q_0$, where $E_0$ and $Q_0$ are determined by the initial
conditions. If there is a unique global maximizer under the given 
constraints, then all solutions are expected to converge to it.  
This relates to the \emph{maximum-entropy principle}, 
see \cite[Sec.~3]{AlGaHu02TDEM}, which states that all steady states of 
\eqref{eq:general-system} are given as maximizers of $\calS$ under given 
values of the constraints $\calE$ and $\calQ$. In this paper, we make
this principle rigorous and show, under suitable technical assumptions, that
for each suitable pair $(E_0,Q_0)$ there exists a unique maximizer of $\calS$. 
It turns out that the choice $\bfZ=(\bfc,u)$ is crucial here, because $\calS$ 
is concave in these variables but, in general, not with respect to $(\bfc,\theta)$. 

In \cite[Thm.~5.2]{GliHun05SEMS}, the existence of thermodynamic equilibria
was established for a more specific two-dimensional model in a completely different way, 
namely by prescribing the (constant) temperature $\theta_0$ and one of the two 
(constant) electrochemical potentials $\zeta_1$. 
These two constants correspond to our $\eta$ and $\kappa$ used in Section 
\ref{se:proofs-LagrangianApproach} (see, e.g., \eqref{eq:k-functional}), which are dual Lagrange parameters to 
$E_0$ and $Q_0$, respectively, see Remark \ref{re:ConstantPotentials} for more explanation. 
Starting from $(\theta_0,\zeta_1)$ makes the construction of equilibria easier 
but does not uncover the relation to the maximum-entropy principle. 
However, \cite{GliHun05SEMS}  goes much further in another direction, 
by showing existence of steady-state solutions out of thermodynamic equilibrium
if the boundary conditions are close to those of a thermodynamic equilibrium solution. 
    
For a prescribed doping profile $D \in \rmL^{p_\Omega}(\Omega)$, 
$p_\Omega > \max\{1,d/2\}$, we define the electrostatic potential 
$\Psi : \Omega \rightarrow \bbR$ as the solution to Poisson's equation 
\begin{subequations}
    \label{eq:I.PoissonAll}
\begin{align}
	\label{eq:poisson}
	- \DIV \big( \eps \nabla \Psi \big) = \bfq {\cdot} \bfc + D, 
\end{align}
where the charge distribution on the right-hand side involves the charge vector 
$\bfq \in \bbR^I$. The function $\eps \in \rmL^\infty(\Omega)$, $\eps \geq \ul\eps > 0$, 
models the (uniformly positive) permittivity of the material. 
We endow \eqref{eq:poisson} with mixed Dirichlet--Robin boundary conditions 
\begin{align}
	\label{eq:poisson-bc}
	\Psi = 0 \ \ \text{on} \ \Gamma_\rmD \quad \text{and} \quad 
	\varepsilon \nu {\cdot} \nabla \Psi + \omega \Psi = \grmR 
	\ \ \text{on} \ \Gamma_\rmR
\end{align}
\end{subequations}
with boundary data $\grmR \in \rmL^{p_{\rmR}}(\Gamma_\rmR)$, 
$p_{\rmR} > \max\{1,d{-}1\}$ modeling a given surface charge density. 
We emphasize that our theory needs homogeneous Dirichlet data to allow for 
the variational approaches induced by the thermodynamic maximum-entropy principle, see also \cite[Thm.~5.2]{GliHun05SEMS} (where $\varphi_D=\Psi|_{\Gamma_\rmD}$ is assumed to be constant). 
The Dirichlet part of the boundary $\Gamma_\rmD \subset \partial \Omega$ 
	is supposed to be a measurable subset of $\pl \Omega$ and to either have a positive 
	measure $\calH^{d-1}(\Gamma_\rmD) > 0$ or to be empty. 
	The complement $\Gamma_\rmR \coleq \partial \Omega \backslash \Gamma_\rmD$ 
	denotes the Robin part of the boundary, and 
	the coefficient function $\omega \in \rmL^\infty(\Gamma_\rmR)$, 
	$\omega \geq 0$ even allows us to include pure Neumann boundary conditions 
	via $\omega \equiv 0$. 
	This kind of non-homogeneous mixed boundary conditions has already been 
	employed in \cite{AlGaHu02TDEM}. 
	Finally, we suppose that the following compatibility conditions hold: 
	\begin{align}
		\int_\Omega \bfq {\cdot} \bfc \dd x = 0 \quad \text{and} \quad 
		\int_\Omega D \dd x + \int_{\pl \Omega} \grmR \dd a = 0 
		&&&\text{if} \ \Gamma_\rmD = \emptyset \ \text{and} \ \omega \equiv 0. 
		\label{eq:compatibility-neumann}
	\end{align}
	The identities \eqref{eq:compatibility-neumann} are necessary to guarantee 
	the existence of solutions to \eqref{eq:I.PoissonAll} in the pure Neumann case.

	The interest in reaction--diffusion models taking temperature and electrostatic 
	effects into account increased over the last couple of years. The first 
	contribution towards a rigorous non-isothermal reaction--diffusion model for 
	electrons and holes in a semiconductor device appeared in \cite{Wach90RTTH}; 
	both the stationary and the time-dependent case are discussed therein. The same 
	model equations were subsequently derived in \cite{AlGaHu02TDEM} from a free 
	energy functional relying only on fundamental thermodynamical principles. 
	The generalization to an arbitrary number of charged species was performed in 
	\cite{Miel11GSRD} (see also \cite{Miel15TCQM}). 
	All these results focus on modeling issues and formal considerations. 
    A first existence result for stationary solutions was obtained in \cite{GliHun05SEMS}. To our 
	knowledge, the first global existence result for weak solutions the time-dependent setting
	was provided fairly recently in \cite{BuPoZa17EAIF}. In that paper, the authors 
	consider a thermodynamically and mechanically consistent fluid mixture of charged 
	constituents subject to reaction, drift-diffusion, and heat conduction. On the 
	one hand, the result covers a remarkably large class of models and electro-thermal 
	effects, e.g., the Soret and Dufour effects. On the other hand, the existence 
	analysis heavily relies on the assumption that the constituents are dissolved in a solvent, 
	which a priori leads to bounded concentrations and bounded reaction terms. 
	We also mention the recent note \cite{KanKop20NISG} on a thermodynamically consistent 
	generalization of the model in \cite{AlGaHu02TDEM} to arbitrary particle statistics.
	
	Our motivation for studying EERDS originates from the previous work \cite{FHKM22GEAE} 
	on global solutions to pure energy--reaction--diffusion systems and related publications 
	\cite{Miel11GSRD,HHMM18DEER,MieMit18CEER,Hopf_2022} on the large-time behavior and 
    weak--strong uniqueness of these systems. 
	Moreover, reaction--diffusion systems coupled to Poisson's equation were investigated in 
	\cite{FelKni18EMEC,FelKni21UCEF}, where exponential convergence to the equilibrium was shown. 

    The main result of this article on the existence of equilibrium states 
    is relevant for a large class of volume reactions and volume drift--diffusion processes which can be 
    expressed in the form \eqref{eq:general-system} with dual entropy-production potentials 
    $\calP^*_\mathrm{reac}(\bfZ, \bfW)$ and $\calP^*_\mathrm{diff}(\bfZ, \bfW)$ as in 
    \eqref{eq:dual-entropy-production}. We briefly discuss in Appendix \ref{app:TimeDepModel} 
    that reversible mass-action reactions and linear diffusion with electrostatic drift are 
    covered by our framework. In this setting, we also argue that stationary states of 
    the EERDS are (constrained) critical points of the total entropy functional $\calS$. 
    And as we prove below that any (constrained) critical point of $\calS$ coincides with 
    the unique global equilibrium of \eqref{eq:general-system}, we expect that any global 
    solution to \eqref{eq:general-system} tends to the equilibrium in the large-time limit. 
    There are several points which are beyond the scope of this paper and which require further research: 
    (i) the existence of global solutions to the evolutionary problem \eqref{eq:general-system}, 
    (ii) the large-time equilibration of global solutions, as well as (iii) reactions and 
    diffusion on interfaces and the boundary.
    
	We approach our goal of proving the existence of a unique 
    global equilibrium to \eqref{eq:general-system} 
    with two mostly independent strategies. The first one relies on the technique 
    of Lagrange multipliers proving that a unique critical point $(\bfc^\ast, u^\ast)$ 
    of $\calS$ subject to the constraints \eqref{eq:constraints} exists 
    (cf.\ Definition \ref{def:equilibrium}) in the class of continuous and 
    uniformly positive functions. Using the structure of the constraints, 
    we further show that the critical point is globally optimal within this class.
    The second strategy builds on the direct method of the calculus of variations, 
    which allows us to work in a setting of low regularity. 
    Assuming that the set of states $(\bfc, u)$ 
    satisfying the charge and energy constraints is non-empty, we establish
    the existence (and uniqueness) of a constrained maximizer of $\calS$ 
    in a rather straightforward manner by relying on the (strict) concavity of the 
    entropy and its monotonicity with respect to the internal energy component. 
    The continuity and uniform positivity of this optimizer is subsequently deduced
    by a regularization and limiting procedure. 
    
	A key functional appearing in both optimization approaches mentioned above 
    is the Legendre transform $H^*:=\mathfrak L(H)$ of the convex function $(\bfc,u)\mapsto H(\bfc,u) \coleq -S(\bfc,u)$, given by
    \begin{align}
		\label{eq:def-dual-entropy}
		H^\ast(\bfy, v) \coleq \sup \big\{\bfy {\cdot} \bfc + v u + S(\bfc, u) 
		\ \big| \ (\bfc, u) \in (0, \infty)^{I+1} \big\}.
	\end{align}
	The function $H^\ast$ is the so-called \emph{dual entropy} 
    defined for $\bfy \in \bbR^I$ and $v < 0$. We will later see that $H^\ast(\bfy, v)$ 
	appears, for instance, when looking for critical points of the Lagrange functional 
	$\calL$ related to the constrained maximization of $\calS$. The actual quantity 
	of interest, in this context, is the restriction of the dual entropy 
    $H^\ast$ to a two-dimensional subspace of $\bbR^{I+1}$ involving 
    $\mu \in \bbR$ and $\eta > 0$, the \emph{reduced dual entropy} 
	\begin{align}
		\label{eq:def-fraks-ast}
		\ol{H^*}(\mu, \eta) \coleq H^*({-}\mu \bfq, -\eta).
	\end{align}
    The relevant entropic quantities are summarized in Figure \ref{fig:entropies}. 
\begin{figure}[t]
		\centering
		\begin{tikzpicture}
           \draw[thick] (-3.5,-1) rectangle (7.5,1);
			\node (P1) at (-1.15,0) {$H(\boldsymbol c,u) = -S(\boldsymbol c,u)\ $} ;
			\node (P2) at (3,0) {$\ H^\ast(\boldsymbol y, v)\ $};
			\node (P4) at (6,0) {$\ \ol{H^*}(\mu, \eta)$};
			\draw[->] (P1) -- node [above,text width=1.5cm,midway,align=center]{\small $\mathfrak L$} (P2);
			\draw[->] (P2) -- (P4);
		\end{tikzpicture}
		\caption{Relation between the different types of convex entropy functions: 
        the negative entropy $-S = H : (0, \infty)^I \times (0, \infty) \rightarrow \bbR$, 
        the dual entropy $H^\ast : \bbR^I \times (-\infty, 0) \rightarrow \bbR$, and 
        the reduced dual entropy $\ol{H^*} : \bbR \times (0, \infty) \rightarrow \bbR$. 
        See \eqref{eq:def-dual-entropy} and \eqref{eq:def-fraks-ast} 
        for the consecutive definitions of $H^\ast$ and $\ol{H^*}$.}
		\label{fig:entropies}
    \end{figure}

    To make the discussion more concrete, we give in Examples \ref{ex:entropies} 
    and \ref{ex:second-entropies} below two methods to 
    derive admissible classes of entropy densities 
    $S(\bfc, u)$, which have been employed recently in studies of 
    energy--reaction--diffusion systems. Our main results in 
    Sections \ref{su:LagrangianApproach} and \ref{su:DirectMethod}, 
    however, hold for general entropy functions subject to 
    appropriate hypotheses formulated therein.
    Typical entropy densities $S(\bfc, u)$ consist of 
    Boltzmann-type functions $c_i \log c_i$ modeling the 
    concentration-based entropy. The purely thermal part $\sigma(u)$ of the entropy 
    is usually taken to be a sublinear function of $u$ to account for the desired 
    concavity of $S(\bfc, \cdot)$. 
    Moreover, equilibrium concentrations $\bfw(u) = (w_1(u), \dotsc, w_I(u))$ 
	enter the definition of $S(\bfc, u)$ in such a way that 
	\[
	\bfw(u) = \mathop{\mathrm{argmax}}_{\bfc \in (0, \infty)^I}\; S( {\cdot} ,u), 
    \quad \text{i.e.,} \quad \rmD_\bfc S(\bfw(u),u)=0 
	\]  
	by the concavity of $S(\cdot, u)$. 
    The interaction part of the entropy coupling concentrations 
    $c_i$ and the internal energy $u$ is chosen in such a way that the 
    condition above is satisfied. 
    The authors of \cite{MieMit18CEER} propose, for instance, the consistent 
    choice 
	\begin{align}
		S(\bfc,u) = \sigma(u) - \sum_{i=1}^{I} \big( c_i\log c_i - c_i - c_i\log
		w_i(u)\big),
	\label{eq:S.MiMi}\end{align}
	where the equilibrium densities $w_i:{[0,\infty)}\to {(0,\infty)}$ 
    are increasing and concave functions. 
    This class of entropy functions was subsequently employed in \cite{FHKM22GEAE,Hopf_2022} 
    in the context of renormalized solutions to energy--reaction--diffusion systems. 
    We will give explicit representations for $H^\ast$ and $\ol{H^*}$ in a 
    slightly simplified setting in Example \ref{ex:entropies}. 
	Let us now recall that 
	$
	\theta = (\rmD_u S(\bfc,u))^{-1} \eqcol \Theta(\bfc,u)
	$
	gives an expression for the temperature in terms of $\bfc$ and $u$, 
    which can be equivalently converted into $u=U(\bfc,\theta)$
	because $\rmD_u\Theta(\bfc, u)>0$ by the strict concavity of $S$. 
    At the (local) equilibrium, we expect 
    \[
        \Theta(\bfw(u), u) \ \text{constant in }\Omega, \quad \text{i.e.,} \quad 
        \rmD_u S(\bfw(u),u) \ \text{constant in }\Omega.
    \]
    This condition is obviously satisfied for any constant function 
    $u : \Omega \rightarrow (0, \infty)$. But already in this quite elementary 
    situation, it is not clear whether one can fulfill the charge and energy 
    constraints \eqref{eq:constraints} by adapting $u$ appropriately.

    The remainder of the paper is organized as follows. In Section \ref{se:electro-energy}, 
    we rigorously introduce the electrostatic potential and the electrostatic energy, 
    and we give some auxiliary results on the minimal value of the electrostatic energy. 
    More elaborate proofs are shifted to Section \ref{se:proofs-electro-energy} at the 
    end of the paper. The main results on the existence, uniqueness, and regularity 
    of equilibrium states are presented in Section \ref{se:results} -- 
    in Subsection \ref{su:LagrangianApproach} in the framework of the Lagrangian method, 
    and in Subsection \ref{su:DirectMethod} in the context of 
    the direct method of the calculus of variations. 
    We collect examples on admissible entropy densities in Subsection \ref{su:examples} 
    both in the case of Boltzmann statistics as well as in the case of bounded concentrations.
    The key proofs are carried out in Section \ref{se:proofs-LagrangianApproach} 
    and \ref{se:proofs-DirectMethod}, respectively. Some technical and more 
    involved proofs are contained in Section \ref{se:completing}, 
    while Appendix \ref{app:TimeDepModel} is devoted to a brief discussion 
    of the evolutionary EERDS and its connection to the equilibrium problem.

\section{Electrostatic and energetic principles}
\label{se:electro-energy}
  
\paragraph{Electrostatic potential.}
Following~\cite{AlGaHu02TDEM}, 
we decompose the total electrostatic potential $\Psi$ into 
the internal (or unbiased) and external (or induced) potential 
$\Psi = \psi_\bfc + \psi_\mathrm{ext}$,
where the internal potential $\psi_\bfc$  is determined by 
\begin{subequations}
\label{eq:poisson-parts}
\begin{align}
	 \nonumber &&&&-\DIV \big( \varepsilon \nabla \psi_\bfc \big) &= \bfq {\cdot} \bfc &&\text{in} \ \Omega,
 \\	\label{eq:poisson-parts.1} &&&& \psi_\bfc &= 0 &&\text{on} \ \Gamma_\rmD,
 \\	\nonumber &&&&\varepsilon \nu {\cdot} \nabla \psi_\bfc + \omega \psi_\bfc &= 0&&\text{on} \ \Gamma_\rmR, &&&& 
\end{align}
while the external potential $\psi_\mathrm{ext}$ satisfies
\begin{align}
	&&&& -\DIV \big( \varepsilon \nabla \psi_\mathrm{ext} \big) &= D &&\text{in} \ \Omega, &&&& \nonumber \\ 
	&&&& \psi_\mathrm{ext} &= 0 &&\text{on} \ \Gamma_\rmD, &&&& \label{eq:poisson-parts.2} \\ 
 	&&&&\varepsilon \nu {\cdot} \nabla \psi_\mathrm{ext} + \omega \psi_\mathrm{ext} &= \grmR 
 	&&\text{on} \ \Gamma_\rmR. &&&&  \nonumber
\end{align}
\end{subequations}
Recall that $\Omega \subset \mathbb R^d$, $d \geq 1$, is a bounded Lipschitz domain. 
In addition, the Dirichlet boundary $\Gamma_\rmD \subset \partial \Omega$ is either empty 
or has a positive measure $\calH^{d-1}(\Gamma_\rmD) > 0$, while the Robin boundary equals 
$\Gamma_\rmR \coleq \partial \Omega \backslash \Gamma_\rmD$. 
We impose the following hypotheses on the coefficient functions:
\begin{enumerate}[label={\rm(D\arabic*)}]
 	\item\label{hp:eps} $\eps \in \rmL^\infty(\Omega)$, $\eps \geq \ul\eps$ for some 
    $\ul\ve\in\mathbb R_{>0}$. 
 	\item\label{hp:varrho}   $\omega \in \rmL^\infty(\Gamma_\rmR)$ with
 	$\omega \geq 0$. 
\end{enumerate}
The subsequent definition introduces the natural function space for 
the internal potential $\psi_\bfc$ and the external potential $\psiext$.
	
\begin{definition}[Space $\HH$]
	\label{def:h1-gammad}
	Assume~\ref{hp:eps}, \ref{hp:varrho}.
    We define $\HH \subseteq \rmH^1(\Omega)$ as the subspace
		\begin{align}
			\label{eq:def-h1-gammad-rho}
			\HH \coleq 
			\begin{cases}
				\big\{ \psi \in \rmH^1(\Omega) \ \big| \ \psi|_{\Gamma_\rmD} = 0 \}, 
				\quad &\text{if} \ \calH^{d-1}(\Gamma_\rmD) > 0, \\ 
				\big\{ \psi \in \rmH^1(\Omega) \ \big| \ \int_\Omega \psi \dd x = 0 \}, 
				\quad &\text{if} \ \Gamma_\rmD = \emptyset \ \text{and} \ \omega \equiv 0, \\ 
				\rmH^1(\Omega), 
				\quad &\text{if} \ \Gamma_\rmD = \emptyset \ \text{and} \ \omega \not\equiv 0. 
			\end{cases}
		\end{align}
\end{definition} 
Note that the subspace $\HH \subseteq \rmH^1(\Omega)$ is closed and hence itself a Hilbert space. 
\begin{lemma}[Poisson: existence]
	\label{lemma:poisson.gen}Suppose~\ref{hp:eps}, \ref{hp:varrho}.
    For $\psi, \vp \in \HH$, we define 
\begin{align}
\label{eq:def-l}
   \la L\psi,\vp\ra \coleq  \int_\Om\ve\nabla\psi\cdot\nabla\vp \, \dd x
   +\int_{\Gamma_\rmR} \omega \psi\vp \, \dd a.
\end{align}
Then, the induced bounded linear operator $ L: \HH \to \HH^*$ is invertible.
\end{lemma}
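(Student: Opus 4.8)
The operator $L$ is induced by the symmetric bilinear form
\[
a(\psi,\vp) \coleq \la L\psi,\vp\ra = \int_\Om \ve\,\nabla\psi\cdot\nabla\vp \dd x + \int_{\Gamma_\rmR}\omega\,\psi\vp \dd a ,
\]
so the plan is simply to verify the hypotheses of the Lax--Milgram theorem on the Hilbert space $\HH$. Boundedness is routine: using \ref{hp:eps}, \ref{hp:varrho} and the continuity of the trace operator $\rmH^1(\Om)\to\rmL^2(\Gamma_\rmR)$ with constant $C_{\mathrm{tr}}$, one estimates $|a(\psi,\vp)|\leq\big(\|\ve\|_{\rmL^\infty}+C_{\mathrm{tr}}^2\|\omega\|_{\rmL^\infty}\big)\|\psi\|_{\rmH^1}\|\vp\|_{\rmH^1}$. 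The crux is coercivity, i.e.\ finding $\alpha>0$ with
\[
a(\psi,\psi)=\int_\Om\ve|\nabla\psi|^2\dd x+\int_{\Gamma_\rmR}\omega|\psi|^2\dd a\geq\alpha\|\psi\|_{\rmH^1}^2\qquad\text{for all }\psi\in\HH .
\]

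Since $\ve\geq\ul\ve$ and $\omega\geq0$, the left-hand side always dominates $\ul\ve\|\nabla\psi\|_{\rmL^2}^2$, so the whole difficulty reduces to controlling $\|\psi\|_{\rmL^2}$ by the right-hand side on each of the three realizations of $\HH$. First I would dispose of the two cases in which $\HH$ carries a linear constraint. When $\calH^{d-1}(\Gamma_\rmD)>0$ the Poincar\'e inequality for functions vanishing on a boundary set of positive measure gives $\|\psi\|_{\rmL^2}\leq C\|\nabla\psi\|_{\rmL^2}$ on $\HH$; when $\Gamma_\rmD=\emptyset$ and $\omega\equiv0$ the Poincar\'e--Wirtinger inequality for zero-mean functions does the same. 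In both situations $a(\psi,\psi)\geq\ul\ve\|\nabla\psi\|_{\rmL^2}^2\geq\alpha\|\psi\|_{\rmH^1}^2$ follows immediately.

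The genuinely harder case is $\Gamma_\rmD=\emptyset$, $\omega\not\equiv0$, where $\HH=\rmH^1(\Om)$ is unconstrained and the control on $\|\psi\|_{\rmL^2}$ must come partly from the boundary term. The main obstacle is the weighted Poincar\'e-type inequality $\|\psi\|_{\rmL^2}^2\leq C\big(\|\nabla\psi\|_{\rmL^2}^2+\int_{\Gamma_\rmR}\omega|\psi|^2\dd a\big)$ on $\rmH^1(\Om)$, which I would establish by a compactness--contradiction argument. Negating it produces a sequence $(\psi_n)\subset\rmH^1(\Om)$ with $\|\psi_n\|_{\rmH^1}=1$ while $\|\nabla\psi_n\|_{\rmL^2}^2+\int_{\Gamma_\rmR}\omega|\psi_n|^2\dd a\to0$. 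By reflexivity and the Rellich--Kondrachov compact embedding $\rmH^1(\Om)\hookrightarrow\rmL^2(\Om)$ on the bounded Lipschitz domain $\Om$, a subsequence converges weakly in $\rmH^1$ and strongly in $\rmL^2$ to some $\psi$; weak lower semicontinuity forces $\nabla\psi=0$, so $\psi$ is constant, and continuity of the trace yields $\int_{\Gamma_\rmR}\omega|\psi|^2\dd a=0$, whence $\psi\equiv0$ because $\omega\geq0$ with $\omega\not\equiv0$. But $\|\nabla\psi_n\|_{\rmL^2}\to0$ and $\|\psi_n\|_{\rmH^1}=1$ give $\|\psi_n\|_{\rmL^2}\to1$, so strong $\rmL^2$-convergence forces $\|\psi\|_{\rmL^2}=1$, a contradiction. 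This secures coercivity in the third case as well, and the Lax--Milgram theorem then delivers the bounded inverse $L^{-1}:\HH^*\to\HH$, completing the proof.
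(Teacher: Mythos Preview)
Your proof is correct and follows the same route as the paper: invoke Lax--Milgram after verifying boundedness and coercivity of the bilinear form on $\HH$. The paper's proof is terser, merely stating that coercivity ``can be shown by a contradiction argument'' without distinguishing the three cases; you have spelled out the details, using direct Poincar\'e-type inequalities for the constrained cases and reserving the compactness--contradiction argument for the pure Robin case, which is exactly how one would naturally fill in the paper's sketch.
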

\begin{proof}
    This follows from the classical Riesz representation/Lax--Milgram theorem 
    by observing that the definition of the space $\HH$ ensures that the 
    continuous bilinear form $\la L\cdot,\cdot\ra$ is coercive. 
    More precisely, there exists $\alpha_0>0$ such that for all $\HH$, we have  
    \begin{align}\label{eq:Lcoerc}
    \la L\psi,\psi\ra\ge\alpha_0\|\psi\|_{\rmH^1(\Om)}^2.
    \end{align}
    The existence of such a constant $\alpha_0>0$ can be shown by a contradiction argument.
\end{proof}
As a consequence of Lemma~\ref{lemma:poisson.gen}, whenever $\bfq{\cdot} \bfc\in \HH^*$, 
the linear Poisson problem~\eqref{eq:poisson-parts.1} for the internal potential possesses 
a unique weak solution $\psi_\bfc\in \HH$, i.e., there exists a unique $\psi_\bfc\in \HH$ 
such that $\la L\psi_\bfc,\vp\ra =\la \bfq{\cdot}\bfc,\vp\ra$ for all $\vp\in \HH$.

The external potential $\psi_\mathrm{ext}$ will, for simplicity, 
be considered in a more regular setting.  
\begin{enumerate}[label={\rm(D\arabic*)},resume]
	\item\label{hp:data} $D \in \rmL^{p_\Omega}(\Omega)$ and 
    $\grmR \in \rmL^{p_\rmR}(\Gamma_\rmR)$ for certain $p_\Omega > \max\{1,d/2\}$,  
    $p_\rmR > \max\{1,d-1\}$. If $\Gamma_\rmD = \emptyset$ and $\omega \equiv 0$, 
    then let further $\int_\Omega D \dd x + \int_{\pl \Omega} \grmR \dd a = 0$ hold. 
\end{enumerate}

\begin{lemma}[Poisson: regularity]
	\label{lemma:poisson}Suppose \ref{hp:eps}--\ref{hp:data}.
	Then, the Poisson equation \eqref{eq:poisson-parts.2} has a 
    unique solution $\psi_\mathrm{ext} \in \HH \cap \rmC(\ol \Omega)$. 
	\end{lemma}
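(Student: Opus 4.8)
The plan is to obtain the weak solution by Lax--Milgram and then upgrade its regularity via De Giorgi--Nash--Moser theory, since the coefficient $\eps$ is only $\rmL^\infty$ and Schauder estimates are unavailable. First I would recast \eqref{eq:poisson-parts.2} in weak form: seek $\psiext \in \HH$ with
\[
  \la L\psiext, \vp\ra = \int_\Om D\,\vp\dd x + \int_{\Gamma_\rmR}\grmR\,\vp\dd a \qquad \text{for all } \vp \in \HH,
\]
where $L$ is the invertible operator from Lemma~\ref{lemma:poisson.gen}. To apply that lemma I must verify that the right-hand side is a bounded linear functional on $\HH\subseteq\rmH^1(\Om)$. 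The volume term is controlled by H\"older's inequality together with the Sobolev embedding $\rmH^1(\Om)\hookrightarrow\rmL^q(\Om)$, and the assumption $p_\Om>\max\{1,d/2\}$ in \ref{hp:data} guarantees that $D\in\rmL^{p_\Om}$ pairs continuously with $\rmH^1$; the boundary term is handled analogously using the trace embedding $\rmH^1(\Om)\hookrightarrow\rmL^r(\Gamma_\rmR)$ and $p_\rmR>\max\{1,d-1\}$. Invertibility of $L$ then yields a unique $\psiext\in\HH$. In the pure-Neumann case the compatibility condition in \ref{hp:data} is precisely what makes this functional vanish on constants, hence well defined on the mean-zero space $\HH$.

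The second and main step is to upgrade $\psiext$ to a continuous function. I would begin by establishing an $\rmL^\infty$ bound through Stampacchia's truncation method: testing the weak formulation with the truncations $(\psiext-k)_+$ (and their negative analogues) and iterating over the super-level sets. The exponent thresholds $p_\Om>d/2$ and $p_\rmR>d-1$ are exactly the borderline conditions under which this De Giorgi iteration closes, producing $\psiext\in\rmL^\infty(\Om)$.

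With boundedness in hand I would derive H\"older continuity, which gives continuity. Interior H\"older continuity is the classical De Giorgi--Nash result for uniformly elliptic divergence-form operators with bounded measurable coefficients. Continuity up to $\Gamma_\rmR$ follows from the same local Caccioppoli/De Giorgi estimates adapted to the natural (Robin/Neumann) boundary condition on boundary half-balls, again using $\grmR\in\rmL^{p_\rmR}$ with $p_\rmR>d-1$. Continuity up to $\Gamma_\rmD$ exploits crucially that the Dirichlet data vanish: it suffices to show $\psiext(x)\to 0$ as $x\to x_0\in\Gamma_\rmD$, which is furnished by local barriers at regular boundary points, available here because the Lipschitz domain $\Om$ satisfies an exterior-cone (Wiener) condition at every boundary point. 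Patching interior continuity with continuity at all boundary points and invoking compactness of $\ol\Om$ yields $\psiext\in\rmC(\ol\Om)$.

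The hard part will be the boundary behavior, specifically continuity at Dirichlet points of a merely Lipschitz domain and across the (measurable) interface $\ol{\Gamma_\rmD}\cap\ol{\Gamma_\rmR}$ between the two boundary conditions. Since we only require continuity rather than a quantitative H\"older exponent, and since the Dirichlet data are homogeneous, the comparison and barrier arguments are robust enough to survive the mixed setting; I would therefore quote the standard De Giorgi--Nash--Moser and Stampacchia boundary regularity results for mixed boundary value problems rather than reproduce their proofs.
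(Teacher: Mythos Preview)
Your proposal is correct and follows essentially the same route as the paper, which simply cites \cite[Chap.~4]{Trol10OCPD} (Tr\"oltzsch) for this standard result and remarks that the arguments there, stated for the pure Robin case, carry over to mixed Dirichlet--Robin data. The Lax--Milgram existence step together with Stampacchia truncation for $\rmL^\infty$ bounds and De~Giorgi--Nash--Moser for continuity that you outline is precisely the content of that reference; the paper does not reproduce these details either and, like you, relies on the homogeneous Dirichlet data and the Lipschitz regularity of $\Omega$ to absorb the mixed-boundary subtleties into the cited theory.
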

\begin{proof}
	The assertions follow from the presentation in \cite[Chap.\,4]{Trol10OCPD}, which is 
	carried out in the pure Robin case but which also applies to our 
	situation with mixed boundary conditions. 
\end{proof}

\paragraph{Electrostatic energy.} 
We define the bilinear form $\bbB : \HH \times \HH 
\rightarrow \bbR$, 
\begin{align}
\label{eq:def-b-func}
\bbB(\psi,\phi)\coleq \int_\Omega \eps\nabla \psi \cdot \nabla \phi \dd x
+ \int_{\Gamma_\rmR} \omega \psi \phi \dd a
\qquad \text{and set}\quad  \calB(\psi)\coleq \bbB (\psi,\psi). 
\end{align}
Note that $\bbB : \HH \times \HH \rightarrow \bbR$ 
is coercive and related to the linear operator $L:\HH\to \HH^*$ via 
\[
\bbB(\psi,\phi) = \langle L\psi,\phi\rangle_\HH,
\]
where $\langle \,\cdot\,,\,\cdot\,\rangle_\HH$ is the duality pairing in
$\HH^*\ti \HH$. For a given charge distribution
$\rho \in \rmL^1(\Omega)\cap \HH^*$, the internal potential
$\psi_\rho$ is the solution of the weak problem
\begin{align}
\label{eq:weak-poisson}
\bbB(\psi_\rho,\phi) = \int_\Omega \rho\phi\dd x =\langle \rho,\phi\rangle_\HH 
\quad \text{ for all } \phi\in \HH.
\end{align}
The electrostatic energy for the data $(\rho,\psiext)$ is then introduced as 
\begin{equation}
  \label{eq:E.rho.psiext}
  \rmE(\rho,\psiext) \coleq \frac12\calB(\psi_\rho{+} \psiext) 
  = \frac12\calB(\psi_\rho) + \langle \rho, \psiext \rangle_\HH + 
  \frac12\calB(\psiext) \geq 0. 
\end{equation}
	The first two terms of the total electrostatic energy \eqref{eq:E.rho.psiext} account 
	for the energy of the charge density $\rho$ in the 
	self-consistent potential 
    $\psi_\rho = L^{-1}\rho$ 
    and the $\rho$-independent 
	potential $\psi_\mathrm{ext}$, where $\psi_{\rho}$ and $\psi_\mathrm{ext}$ 
	are introduced in \eqref{eq:weak-poisson} and \eqref{eq:poisson-parts.2}. 
    The last term in \eqref{eq:E.rho.psiext} can be interpreted as the ``energy'' 
    of the external potential $\psiext$. 
	Specifying $\rho \coleq \bfq {\cdot} \bfc$ and adding the integral over $u$, 
    i.e., the total internal energy, leads to the total energy $\calE(\bfc, u)$. 
    The energy and charge functionals are thus defined as 
	\begin{subequations}
		\label{eq:constraints}
		\begin{align}
			\calE(\bfc, u) &\coleq \frac12\calB(\psi_{\bfc} {+} \psiext) + \int_\Omega u \dd x = E_0, 
			\label{eq:constraint-energy} \\
			\calQ(\bfc, u) &\coleq \int_\Omega \bfq {\cdot} \bfc \, \dd x = Q_0,  
			\label{eq:constraint-charge} 
		\end{align}
	\end{subequations}
    where we recall the identities $\psi_\bfc = \psi_{\bfq{\cdot}\bfc} = L^{-1}(\bfq{\cdot}\bfc)$ 
    according to \eqref{eq:poisson-parts.1}, \eqref{eq:weak-poisson}, and \eqref{eq:def-l}.
	The total charge $\calQ(\bfc, u)$ only depends on the dynamic charge 
	density $\bfq {\cdot} \bfc$. If there are further stoichiometric constraints, 
    additional conservation laws arise, which have to be included as well; 
    for the sake of a simplified presentation, we shall assume that no 
	further conservation laws are present. 

Invoking the homogeneous Dirichlet condition $\psiext = 0$ on $\Gamma_\rmD$, 
the Fr\'echet differentials of $\calE(\bfc,u)$ and $\calQ(\bfc,u)$ are easily seen to equal 
	\begin{align}
		\label{eq:derivatives-energy-charge}
		\rmD\calE(\bfc,u) = \binom{(\psi_{\bfc} {+} \psi_\mathrm{ext}) \bfq}{1} 
		\quad \text{and} \quad 
		\rmD\calQ(\bfc,u) = \binom{\bfq}{0}.
	\end{align}

\paragraph{Optimal charge distribution.}
In this part, we derive a sharp 
lower bound for the electrostatic energy
$\rmE(\rho,\psiext)$ depending on the given external field $\psiext$ 
and the total charge $Q_0$ as introduced in \eqref{eq:E.rho.psiext}. 

For finding the infimum of $\rmE(\rho,\psiext)$ under the constraint
$\int_\Omega \rho\dd x = Q_0$, it is helpful to relax the problem by admitting
all $\rho \in \HH^*$. However, the latter charge constraint is only well defined
on $\HH^*$ if the constant function $1_\Omega$ lies in $\HH$, which is neither the case
if we have a (homogeneous) Dirichlet boundary condition on a nontrival part
$\Gamma_\rmD$ nor in the case of pure Neumann boundary conditions. 
But we will see that the charge constraint is only relevant in the remaining 
pure Robin case. The following lemma will turn out to be useful subsequently. 
For later usage, we provide a stronger version of the density statement. 

\begin{lemma}
\label{lemma:l1-density}
Let $\HH$ be the space introduced in 
Definition \ref{def:h1-gammad}. Then, the following facts hold true. 
\begin{enumerate}[label={\rm(F\arabic*)}]
\item
$\rmC(\ol \Omega)$ is dense in $\HH^*$, 
in particular, $\rmL^1(\Omega) \cap \HH^*$ is dense in $\HH^*$. 
  \label{Fact1}
\item
If $\calH^{d-1}(\Gamma_\rmD)>0$, then there exists 
a sequence $\rho_n \in \rmL^1(\Omega)\cap \HH^*$ with $\int_\Omega \rho_n \dd x = 1$ 
for all $n\in \N$ and $\| \rho_n\|_{\HH^*}\to 0$ for $n \to \infty  $.  
  \label{Fact2}
\end{enumerate}
\end{lemma}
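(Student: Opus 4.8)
The plan is to treat the two assertions separately, deriving \ref{Fact2} from \ref{Fact1} together with the observation that the constant function fails to lie in $\HH$. For \ref{Fact1} I would exploit the Gelfand-triple structure $\HH\hookrightarrow\rmL^2(\Omega)\hookrightarrow\HH^*$. The inclusion $j:\HH\to\rmL^2(\Omega)$ is bounded (since $\HH\subseteq\rmH^1(\Omega)$) and injective, and its adjoint is exactly the canonical embedding $\rmL^2(\Omega)\hookrightarrow\HH^*$, $f\mapsto\big[\phi\mapsto\int_\Omega f\phi\dd x\big]$, which is continuous because $\big|\int_\Omega f\phi\dd x\big|\le\|f\|_{\rmL^2(\Omega)}\|\phi\|_{\rmH^1(\Omega)}$. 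The first step is to show that $\rmL^2(\Omega)$ is dense in $\HH^*$: using that $\HH$ is reflexive, every $\Lambda\in(\HH^*)^*$ is represented by some $\phi_0\in\HH$ via $\Lambda(\rho)=\langle\rho,\phi_0\rangle$; if $\Lambda$ annihilates the image of $\rmL^2(\Omega)$, then $\int_\Omega f\phi_0\dd x=0$ for all $f\in\rmL^2(\Omega)$, whence $\phi_0=0$ and $\Lambda=0$, so by Hahn--Banach the image of $\rmL^2(\Omega)$ is dense. The second step then finishes \ref{Fact1}: since $\rmC(\ol\Omega)$ is dense in $\rmL^2(\Omega)$ in the $\rmL^2$-norm and the embedding $\rmL^2(\Omega)\hookrightarrow\HH^*$ is continuous, $\rmC(\ol\Omega)$ is dense in $\HH^*$; as every $f\in\rmC(\ol\Omega)$ is bounded on the compact set $\ol\Omega$, it lies in $\rmL^1(\Omega)\cap\HH^*$, so $\rmL^1(\Omega)\cap\HH^*$ is dense as well.

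For \ref{Fact2} I would argue by contradiction, reducing the claim to the (non)continuity of the total-charge functional. Setting $\Lambda_0(\rho)\coleq\int_\Omega\rho\dd x$ on $\rmL^1(\Omega)\cap\HH^*$, the desired sequence exists precisely when $\Lambda_0$ is unbounded with respect to $\|\cdot\|_{\HH^*}$: unboundedness yields $\tilde\rho_n$ with $\|\tilde\rho_n\|_{\HH^*}=1$ and $\big|\int_\Omega\tilde\rho_n\dd x\big|\to\infty$, and then $\rho_n\coleq\tilde\rho_n/\!\int_\Omega\tilde\rho_n\dd x$ satisfies $\int_\Omega\rho_n\dd x=1$ and $\|\rho_n\|_{\HH^*}\to0$. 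Suppose instead $\Lambda_0$ were bounded; by \ref{Fact1} it would extend to a bounded functional on all of $\HH^*$, hence, by reflexivity of $\HH$, be represented by some $\phi_0\in\HH$ with $\int_\Omega\rho\dd x=\langle\rho,\phi_0\rangle$ for every $\rho\in\HH^*$. Testing with $\rho\in\rmL^2(\Omega)\subseteq\rmL^1(\Omega)\cap\HH^*$ gives $\int_\Omega\rho(1-\phi_0)\dd x=0$ for all such $\rho$, so $\phi_0=1$ a.e.\ in $\Omega$. But when $\calH^{d-1}(\Gamma_\rmD)>0$ the constant function $1$ has nonzero trace on $\Gamma_\rmD$ and therefore does not belong to $\HH$, a contradiction.

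The density of $\rmC(\ol\Omega)$ in $\rmL^2$ and the Gelfand-triple bookkeeping are standard; the conceptual crux is recognizing that \ref{Fact2} is equivalent to $1\notin\HH$, i.e.\ that the pure-Robin case (where $1\in\HH$) is exactly the one in which the charge constraint stays active. I expect the only point requiring care to be the reflexivity-based identification of $(\HH^*)^*$ with $\HH$ and the consistency of the two pairings -- the $\rmL^2$ pairing versus the $\HH^*$--$\HH$ duality -- on the overlap $\rmL^2(\Omega)\subseteq\HH^*$; once these are pinned down, both parts follow without any geometric input about $\Gamma_\rmD$ beyond its positive surface measure.
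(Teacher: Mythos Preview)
Your proof is correct but takes a genuinely different route from the paper, especially for \ref{Fact2}. For \ref{Fact1} the two arguments are close in spirit: the paper routes through $\rmH^1(\Omega)^*$ using the adjoint of the orthogonal projection $P:\rmH^1(\Omega)\to\HH$ together with the density of $\rmC(\ol\Omega)$ in $\rmH^1(\Omega)^*$, while you route through $\rmL^2(\Omega)$ via the Gelfand triple and a Hahn--Banach/reflexivity annihilator argument. (A minor terminological point: in the pure Neumann case $\HH=\{\psi\in\rmH^1:\int_\Omega\psi=0\}$ the map $\rmL^2(\Omega)\to\HH^*$ has the constants in its kernel, so calling it an ``embedding'' is imprecise; this does not affect your density argument, which only needs the image to be dense.) For \ref{Fact2} the paper gives an explicit construction: it uses the Lipschitz regularity of $\partial\Omega$ to locally flatten the boundary near $\Gamma_\rmD$, sets $\rho_n$ equal to a normalized indicator of a strip $A_n$ of thickness $\delta/n$ adjacent to $\Gamma_\rmD$, and then obtains $\|\rho_n\|_{\HH^*}\lesssim n^{-1/2}$ from a one-dimensional Poincar\'e-type estimate along the normal direction (exploiting that $\psi\in\HH$ vanishes on $\Gamma_\rmD$). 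Your abstract argument---reducing \ref{Fact2} to the single observation $1_\Omega\notin\HH$---is shorter and uses no geometry beyond $\calH^{d-1}(\Gamma_\rmD)>0$, but is non-constructive. The paper's explicit sequence is what supplies the formula~\eqref{eq:def-rhon-an} and the physical picture of ``charge accumulating near the Dirichlet boundary'' mentioned right after the lemma; for the subsequent analysis (Proposition~\ref{pr:MinElectroEner}) only the bare existence statement \ref{Fact2} is actually invoked, so your approach would suffice there.
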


A physical interpretation of the sequence $\rho_n$ in \ref{Fact2} is the accumulation of all
charge near the Dirichlet boundary. The point is that the relaxation allows for
surface charges, whereas the original problem is restricted to volume charges. 
More precisely, in the proof of \ref{Fact2}, we will show that the sequence can be chosen such that 
\begin{align}
\label{eq:def-rhon-an}
\rho_n(x) \coleq \frac{1}{|A_n|} \, \bm1_{A_n}(x) \ \text{ with } \ 
|A_n|= \frac{C}{n} \ \text{ and } \ \mafo{dist}(A_n, \Gamma_\rmD  ) \leq \frac1n. 
\end{align}

Detailed proofs of Lemma~\ref{lemma:l1-density} and Proposition~\ref{pr:MinElectroEner} below are given in Section~\ref{se:proofs-electro-energy}.

\begin{proposition}[Minimal electrostatic energy]
\label{pr:MinElectroEner} For $\rmE$ given by \eqref{eq:E.rho.psiext}, we
define on $\R\ti \HH$ the functional 
\begin{align}
    \label{eq:def-rmV}
\rmV(Q_0,\psiext) \coleq \inf\Bigset{ \rmE(\rho,\psiext)}{ 
\rho \in \rmL^1(\Omega)\cap \HH^*,\ \int_\Omega \rho \dd x = Q_0 } .
\end{align}
\\[0.2em]
(A) (Some Dirichlet) For $\calH^{d-1}(\Gamma_\rmD)>0$, we have $\rmV(Q_0,\psiext)=0$.
\\[0.2em]
(B) (Pure Neumann) For $\Gamma_\rmR = \pl\Omega$ and $\omega \equiv 0$, we have
$\rmV(0,\psiext) = 0 $. 
\\[0.2em]
(C) (Pure Robin) For $\Gamma_\rmR = \pl\Omega$ and $\int_\Gamma\omega \dd a>0$, we have
\begin{equation}
  \label{eq:V.Q.psiext}
  \rmV(Q_0,\psiext) = \frac1{2\int_\Gamma \omega \dd a} \Big( Q_0 + \int_\Gamma
  \omega \psiext \dd a\Big)^2.
\end{equation}

Denote by $N:\rmL^2(\Omega)\ti \rmL^2(\pl\Omega) \to \HH^*$ 
the mapping defined via 
\[
\big\langle N(D,\grmR), \phi\big\rangle_\HH 
\coleq \int_\Omega D\phi \dd x + \int_{\pl\Omega} \grmR \phi \dd a \quad 
\text{for all } \phi\in \HH
\]
and let $M\coleq L^{-1} N: \rmL^2(\Omega)\ti \rmL^2(\pl\Omega) \to \HH$. Then, for
$\psiext \in \HH$ and $\psiext= M(D,\grmR)$, respectively, the minimum is
attained at the relaxed $\wt\rho_*\in \HH^*$ given as follows:
\\[0.2em]
(A) (Some Dirichlet) $\wt\rho_*=-L\psiext$ and $\wt\rho_*=N({-}D,{-}\grmR)$,
respectively.
\\[0.2em]
(B) (Pure Neumann) $\wt\rho_*=-L\psiext$ and $\wt\rho_*=N({-}D,{-}\grmR)$,
respectively.
\\[0.2em]
(C) (Pure Robin) $\wt\rho_*=L(\kappa_*{-}\psiext)$ and
$\wt\rho_*=N({-}D,\kappa_*\omega{-}\grmR)$, respectively, \\
\hspace*{2em} where
$\kappa_*= \big( Q_0 + \int_\Gamma \omega \psiext \dd a\big) / 
\int_\Gamma \omega \dd a$.
\end{proposition}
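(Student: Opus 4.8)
The plan is to relax the admissible set to all of $\HH^*$ and to exploit that, by Lemma~\ref{lemma:poisson.gen}, the solution map $\rho\mapsto\psi_\rho=L^{-1}\rho$ is a linear isomorphism $\HH^*\to\HH$. Writing $\rmE(\rho,\psiext)=\tfrac12\calB(\psi_\rho+\psiext)$ as in \eqref{eq:E.rho.psiext} and recalling $\psiext\in\HH$, the relaxed minimisation of $\rmE$ over $\rho\in\HH^*$ becomes the minimisation of the coercive quadratic form $\chi\mapsto\tfrac12\calB(\chi)$ over $\chi=\psi_\rho+\psiext\in\HH$, subject to the transformed charge constraint. Two observations are used throughout: $\rmE\geq0$, so $\rmV(Q_0,\psiext)\geq0$ in every case; and $\rho\mapsto\rmE(\rho,\psiext)$ is continuous on $\HH^*$, its quadratic part being $\tfrac12\la\rho,L^{-1}\rho\ra_\HH$ and its cross term $\la\rho,\psiext\ra_\HH$ linear, so $\rmE$ depends only on the $\HH^*$-class of $\rho$.

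In cases (A) and (B) the constant $1_\Omega$ does not belong to $\HH$, hence the charge functional does not extend continuously to $\HH^*$ and the relaxed problem is effectively unconstrained. By coercivity, $\tfrac12\calB(\chi)$ is minimised on $\HH$ only at $\chi=0$, giving $\psi_{\wt\rho_*}=-\psiext$, $\wt\rho_*=-L\psiext$ and $\rmE(\wt\rho_*,\psiext)=0$; since $\psiext=M(D,\grmR)$ means $L\psiext=N(D,\grmR)$, this reads $\wt\rho_*=N(-D,-\grmR)$. To realise $0$ as the infimum over $\rmL^1(\Omega)\cap\HH^*$ under the charge constraint, I would approximate $\wt\rho_*$ in $\HH^*$ by $\sigma_n\in\rmL^1(\Omega)\cap\HH^*$ using the density statement \ref{Fact1}. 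In case (B) one enforces $\int_\Omega\rho_n\dd x=0$ by subtracting the mean of $\sigma_n$; as constants vanish in $\HH^*$, this leaves the $\HH^*$-class unchanged, so $\rmE(\rho_n,\psiext)\to0$ and $\rmV(0,\psiext)=0$. In case (A) one instead corrects the total charge to $Q_0$ by adding $\big(Q_0-\int_\Omega\sigma_n\dd x\big)\rho_{m(n)}$, where $\rho_m$ is the concentrating sequence \ref{Fact2} with $\int_\Omega\rho_m\dd x=1$ and $\|\rho_m\|_{\HH^*}\to0$; choosing $m(n)$ large (after $\sigma_n$ is fixed) keeps the correction $\HH^*$-small while pinning the charge, whence $\rmE\to0$ and $\rmV(Q_0,\psiext)=0$.

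In case (C) we have $\HH=\rmH^1(\Omega)$, so $1_\Omega\in\HH$ and the constraint is the $\HH^*$-continuous condition $\la\rho,1_\Omega\ra_\HH=Q_0$. Using $\bbB(\chi,1_\Omega)=\int_\Gamma\omega\chi\dd a$, it transforms under $\chi=\psi_\rho+\psiext$ into $\int_\Gamma\omega\chi\dd a=Q_0+\int_\Gamma\omega\psiext\dd a=\kappa_*\int_\Gamma\omega\dd a$, with $\kappa_*$ as in the statement. I would test the constant profile $\chi_*\equiv\kappa_*$: it is admissible, and $\bbB(\kappa_*,\xi)=\kappa_*\int_\Gamma\omega\xi\dd a$ for all $\xi\in\rmH^1(\Omega)$ shows that it solves the Euler--Lagrange identity with multiplier $\kappa_*$. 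Convexity then gives optimality: for admissible $\chi=\chi_*+\xi$ one has $\int_\Gamma\omega\xi\dd a=0$, hence $\tfrac12\calB(\chi)=\tfrac12\calB(\chi_*)+\kappa_*\int_\Gamma\omega\xi\dd a+\tfrac12\calB(\xi)=\tfrac12\calB(\chi_*)+\tfrac12\calB(\xi)\geq\tfrac12\calB(\chi_*)=\tfrac12\kappa_*^2\int_\Gamma\omega\dd a$, which is exactly \eqref{eq:V.Q.psiext}. Undoing the substitution yields $\wt\rho_*=L(\kappa_*-\psiext)$, and combining $L\psiext=N(D,\grmR)$ with $L\kappa_*=N(0,\kappa_*\omega)$ gives $\wt\rho_*=N(-D,\kappa_*\omega-\grmR)$. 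Finally, the density \ref{Fact1} together with a correction by a fixed $\zeta\in\rmL^1(\Omega)\cap\HH^*$ with $\la\zeta,1_\Omega\ra_\HH=1$ shows that the relaxed value equals the infimum over $\rmL^1(\Omega)\cap\HH^*$ (the larger feasible set of the relaxed problem gives one inequality, the approximation the other).

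The step I expect to be most delicate is the passage from the relaxed problem on $\HH^*$ back to the genuine problem on $\rmL^1(\Omega)\cap\HH^*$ with the charge constraint met exactly. This is routine in case (C), where the charge depends continuously on $\rho$, but subtle in case (A): there the total charge is \emph{discontinuous} in the $\HH^*$-topology, and the concentrating sequence \ref{Fact2}---charge escaping toward $\Gamma_\rmD$ at asymptotically vanishing electrostatic cost---is precisely what makes every prescribed $Q_0$ compatible with energy tending to $0$. By contrast, the optimality of the constant profile in case (C) is comparatively routine once convexity is invoked.
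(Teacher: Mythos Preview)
Your proof is correct and follows the same overall strategy as the paper: relax to $\HH^*$, identify the minimiser of $\chi\mapsto\tfrac12\calB(\chi)$ via the isomorphism $L^{-1}$, and then recover the original infimum over $\rmL^1(\Omega)\cap\HH^*$ by the density and charge-correction arguments based on \ref{Fact1} and \ref{Fact2}. The treatment of cases (A) and (B), including your diagonal choice of $m(n)$ after $\sigma_n$ is fixed, matches the paper's reasoning essentially step for step.

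The one genuine difference is in case (C). The paper introduces a Lagrange function $\calL(\rho,\psi,\kappa)$ on $\HH^*\times\HH\times\R$ and invokes saddle-point theory (citing \cite[Chap.~VI]{EkeTem76CAVP}) to interchange inf and sup before arriving at $\kappa_*$ and the optimal value. Your route is more direct: after the substitution $\chi=\psi_\rho+\psiext$ you recognise the constrained problem as minimising $\tfrac12\calB(\chi)$ over the affine hyperplane $\bbB(\chi,1_\Omega)=Q_0+\int_\Gamma\omega\psiext\dd a$, guess the constant $\chi_*\equiv\kappa_*$, and verify optimality by the one-line orthogonality computation $\bbB(\kappa_*,\xi)=\kappa_*\int_\Gamma\omega\xi\dd a=0$ for admissible perturbations $\xi$. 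This is cleaner and avoids the abstract duality machinery; the paper's approach, on the other hand, makes the role of $\kappa_*$ as a genuine Lagrange multiplier for the charge constraint explicit, which foreshadows the Lagrangian analysis of Section~\ref{se:proofs-LagrangianApproach}.
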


\begin{remark}[Surface charges and attainment]
\label{rem:Attainment}
The question of attainment of the infimum in the definition of $\rmV(Q_0,\psiext)$
by a $\rho_\mafo{att}\in \rmL^1(\Omega) \cap \HH^*$ can be read off from the last
attainment statements.

We have attainment if and only if $\wt\rho_*=N(\rho_\mafo{att},0)$ and
$\int_\Omega \rho_\mafo{att} \dd x = Q_0$. Hence, in the three cases we have 
attainment if and only if:
\\[0.2em]
(A) (Some Dirichlet) $\grmR\equiv 0$ and $\int_\Omega D \dd x=-Q_0$.
\\[0.2em]
(B) (Pure Neumann) $\grmR\equiv 0$ and $\int_\Omega D \dd x=0=Q_0$.
\\[0.2em]
(C) (Pure Robin) $\int_{\pl\Omega}\omega \dd a \:\grmR = \big(Q_0 +
\int_{\pl\Omega} \omega \psiext \dd a \big) \omega$. \medskip

In all three cases, we have the necessary attainment condition that there are
no surface charges, i.e., $\wt\rho_*=N(-D,0)$. In (A), there is the additional
condition that the total doping charge compensates the given charge. 
\end{remark}

\begin{remark}[Restriction to positive charge]
\label{rem:rmV-geq}
    We emphasize that the statements of Proposition \ref{pr:MinElectroEner} 
    and Remark \ref{rem:Attainment} on the minimal electrostatic energy heavily rely on 
    $\rho = \bfq {\cdot} \bfc$ being allowed to take positive \emph{and} negative values. 
    In other words, we suppose that positive \emph{and} negative charge carriers are present, 
    i.e., there exist $1 \leq i, j \leq I$ such that $q_i < 0 < q_j$.
    
    If all species are non-negatively charged, i.e., if $q_i \geq 0$ for all $1 \leq i \leq I$, 
    we have to replace $\rmL^1(\Omega)$ in \eqref{eq:def-rmV} by $\rmL^1_+(\Omega)$, 
    and we conjecture that 
    \begin{align*} 
        \rmV_\geq (Q_0, \psiext) 
        \coleq \inf\Bigset{ \rmE(\rho,\psiext)}
        { \rho \in \rmL^1_+(\Omega) \cap \HH^*,\ \int_\Omega \rho \dd x = Q_0 }
        \gneqq \rmV(Q_0, \psiext).
    \end{align*}
\end{remark}

\section{Main results}
\label{se:results}
    \subsection{Local optimization via the Lagrange multiplier method}
    \label{su:LagrangianApproach}

    Before stating our main results on the existence, uniqueness, and 
    regularity of a (local) maximizer of the total entropy $\calS$ subject to 
    \eqref{eq:constraints} in Theorem \ref{thm:existence-uniqueness} 
    and Corollary \ref{cor:equilibrium}, 
    we introduce the underlying hypotheses and the precise definitions 
    of local equilibria and critical points.     

    The following conditions are the main properties, which are supposed to hold for 
    the entropy function $S(\bfc, u)$ and the dual entropy density $H^\ast(\bfy, v)$. 
    Here and throughout the article, we use $\rmD$ to denote the Gateaux 
    resp.\ Fr\'echet derivative.

    \begin{hypotheses}
    \label{hypo:entropy}
    Assume that the entropy density 
	$S : (0, \infty)^{I+1} \rightarrow \bbR$ and the dual entropy density 
	$H^\ast : \bbR^I \times (-\infty, 0) \rightarrow \bbR$ defined in 
    \eqref{eq:def-dual-entropy} fulfil the subsequent requirements:
    \begin{enumerate}
        \item $-S$ and $H^\ast$ are continuously differentiable and strictly convex functions. 
        \item $-\rmD S : (0, \infty)^{I+1} \rightarrow \bbR^I \times (-\infty, 0)$ 
        is a homeomorphism. 
    \end{enumerate}
    \end{hypotheses}
	Under Hypotheses \ref{hypo:entropy}, $-\rmD S$ generates a 
	transformation between the primal variables 
    $(\bfc, u) = \rmD H^\ast(\bfy, v) \in (0, \infty)^{I+1}$ 
	and the dual variables 
    \begin{align}
    \label{eq:dual-variables}
        (\bfy, v) = -\rmD S(\bfc, u) \in \bbR^I \times (-\infty, 0), 
    \end{align}
    where $-\rmD S(\bfc, u)$ and $\rmD H^\ast(\bfy, v)$ are strictly monotone in each argument. 
	The fact that $\rmD H^\ast$ is the inverse function to $-\rmD S = \rmD H$ is a 
    consequence of the Fenchel equivalences known from convex analysis. 
    The components of $\bfy = (y_1, \dotsc, y_I)$ 
	are related to the \emph{chemical potentials} $\zeta_i$ via 
    $y_i = - \zeta_i /\theta$ (cf.\ \cite[Def.~3.1]{AlGaHu02TDEM}), 
	whereas $v = -\rmD_u S(\bfc, u) < 0$ is the negative \emph{inverse temperature}. 
	Obviously, $S(\bfc, \cdot)$ is strictly increasing for all $\bfc \in (0, \infty)^I$. 
    Note that the class of entropy densities in Example \ref{ex:entropies} 
    satisfies the assumptions in Hypotheses \ref{hypo:entropy}. 

    The following hypotheses collect assumptions on the prescribed total energy 
    $E_0 \geq 0$ and total charge $Q_0 \in \bbR$ according to necessary 
    constraints stated in Proposition \ref{pr:MinElectroEner}. 

    \begin{hypotheses}
    \label{hypo:bc}
        Let $Q_0 \in \bbR$ and $E_0 > \rmV(Q_0, \psiext)$ 
        as defined in Proposition \ref{pr:MinElectroEner}. 
        If $\Gamma_\rmD = \emptyset$ and $\omega \equiv 0$, 
        i.e., in the pure Neumann case, we additionally set $Q_0 \coleq 0$. 
    \end{hypotheses}

    In the present context of the approach with Lagrange multipliers, 
    we consider continuous and uniformly positive concentrations 
    and internal energy densities 
    \begin{align}
		\label{eq:function-space}
        (\bfc, u) \in \FF \coleq \rmC \big(\ol \Omega, (0, \infty)^{I+1} \big). 
	\end{align}
    The main reason for this choice is that it ensures Fr\'echet differentiability 
    of the entropy functional $\calS=\calS(\bfc, u)$.
    At the same time, as we will show in Theorem \ref{thm:existence-uniqueness}, 
    there exists a unique constrained critical point of $\calS$ in $\FF$. 
    Let us also note that the constraints \eqref{eq:constraints} are 
    Fr\'echet differentiable in $\FF$. 

    We refer to \cite[Lem.~6.1]{AlGaHu02TDEM} for the first mathematically precise 
	formulation of the following equilibrium principle. 
	
	\begin{definition}[Local/global equilibria, critical points]
		\label{def:equilibrium} 
        Let $(\bfc^\ast, u^\ast) \in \FF$ satisfy \eqref{eq:constraints}. 
		We call $(\bfc^\ast, u^\ast)$ a \emph{local equilibrium} of 
        \eqref{eq:general-system} compatible with total energy 
		$E_0 > 0$ and total charge $Q_0 \in \bbR$ if the entropy $\calS$ 
		attains its supremum in a $\rmC^0$ neighborhood of $(\bfc^\ast, u^\ast)$ 
        in $\FF$ under the constraints 
		\eqref{eq:constraints} at $(\bfc^\ast, u^\ast)$. More precisely, we demand 
        that a constant $\delta > 0$ exists such that 
		\begin{align*}
			\calS(\bfc^\ast, u^\ast) = \sup
            \big\{ \calS(\bfc, u) \ \big| \ 
			&\calE(\bfc, u) = E_0, \ \calQ(\bfc, u) = Q_0, \\ 
			&(\bfc, u) \in \FF, \ 
            \| (\bfc {-} \bfc^\ast, u {-} u^\ast) \|_{\rmC(\ol\Omega)} < \delta \big\}. 
		\end{align*}
        We call $(\bfc^\ast, u^\ast)$ a \emph{global equilibrium} of 
        \eqref{eq:general-system} compatible with total energy 
		$E_0 > 0$ and total charge $Q_0 \in \bbR$ if the entropy $\calS$ 
		attains its supremum in $\FF$ under the constraints 
		\eqref{eq:constraints} at $(\bfc^\ast, u^\ast)$. 
        More precisely, we demand that 
		\begin{align*}
			\calS(\bfc^\ast, u^\ast) = \sup
            \big\{ \calS(\bfc, u) \ \big| \ \calE(\bfc, u) = E_0, \ 
            \calQ(\bfc, u) = Q_0, \ (\bfc, u) \in \FF \big\}. 
		\end{align*}
        We say that $(\bfc^\ast, u^\ast)$ is a 
		\emph{critical point} of the total entropy $\calS$ subject to 
		\eqref{eq:constraints} if and only if 
        there exist constants $\eta, \kappa \in \bbR$ such that 
		\begin{align}\label{eq:def.critical.pt}
			\rmD \calS(\bfc^\ast, u^\ast) = \eta \rmD \calE(\bfc^\ast, u^\ast) 
			+ \kappa \rmD \calQ(\bfc^\ast, u^\ast). 
		\end{align}
	\end{definition}

Recall that $\calS (\bfc, u) \in \bbR$ is well-defined for all 
$(\bfc, u) \in \FF$. Besides, since the energy constraint is 
nonlinear (note the quadratic electrostatic energy term $|\nabla \psi_\bfc|^2$), 
it is not immediately clear whether a unique constrained 
local maximizer of the total entropy $\calS$ exists. 
Studying the large-time behavior of \eqref{eq:general-system} 
(which will be the subject of a future work), 
we are particularly interested in proving convergence 
into a \emph{unique} local equilibrium for time $t \to \infty$. 
Indeed, we will show that there even exists a \emph{unique} 
critical point $(\bfc^\ast, u^\ast) \in \FF$ 
of $\calS$ under the constraints \eqref{eq:constraints}.

\begin{remark}[Constant temperature and electrochemical potentials]
\label{re:ConstantPotentials}
In \cite[Thm.~5.2]{GliHun05SEMS}, thermodynamical equilibria are defined by 
the assumption of constant temperature and constant electrochemical potentials. 
This property is equivalent to our characterization in \eqref{eq:def.critical.pt}. 
Indeed using \eqref{eq:derivatives-energy-charge}, the last component of 
\eqref{eq:def.critical.pt} gives 
\[
\frac{1}{\theta} = \rmD_u S(\bfc,u)= \eta, \quad \text{namely } \theta \equiv \frac{1}{\eta},
\]
which means that the temperature is equal to the constant $\theta_0 \coleq 1/\eta$. 

The vector of electrochemical potentials $\bfzeta$ is defined as 
$\bfzeta \coleq \theta \rmD_\bfc S(\bfc,u) - \psi \bfq$, and \eqref{eq:def.critical.pt} yields 
\[
\bfzeta = \frac1\eta \;\! \rmD_\bfc S(\bfc,u) - \psi \bfq 
= \frac1\eta \big( \rmD_\bfc S(\bfc,u) - \eta \psi \bfq\big) \equiv \frac\kappa\eta \bfq .
\]
This means that all electrochemical potentials 
$\zeta_i=\theta \rmD_{c_i} S(\bfc,u) -q_i \psi$ are constant, 
and all constants are the same up to a multiplication with the charge numbers 
$q_i$, namely $\zeta_i=\zeta_1 q_i/q_1$. 
\end{remark}

\begin{proposition}[Critical point versus local maximizer]\label{prop:crit=equil}
The following holds true:
\begin{enumerate}
    \item Any local equilibrium solution of \eqref{eq:general-system} with respect to fixed total energy $E_0$ and total charge $Q_0$ is a constrained critical point of $\calS$. 
    \item If $S\in \rmC^2((0,\infty)^{I+1})$ is locally strongly concave in the sense that $-\rmD^2S(\bfc,u)$ is positive definite for all $(\bfc,u)\in (0,\infty)^{I+1}$, then any constrained critical point of $\calS$, i.e., any solution of~\eqref{eq:def.critical.pt}, is a strict local maximizer of $\calS$ with respect to the energy and charge constraints.
\end{enumerate}
\end{proposition}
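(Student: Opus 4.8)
The plan is to treat the two assertions separately: the first is the classical first-order necessary condition, while the second is a sufficiency statement that exploits the convex/concave structure of the problem.

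\emph{Part (1).} I would regard $\FF = \rmC(\ol\Omega,(0,\infty)^{I+1})$ as an open subset of the Banach space $X \coleq \rmC(\ol\Omega,\bbR^{I+1})$, so that a local equilibrium $(\bfc^\ast,u^\ast)$ is an interior constrained local maximizer of the functional $\calS$ subject to $G \coleq (\calE,\calQ) = (E_0,Q_0)$, where the recorded Fr\'echet differentiability of $\calS$, $\calE$, $\calQ$ on $\FF$ is taken to be of class $\rmC^1$. The assertion is then exactly the Lagrange multiplier rule in Banach spaces, whose only nontrivial hypothesis is the constraint qualification that $\rmD G(\bfc^\ast,u^\ast) = (\rmD\calE,\rmD\calQ)\colon X \to \bbR^2$ be surjective. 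Using \eqref{eq:derivatives-energy-charge}, surjectivity follows because the two functionals are linearly independent: the $u$-component of $\rmD\calE$ is the nonzero constant $1$ while that of $\rmD\calQ$ vanishes, and $\bfq\neq\bm 0$. Concretely, the direction $(\bm 0,1)$ changes $\calE$ but not $\calQ$, whereas $(t\bfq,0)$ changes $\calQ$, which renders the corresponding $2\times 2$ test matrix upper triangular with nonzero diagonal, hence invertible. The Lagrange rule then yields $\eta,\kappa\in\bbR$ with $\rmD\calS=\eta\,\rmD\calE+\kappa\,\rmD\calQ$, that is, \eqref{eq:def.critical.pt}.

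\emph{Part (2).} Given a critical point $(\bfc^\ast,u^\ast)$ with multipliers $\eta,\kappa$, I would pass to the Lagrangian $\calL(\bfc,u)\coleq\calS(\bfc,u)-\eta\big(\calE(\bfc,u)-E_0\big)-\kappa\big(\calQ(\bfc,u)-Q_0\big)$, which satisfies $\rmD\calL(\bfc^\ast,u^\ast)=0$ by \eqref{eq:def.critical.pt} and coincides with $\calS$ on the constraint set. The decisive preliminary step is to read off the sign of $\eta$: testing the $u$-component of \eqref{eq:def.critical.pt} against $(\bm 0,h_0)$ and using \eqref{eq:derivatives-energy-charge} gives $\rmD_u S(\bfc^\ast,u^\ast)\equiv\eta$, so $\eta>0$ by the strict monotonicity of $S(\bfc,\cdot)$ (cf.\ Remark \ref{re:ConstantPotentials}). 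With $\eta>0$ the three ingredients of $\calL$ point the same way. The map $\bfc\mapsto\tfrac12\calB(\psi_\bfc+\psiext)$ is convex, being the composition of the affine map $\bfc\mapsto\psi_\bfc+\psiext$ (recall $\psi_\bfc=L^{-1}(\bfq\cdot\bfc)$ is linear in $\bfc$) with the nonnegative quadratic form $\calB$; hence $\calE$ is convex, $\calQ$ is affine, and $-\eta\calE-\kappa\calQ$ is concave. Since $-\rmD^2 S$ is positive definite throughout the convex domain $(0,\infty)^{I+1}$, $S$ is strictly concave, whence $\calS$ and therefore $\calL$ is strictly concave on $\FF$. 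By strict concavity the supporting-hyperplane inequality $\calL(\bfc,u)<\calL(\bfc^\ast,u^\ast)+\rmD\calL(\bfc^\ast,u^\ast)\big[(\bfc,u)-(\bfc^\ast,u^\ast)\big]$ holds for $(\bfc,u)\neq(\bfc^\ast,u^\ast)$; since $\rmD\calL(\bfc^\ast,u^\ast)=0$, the point is the unique global maximizer of $\calL$ over $\FF$. Restricting to the constraint set, where $\calL=\calS$, shows $(\bfc^\ast,u^\ast)$ to be the unique global -- in particular strict local -- maximizer of $\calS$ subject to \eqref{eq:constraints}.

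The crux, and what I expect to be the only genuine subtlety, is the sign $\eta>0$: it is exactly what aligns the concavity of the entropy with the convexity of the (nonlinear) electrostatic energy constraint, and it is also why I favor the global concavity argument over a direct second-order test. The second variation does confirm definiteness, namely
\[
  \rmD^2\calL(\bfc^\ast,u^\ast)\big[(\bm h,h_0),(\bm h,h_0)\big]=\int_\Omega\rmD^2 S(\bfc^\ast,u^\ast)\big[(\bm h,h_0),(\bm h,h_0)\big]\dd x-\eta\,\calB(\psi_{\bm h})<0
\]
for $(\bm h,h_0)\neq\bm 0$, with $\psi_{\bm h}=L^{-1}(\bfq\cdot\bm h)$; but converting pointwise negative definiteness into a strict local maximum in the $\rmC^0$ topology would run into the usual two-norm discrepancy, since the natural coercivity of the first term lives only in $\rmL^2$. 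The concavity argument sidesteps this entirely and, as a bonus, delivers global optimality, consistent with the uniqueness results established later in the paper.
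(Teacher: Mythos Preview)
Your Part~(1) is essentially the paper's argument: both invoke the Lagrange multiplier rule in Banach spaces after checking that the constraint map $(\calE,\calQ)$ is a submersion at the local equilibrium.

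Your Part~(2) takes a genuinely different and more elementary route than the paper. The paper proceeds by second-order analysis: it Taylor-expands the reduced Lagrangian $\calS-\eta\calE-\kappa\calQ$, extends the constraint map to $\rmL^2(\Omega)^{I+1}$ so that the constraint manifold can be parametrized in a Hilbert setting, and then shows that the second variation is bounded above by $-c\|\bfh\|_{\rmL^2}^2$ on the tangent space, using positive definiteness of $-\rmD^2S$ together with $\rmD^2\calE(\bfz_*)[\bfh,\bfh]=\tfrac12\calB(\psi_{\bfh'})\ge0$. This is exactly the two-norm issue you anticipate, and the paper works through it carefully. Your argument bypasses all of this by observing that the Lagrangian is strictly concave on the convex open set $\FF$ (strictly concave $\calS$, plus concave $-\eta\calE$ thanks to $\eta>0$ and the convexity of $\calE$, plus affine $-\kappa\calQ$), so that a critical point is automatically the unique \emph{global} maximizer of the Lagrangian on $\FF$, hence of $\calS$ on the constraint set. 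This is correct, shorter, and delivers global optimality---the content of Theorem~\ref{th:GlobEquilibrium}---as an immediate byproduct; the paper establishes that theorem separately via a further curve-based contradiction argument. The paper's second-order machinery would be needed if global concavity of the Lagrangian were not available, but under the present hypotheses your route is the cleaner one.
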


This result strongly relies on the structure and regularity of the constraints. In particular, we take advantage of the fact that the set of admissible states satisfying the energy and charge constraints forms a smooth manifold in the Banach space $\rmC(\ol\Omega)^{I+1}$ and that an extended version thereof, where states are allowed to take values outside $(0,\infty)^{I+1}$, forms a smooth manifold in $\rmL^2(\Omega)^{I+1}$.
Its proof will be postponed to the end of this manuscript, see page~\pageref{page:proofProp}.

We now state the key result of the paper on the existence, uniqueness, 
regularity, and positivity of a constrained critical point 
$(\bfc^\ast, u^\ast) \in \FF$ of the entropy functional $\calS$ 
under the charge and energy constraints \eqref{eq:constraints}. 
A crucial ingredient to the subsequent existence proof 
is the estimate \eqref{eq:existence-bound-mfs*}, which requires at least a 
slightly superlinear growth of $\mu \mapsto H^\ast(-\mu \bfq, -\eta)$. 
The following condition \eqref{eq:Assum.psiexst} on $\psie$ states that 
the regularity of $\psiext$ is better than simply $\rmH^1(\Omega)$. For this, 
our standing assumptions \ref{hp:eps}--\ref{hp:data}
are not sufficient, but the condition is certainly true, if 
$\psiext \in \rmH^2(\Omega)$ and $\eps \in \rmC^1(\ol\Omega)$ giving 
$D\in \rmL^2(\Omega)$ and $\grmR \in \rmH^{1/2}(\Gamma_\rmR)$.

\begin{theorem}[Constrained critical points of $\calS$]
	\label{thm:existence-uniqueness}
Assume that Hypotheses \ref{hypo:entropy} and \ref{hypo:bc} hold true. 
        Moreover, there shall exist positive constants 
        $c_\ast, \pp, \qq > 0$ such that 
        \begin{align}
        \label{eq:existence-bound-mfs*}
          \ol{H^*}(\mu,\eta) = H^*(-\mu\bfq , - \eta) \geq  
           c_*\,\frac{1+ |\mu|^\qq} {\eta^\pp} 
          \quad \text{and} \quad 
          \frac\qq{1{+}\pp} > 1 
        \end{align}
	    holds for all $\mu \in \mathbb R$ and $\eta > 0$. Finally, suppose that 
        constants $C > 0$ and $\vartheta \in (0, 1)$ exist such that for all 
        $\phi \in \rmH^1(\Omega)$, we have 
        \begin{align}
          \label{eq:Assum.psiexst}
          | \bbB(\phi,\psiext) | \leq C \| \phi\|_{\rmL^1}^{1-\vartheta} \|
          \phi\|^\vartheta_{\rmH^1}. 
        \end{align}
        Then, there exists a unique critical point  
		$(\bfc^\ast, u^\ast) \in \FF$ of the total entropy $\calS$ 
        subject to \eqref{eq:constraints}. The corresponding electrostatic potential, 
		which is the solution to \eqref{eq:poisson}--\eqref{eq:poisson-bc} 
        with right-hand side $\bfq{\cdot}\bfc^\ast + D$, satisfies 
        $\Psi^\ast \in \rmC(\ol \Omega)$ and the associated temperature is the 
		positive constant $\theta^\ast = (\rmD_u S(\bfc^\ast, u^\ast))^{-1} > 0$. 
	\end{theorem}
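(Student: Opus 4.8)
The plan is to reformulate the constrained critical-point problem via Lagrange multipliers, reducing the infinite-dimensional variational condition \eqref{eq:def.critical.pt} to a finite-dimensional problem in the two scalar parameters $\eta>0$ and $\kappa\in\bbR$. Writing out \eqref{eq:def.critical.pt} componentwise using \eqref{eq:derivatives-energy-charge} and the dual-variable correspondence \eqref{eq:dual-variables}, a critical point must satisfy $-\rmD S(\bfc^\ast,u^\ast)=(\bfy^\ast,v^\ast)$ with $v^\ast=-\eta$ (constant, giving constant temperature $\theta^\ast=1/\eta$) and $\bfy^\ast=-\eta(\psi_{\bfc^\ast}{+}\psiext)\bfq-\kappa\bfq$. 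By Hypotheses~\ref{hypo:entropy}, $\rmD H^\ast$ inverts $-\rmD S$, so $(\bfc^\ast,u^\ast)=\rmD H^\ast(\bfy^\ast,v^\ast)$ is determined pointwise once $\Psi^\ast=\psi_{\bfc^\ast}{+}\psiext$, $\eta$, and $\kappa$ are known. First I would fix $(\eta,\kappa)$ and solve the resulting self-consistent (nonlinear Poisson) problem for $\Psi$: substituting the charge density $\bfq\cdot\bfc=\bfq\cdot\rmD_\bfy H^\ast(-\eta(\Psi\bfq){-}\kappa\bfq,-\eta)$ into Poisson's equation yields a fixed-point equation of monotone type, whose solvability follows from the convexity of $H^\ast$ (hence monotonicity of $\rmD H^\ast$) together with the regularity of Lemma~\ref{lemma:poisson}, giving $\Psi\in\rmC(\ol\Omega)$ and thus $(\bfc,u)\in\FF$.

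With the inner solve providing, for each admissible $(\eta,\kappa)$, a unique state depending smoothly on the parameters, the next step is to impose the two scalar constraints \eqref{eq:constraint-energy}--\eqref{eq:constraint-charge}, i.e.\ to find $(\eta,\kappa)$ so that $\calE=E_0$ and $\calQ=Q_0$. This is where the reduced dual entropy $\ol{H^*}(\mu,\eta)=H^\ast(-\mu\bfq,-\eta)$ and the functional $\rmV(Q_0,\psiext)$ of Proposition~\ref{pr:MinElectroEner} enter: the map $(\eta,\kappa)\mapsto(\calE,\calQ)$ should be shown to be a homeomorphism (or at least proper and injective) from the admissible parameter region onto $\{(E_0,Q_0):E_0>\rmV(Q_0,\psiext)\}$, matching Hypotheses~\ref{hypo:bc}. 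Injectivity I expect to follow from strict convexity/monotonicity, and surjectivity from a degree or continuation argument combined with the a~priori bounds.

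The main obstacle will be the a~priori estimates guaranteeing that solutions do not escape to the boundary of the admissible range — concretely, controlling $\int_\Omega u\,\dd x$ and the electrostatic energy uniformly so that $\eta$ stays bounded away from $0$ and $\infty$. This is exactly the role of the superlinear-growth condition \eqref{eq:existence-bound-mfs*}: the bound $\ol{H^*}(\mu,\eta)\geq c_\ast(1{+}|\mu|^\qq)/\eta^\pp$ with $\qq/(1{+}\pp)>1$ provides the coercivity needed to trap $\eta$ and to dominate the (sign-indefinite) cross term $\langle\rho,\psiext\rangle_\HH$ in $\rmE(\rho,\psiext)$; the auxiliary interpolation estimate \eqref{eq:Assum.psiexst} is precisely what lets one absorb the $\psiext$-coupling using the superlinear $\mu$-growth rather than mere $\rmH^1$ control. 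I would extract these estimates from the Legendre duality $H(\bfc,u)+H^\ast(\bfy,v)=\bfy\cdot\bfc+vu$ evaluated at the critical state, turning the growth lower bound on $\ol{H^*}$ into an upper bound on the energy contributions.

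Finally, once existence and uniqueness of the parameter pair $(\eta,\kappa)$ are established, the regularity and positivity claims follow essentially for free: $\theta^\ast=1/\eta$ is a positive constant by construction, $\Psi^\ast\in\rmC(\ol\Omega)$ by Lemma~\ref{lemma:poisson} applied to the self-consistent right-hand side $\bfq\cdot\bfc^\ast+D\in\rmL^{p_\Omega}$, and $(\bfc^\ast,u^\ast)=\rmD H^\ast(\bfy^\ast,v^\ast)\in\FF$ by continuity of $\rmD H^\ast$ together with $\bfy^\ast,v^\ast\in\rmC(\ol\Omega)$. Uniqueness of the critical point reduces to uniqueness of $(\eta,\kappa)$ via the injectivity of the constraint map, closing the argument.
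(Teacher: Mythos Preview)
Your overall strategy is sound and would ultimately succeed, but it is organized differently from the paper's proof and misses the unifying structure that makes the argument clean.

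The paper does not split the problem into an inner nonlinear Poisson solve followed by a two-dimensional constraint map. Instead, it introduces an additional Lagrange multiplier $\lambda\in\HH$ for the Poisson constraint and forms the dual functional
\[
\calK(\eta,\kappa,\lambda)=\int_\Omega H^\ast\big({-}(\kappa{+}\lambda)\bfq,-\eta\big)\dd x+\kappa Q_0+\eta E_0-\bbB(\lambda,\psiext)+\frac{1}{2\eta}\calB(\lambda)
\]
on $(0,\infty)\times\bbR\times\HH$. This functional is strictly convex, and the growth condition \eqref{eq:existence-bound-mfs*} together with \eqref{eq:Assum.psiexst} is used precisely to prove that $\calK$ is coercive on the full product space. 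A unique minimizer then exists by the direct method, its $\lambda$-component is shown to be continuous via elliptic regularity for the Euler--Lagrange equation $\rmD_\lambda\calK=0$, and a direct computation identifies critical points of $\calK$ with critical points of the Lagrange functional $\calL$, hence with constrained critical points of $\calS$.

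Your ``inner solve'' is exactly the equation $\rmD_\lambda\calK=0$, and the two scalar constraints you want to impose afterwards are exactly $\rmD_\eta\calK=0$ and $\rmD_\kappa\calK=0$. So your two-step programme is equivalent to finding the critical point of $\calK$; the difference is that you propose a degree/continuation argument on the reduced map $(\eta,\kappa)\mapsto(\calE,\calQ)$, whereas the paper observes that this map is (up to sign) the gradient of the convex partial minimum $\inf_\lambda\calK(\eta,\kappa,\lambda)$, so that injectivity is automatic from strict convexity and surjectivity from coercivity, with no degree theory needed. The a~priori bounds you are looking for and the ``coercivity needed to trap $\eta$'' are literally the coercivity estimate for $\calK$; your plan to extract them from the Fenchel identity at the critical state would reproduce that computation in disguise, but with more bookkeeping because you would be working along a continuation path rather than with a single convex functional.

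In short: nothing in your outline is wrong, but by not naming the convex functional $\calK$ you are forced to argue injectivity and surjectivity separately and to invoke topological tools that the convexity makes unnecessary.
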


    \begin{remark}[Weaker growth assumptions]
        \label{rem:uniqueness}
        The assertion of Theorem \ref{thm:existence-uniqueness} on the 
        uniqueness of critical points $(\bfc^\ast, u^\ast)$ also holds 
        for all $E_0 \geq 0$ and without the lower bound 
        \eqref{eq:existence-bound-mfs*} on $\ol{H^*}$. The uniqueness 
        is already a consequence of the strict convexity of $\ol{H^*}$. 

        The lower bound \eqref{eq:existence-bound-mfs*} is trivially satisfied in the 
        situation of Example \ref{ex:entropies} provided that positively \emph{and} 
        negatively charged species are present, i.e., there exist $1 \leq i,j \leq I$ 
        such that $q_i < 0 < q_j$ holds. 

        If we have, for instance, $q_i \geq 0$ for all $1 \leq i \leq I$, then 
        the reduced dual entropy from Example \ref{ex:entropies} only satisfies 
        \begin{align}
        \label{eq:existence-bound-alternative} 
            \ol{H^*}(\mu, \eta) \geq c_* \frac{1 + \max\{0, -\mu\}^\qq}{\eta^\pp}
            \quad \text{and} \quad
            \frac\qq{1{+}\pp} > 1
        \end{align}
        with positive constants $c_*, \pp, \qq > 0$. 
        Proving the existence result of Theorem \ref{thm:existence-uniqueness} under 
        the relaxed condition \eqref{eq:existence-bound-alternative} for $E_0 > \mathrm V_\geq (Q_0, \psi_\mathrm{ext})$ (cf.\ Remark \ref{rem:rmV-geq}) is beyond the 
        scope of the present article, see Subsection \ref{su:calK.coercive} for further details. 
    \end{remark}

    The first main result of this article 
    ensures that the unique constrained critical point of $\calS$ from 
    Theorem \ref{thm:existence-uniqueness} is the unique constrained
    local maximizer of $\calS$ and, thus, the unique local equilibrium of 
    \eqref{eq:general-system} in the sense of Definition \ref{def:equilibrium}. 
    We formulate this result as a corollary, which is obtained by combining Theorem \ref{thm:existence-uniqueness} and Proposition~\ref{prop:crit=equil}.

    \begin{corollary}[Existence and uniqueness of a local equilibrium]
    \label{cor:equilibrium}
        Let the hypotheses of Theorem~\ref{thm:existence-uniqueness} be in place. 
        Further suppose that $S \in \rmC^2((0,\infty)^{I+1})$ is locally strongly concave. 
        Then, there exists a unique local equilibrium of \eqref{eq:general-system} 
        with respect to fixed total energy $E_0 > \rmV(Q_0, \psiext)$ 
        and total charge $Q_0 \in \bbR$.
    \end{corollary}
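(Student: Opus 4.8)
The plan is purely to combine the two preceding results, so no new estimates are required: Theorem \ref{thm:existence-uniqueness} supplies a unique constrained critical point, and Proposition \ref{prop:crit=equil} provides the two-way dictionary between constrained critical points and local equilibria. Concretely, I would first observe that all assumptions of Theorem \ref{thm:existence-uniqueness} are in force under the hypotheses of the corollary: Hypotheses \ref{hypo:entropy} and \ref{hypo:bc} are assumed, and the growth bound \eqref{eq:existence-bound-mfs*} together with the regularity condition \eqref{eq:Assum.psiexst} is inherited verbatim. The theorem then yields a unique $(\bfc^\ast, u^\ast) \in \FF$ satisfying the critical-point relation \eqref{eq:def.critical.pt} subject to the constraints \eqref{eq:constraints}, with positive constant temperature and continuous electrostatic potential.

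For the existence of a local equilibrium I would invoke the second assertion of Proposition \ref{prop:crit=equil}. The extra hypothesis of the corollary, namely that $S \in \rmC^2((0,\infty)^{I+1})$ is locally strongly concave, is exactly what that assertion requires, and it is consistent with (indeed strengthens) the strict convexity of $-S$ in Hypotheses \ref{hypo:entropy}. Applying it to the critical point $(\bfc^\ast, u^\ast)$ produced above shows that $(\bfc^\ast, u^\ast)$ is a strict local maximizer of $\calS$ under the energy and charge constraints, hence a local equilibrium of \eqref{eq:general-system} in the sense of Definition \ref{def:equilibrium}. The one point I would check carefully is that the ``strict local maximizer'' delivered by Proposition \ref{prop:crit=equil} coincides with the $\rmC^0$-neighborhood supremum property in Definition \ref{def:equilibrium}; this holds because both statements are phrased relative to the $\rmC(\ol\Omega)$-topology on $\FF$.

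For uniqueness I would reverse the direction: by the first assertion of Proposition \ref{prop:crit=equil}, every local equilibrium compatible with the prescribed $E_0$ and $Q_0$ is necessarily a constrained critical point of $\calS$. Since Theorem \ref{thm:existence-uniqueness} guarantees that such a critical point is unique in $\FF$, any local equilibrium must coincide with $(\bfc^\ast, u^\ast)$, which settles uniqueness. Being a corollary of a theorem and a proposition, the statement carries no genuine analytic obstacle; the only delicate step is the bookkeeping of hypotheses and topologies, in particular confirming that local strong concavity is compatible with Hypotheses \ref{hypo:entropy} and that the neighborhood notions in Definition \ref{def:equilibrium} and in Proposition \ref{prop:crit=equil} agree, so that ``strict local maximizer'' and ``local equilibrium'' are here genuinely the same object.
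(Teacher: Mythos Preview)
Your proposal is correct and matches the paper's approach exactly: the corollary is stated as an immediate consequence of combining Theorem~\ref{thm:existence-uniqueness} with both directions of Proposition~\ref{prop:crit=equil}, precisely as you outline. Your additional remarks on checking the compatibility of hypotheses and the agreement of the $\rmC^0$-neighborhood notions are appropriate bookkeeping but do not add anything beyond what the paper intends.
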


Our second main result establishes the existence and uniqueness of a 
constrained \emph{global} maximizer of $\calS$ and, hence, of a \emph{global} equilibrium 
of \eqref{eq:general-system} according to Definition \ref{def:equilibrium}. 
Important ingredients are the concavity of the total entropy $\calS$, 
the quadratic nature of the nonlinear part of the energy $\calE$, 
and the fact that the derivatives 
of $\calS$ and $\calE$ w.r.t.\ $u$ are both positive, which allows us 
to turn local into global statements. 
  
\begin{theorem}[Existence and uniqueness of a global equilibrium]
\label{th:GlobEquilibrium} 
Let the hypotheses of Theorem~\ref{thm:existence-uniqueness} be in place.
Further suppose that $S \in \rmC^2((0,\infty)^{I+1})$ is locally strongly
concave.  Then, the critical point $(\bfc^*,u^*)$ obtained in
Theorem~\ref{thm:existence-uniqueness} is the unique global maximizer of $\calS$
subject to the constraints of fixed total energy $E_0 > \rmV(Q_0, \psiext)$ 
and total charge $Q_0 \in \bbR$, i.e.,
\begin{equation}
  \label{eq:GlobMaxi}
  \calS(\bfc,u) \leq \calS(\bfc^*,u^*) \quad \text{ for all }(\bfc,u)\in  
  \FF \text{ with } \calE(\bfc,u)=E_0 \text{ and }
  \calQ(\bfc, u)=Q_0. 
\end{equation}
\end{theorem}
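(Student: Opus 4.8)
The plan is to convert the equality-constrained maximization into a genuinely convex problem by relaxing the energy constraint to an inequality. First I would record the two structural facts that drive everything. On the one hand, $\calS = \int_\Omega S\,\dd x$ is strictly concave on $\FF$, since $-S = H$ is strictly convex by Hypotheses \ref{hypo:entropy}. On the other hand, the energy functional $\calE$ is convex: writing $\psi_\bfc = L^{-1}(\bfq{\cdot}\bfc)$, the map $\bfc \mapsto \psi_\bfc + \psiext$ is affine, and $\tfrac12\calB(\cdot) = \tfrac12\bbB(\cdot,\cdot)$ is a convex quadratic form (coercive by Lemma \ref{lemma:poisson.gen}); hence $\bfc \mapsto \tfrac12\calB(\psi_\bfc + \psiext)$ is convex, and adding the linear term $\int_\Omega u\,\dd x$ keeps $\calE$ jointly convex in $(\bfc,u)$. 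Since $\calQ$ is linear, the relaxed feasible set
\[
  K \coleq \big\{ (\bfc,u) \in \FF \ \big|\ \calE(\bfc,u) \le E_0,\ \calQ(\bfc,u) = Q_0 \big\}
\]
is convex and contains the equality-constrained set appearing in \eqref{eq:GlobMaxi}.

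Next I would show that the critical point $(\bfc^*,u^*)$ from Theorem \ref{thm:existence-uniqueness} is the global maximizer of $\calS$ over $K$. The decisive sign information comes from monotonicity in $u$: evaluating the $u$-component of the critical-point relation \eqref{eq:def.critical.pt} via \eqref{eq:derivatives-energy-charge} gives $\eta = \rmD_u S(\bfc^*,u^*) > 0$. For arbitrary $(\bfc,u) \in K$, concavity of $\calS$ yields
\[
  \calS(\bfc,u) - \calS(\bfc^*,u^*) \le \big\langle \rmD\calS(\bfc^*,u^*),\, (\bfc{-}\bfc^*,\, u{-}u^*) \big\rangle,
\]
into which I substitute \eqref{eq:def.critical.pt}. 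Since $\calQ$ is linear, its contribution is $\kappa\big(\calQ(\bfc,u) - Q_0\big) = 0$; since $\calE$ is convex, its contribution is bounded by $\eta\big(\calE(\bfc,u) - E_0\big) \le 0$, using $\eta > 0$ and $\calE(\bfc,u) \le E_0$ on $K$. Hence $\calS(\bfc,u) \le \calS(\bfc^*,u^*)$ for every $(\bfc,u) \in K$.

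Finally, because the equality-constrained set in \eqref{eq:GlobMaxi} is contained in $K$, this is exactly the claimed global maximality. Uniqueness I would obtain from the strict concavity of $\calS$: given two global maximizers, their midpoint again lies in the convex set $K$ and, unless the two agree almost everywhere, would strictly increase $\calS$, a contradiction; continuity in $\FF$ then upgrades ``a.e.\ equal'' to ``equal''. The maximizer must therefore be the unique critical point $(\bfc^*,u^*)$ of Theorem \ref{thm:existence-uniqueness}.

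The main obstacle is conceptual rather than computational: the equality constraint $\calE = E_0$ defines a curved, sphere-like hypersurface (owing to the quadratic electrostatic term), so the concave functional $\calS$ cannot be maximized over it by convex duality directly. The resolution is the relaxation to the convex set $K$, and this relaxation is harmless precisely because the Lagrange multiplier $\eta = \rmD_u S(\bfc^*,u^*) > 0$ is positive—a reflection of the strict monotonicity of both $S$ and $\calE$ in $u$—which is exactly the nonnegativity condition that renders the KKT relation \eqref{eq:def.critical.pt} sufficient for global optimality in the relaxed convex problem. The only genuinely technical point is the rigorous justification of the first-order (sub/super)gradient inequalities for $\calS$ and $\calE$ as Fréchet-differentiable functionals on the Banach space $\FF$.
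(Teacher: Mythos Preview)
Your argument is correct and takes a genuinely different, in fact cleaner, route than the paper's own proof. The paper argues by contradiction: assuming a competitor $(\ol\bfc,\ol u)$ with $\calS(\ol\bfc,\ol u)\ge\calS(\bfc^*,u^*)$, it builds an explicit curve $(\wt\bfc_\vartheta,\wt u_\vartheta)$ that stays on the constraint manifold $\{\calE=E_0,\ \calQ=Q_0\}$ by adding to the straight segment a correction $\vartheta(1{-}\vartheta)\frac\eps2|\nabla\psi_{\bfc^*}-\nabla\psi_{\ol\bfc}|^2\ge 0$ in the $u$-component; concavity of $\calS$ and $\rmD_uS>0$ then force $\calS(\wt\bfc_\vartheta,\wt u_\vartheta)\ge\calS(\bfc^*,u^*)$ for all $\vartheta$, contradicting the \emph{strict local} maximality established in Proposition~\ref{prop:crit=equil}. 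Your approach bypasses Proposition~\ref{prop:crit=equil} entirely: you relax to the convex set $K=\{\calE\le E_0,\ \calQ=Q_0\}$ and use the KKT relation directly, combining the concave gradient inequality for $\calS$, the convex gradient inequality for $\calE$, and the sign $\eta=\rmD_uS(\bfc^*,u^*)>0$ to obtain $\calS\le\calS(\bfc^*,u^*)$ on all of $K$. This is a classical sufficiency argument for constrained convex optimization, and it is more economical here: it uses only $C^1$ differentiability and strict concavity of $S$ (Hypotheses~\ref{hypo:entropy}), so the additional $C^2$ and local strong concavity hypotheses of the theorem, which the paper needs to invoke Proposition~\ref{prop:crit=equil}, are not actually required by your proof. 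The paper's route, by contrast, makes the mechanism geometrically explicit---the quadratic energy defect along a chord is absorbed into the internal energy---and ties the result to the local picture developed earlier.
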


\subsection{Global optimization by the direct method}
\label{su:DirectMethod}

Let us now summarize our results on the existence, uniqueness, and regularity of a global optimizer based on the direct method.
We will mostly work with the negative entropy $H(\bfc,u)=-S(\bfc,u)$, which is convex. All results assume Hypotheses~\ref{hp:eps}--\ref{hp:data} to be in force. As is standard in the context of the calculus of variation, we extend $H$ by $+\infty$ and consider it as a function defined on the whole space $\mathbb R^{I+1}$, cf.~\ref{hp:h.pr.cx.lsc}.

The main advantage of the direct method is that it allows us to separate existence and uniqueness results from regularity. In particular, it does not require differentiability of the entropy functional.
A technical complication that arises in the direct method stems from the fact that the physically relevant entropies are sublinear in $u$ as $u\to\infty$. Thus, the primary control of the internal energy densities $(u_k)_k$  of any optimizing sequence $(\bfc_k,u_k)_k$ comes from the energy constraint $\mathcal{E}(\bfc_k,u_k)=E_0$ leading to a bound for $(u_k)_k$ in the non-reflexive Banach space $\rmL^1(\Omega)$. It is therefore natural to first formulate the optimization problem in a larger functional setting allowing for finite measures as internal energy distributions. Let us anticipate that the monotonicity of the entropy in the internal energy component enables us to show, in a separate step, that any optimizer must be absolutely continuous with respect to the Lebesgue measure, so that concentrations can eventually be ruled out. Under additional mild differentiability assumptions on the entropy function and some extra hypotheses on the data, the optimizer is shown to be continuous and uniformly positive on $\Om$ componentwise, see Section~\ref{ssec:reg.opt}. This follows from a monotonicity argument and elliptic regularity, since at the optimizer the right-hand side $\bfq{\cdot}\bfc$ of
the Poisson equation~\eqref{eq:poisson-parts.1} can be expressed as a monotonic function of $\psi_\bfc$.

In the subsequent results, negative entropy functions $H(\bfc,u)=-S(\bfc,u)$ are admissible if they fulfil the following conditions:
\paragraph{Hypotheses.}
Let $\sfD\coleq (0,\infty)^{I+1}$.
\begin{enumerate}[label={\rm(H\arabic*)}]
	\item\label{hp:h.pr.cx.lsc}  $H:\mathbb{R}^{I+1}\to\mathbb R\cup\{\infty\}$ is proper, convex, and lower semicontinuous. 
	\item\label{hp:h.dom} $\text{dom}\,H\coleq \{H<\infty\}$ is given by $\ol\sfD=[0,\infty)^{I+1}$, 
	and $H:\text{dom}\,H\to\mathbb{R}$ is strictly convex. 
	\item\label{hp:mon.h} \textit{Monotonicity in $u$:} the map $(0,\infty)\ni u\mapsto H(\bfc,u)$ is strictly decreasing for all $\bfc\in [0,\infty)^I$. 
	\item Behavior at infinity:
	\begin{itemize}
		\item\label{hp:S.infinity} \textit{Sublinearity in $u$:} for all $(\bfc,u)\in\ol\sfD$, $\lim_{t\to\infty }t^{-1}H(\bfc,tu)=0$. 
		\item \textit{Superlinearity in $\bfc$:} there exists a continuous convex function $\gamma:\mathbb{R}^I\to\mathbb{R}_+$ such that for all $\bfc\in [0,\infty)^I\setminus\{0\}$
		\begin{align}
		\lim_{t\to\infty}t^{-1}\gamma(t\bfc)=+\infty,
		\end{align}
		a continuous function $\sigma:\mathbb{R}\to\mathbb R$ with $\lim_{t\to\infty}t^{-1}\sigma(tu)=0$ for $u>0$, and constants $K_0,K_1\ge0$ such that for all $(\bfc,u)\in \mathbb{R}^{I+1}$ 
		\begin{align}\label{eq:hp.coerc.S}
			H(\bfc,u)\ge \gamma(\bfc) - K_1\sigma(u)-K_0+\iota_{\ol\sfD}(\bfz).
		\end{align}
		Here, $\iota_{\ol\sfD}$ denotes the indicator function of the set $\ol\sfD$ satisfying $\iota_{\ol\sfD}(\bfz)=0$ if $\bfz\in\ol\sfD$ and $\iota_{\ol\sfD}(\bfz)=+\infty$ if $\bfz\not\in\ol\sfD$.
	\end{itemize}
\end{enumerate}

\begin{remark}
    The entropy functions considered in~\cite{FHKM22GEAE,Hopf_2022} satisfy these properties. For the coercivity estimate, see~\cite[eq.~(2.7)]{FHKM22GEAE} and~\cite[eq.~(6.7)]{Hopf_2022}.
\end{remark}

We denote by $M_+(\ol\Omega)$ the set of non-negative finite Radon measures $\nu$ on $\ol\Omega$ and abbreviate $U\coleq  \rmL^1_+(\Omega)^{I}\times M_+(\ol\Omega)$. 
Every $\nu\in M_+(\ol\Omega)$ can be uniquely decomposed in an absolutely continuous part $u\dd x$ and a singular part $\nu^s$ w.r.t.\ the Lebesgue measure. 
To extend the entropy functional to the set $U$, we recall that, by~\ref{hp:S.infinity}, $H$ is sublinear in $u$ as $u\to\infty$. Therefore, the natural extension $\mathcal{\wt H}$ of $\mathcal{H}\coleq -\mathcal{S}$ to $U$ is given by (cf.~\cite{DT_1984})
\begin{align}
\label{eq:negative-entropy-functional}
\mathcal{\wt H}(\bfc,\nu) \coleq  \mathcal{H}(\bfc,u),\qquad \nu=u\dd x+\nu^s.
\end{align}
The extended functional $\mathcal{\wt H}$ is clearly convex, and its restriction to $\rmL^1_+(\Omega)^{I+1}$ is strictly convex.
An appropriate weak lower semicontinuity property of $\mathcal{\wt H}$ will be shown in Subsection~\ref{ssec:ext-meas}.

We next define the extended total energy functional: for $(\bfc,\nu)\in U$, we let
\begin{align}\label{eq:energy.ext}
\mathcal{\wt E}(\bfc,\nu) \coleq \begin{cases}
	 \frac12\calB(\psi_{\bfc} {+} \psiext) + \nu(\ol\Omega)
    &\text{ if }\bfq{\cdot}\bfc\in \HH^*,
    \\+\infty&\text{ if }\bfq{\cdot}\bfc\not\in \HH^*.
\end{cases} 
\end{align}
The original energy functional $\mathcal{E}=\mathcal{E}(\bfc,u)$ 
will be understood as a map $\mathcal{E}: \rmL^1_+(\Om)^{I+1}\to\mathbb{R}\cup\{\infty\}$,  
(resp.\ as~$\mathcal{E}: \rmL^1(\Om)^{I+1}\to\mathbb{R}\cup\{\infty\}$ in Section~\ref{ssec:reg.opt}), 
where the value $\mathcal{E}(\bfc,u)$  is defined as in~\eqref{eq:constraint-energy}.
Finally, we let $\mathcal{\wt Q}(\bfc,\nu) \coleq \mathcal{Q}(\bfc,u) 
= \int_\Om \bfq{\cdot}\bfc\dd x$ for $(\bfc,\nu)\in U$, 
where as above $\nu=u \, \dn x+\nu^s$. 
\paragraph{Constrained sets.}
Given $(E_0,Q_0)\in\mathbb{R}^2$, we define
\begin{align*}
	\wt M_{E_0,Q_0} \coleq \bigset{ \bfmu\in U }{ \mathcal{\wt E}(\bfmu)=E_0, \mathcal{\wt Q}(\bfmu)=Q_0 }.
\end{align*}
We further let
\begin{align*}
	M_{E_0,Q_0} \coleq \bigset{ \bfz\in \rmL^1_+(\Omega)^{I+1} } {\mathcal{E}(\bfz)=E_0, \mathcal{Q}(\bfz)=Q_0 },
\end{align*}
which can be identified with a subset of $\wt M_{E_0,Q_0}$ via $\bfz=(\bfc,u)\mapsto (\bfc,u \, \dn x)$.

We recall the extended negative entropy functional $\mathcal{\wt H}:U\to\mathbb{R}\cup\{\infty\}$ and let $\mathcal{\wt S}\coleq -\mathcal{\wt H}$. In the following, we show existence and uniqueness of a solution to the constrained entropy maximization problem.

\begin{theorem}[Existence and uniqueness]\label{thm:ex-uniq.direct}
Let $(E_0,Q_0)\in\mathbb{R}^2$ satisfy $\wt M_{E_0,Q_0}\not=\emptyset$ and
$\sup_{\wt M_{E_0,Q_0}}\mathcal{\wt S}>-\infty$.
Then, there exists a unique solution $\bfmu_*=(\bfc_*,\nu_*)\in \wt M_{E_0,Q_0}$ of the optimization problem
  \begin{align*}
        \mathcal{\wt S}(\bfmu_*) = \sup_{\wt M_{E_0,Q_0}}\mathcal{\wt S}.
  \end{align*}
  The measure part $\nu_*$ of the optimizer is absolutely continuous with respect to the Lebesgue measure, i.e., it has the form $\nu_*=u_*\,\dn x$ for some $u_*\in \rmL^1_+(\Omega)$.
  In particular, it holds that 
\begin{align}\label{eq:strictOpt.dens}
    \mathcal{S}(\bfc_*,u_*)> \mathcal{S}(\bfc,u) 
 \quad\text{ for all }(\bfc,u)\in M_{E_0,Q_0} \setminus\{(\bfc_*,u_*)\}.
\end{align}
Furthermore, any maximizing sequence $(\bfc_k,\nu_k)_k\subseteq \wt M_{E_0,Q_0}$ converges (weakly, weakly-star) in $U$ to the optimizer $\bfmu_*$, and 
the sequence of electrostatic potentials $(\psi_{\bfc_k})_k$ converges strongly in $\HH$ to the electrostatic potential $\psi_{\bfc_*}$ of the optimizer.
\end{theorem}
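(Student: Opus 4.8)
The plan is to apply the direct method of the calculus of variations to the functional $\mathcal{\wt S}=-\mathcal{\wt H}$ on the constraint set $\wt M_{E_0,Q_0}$. First I would fix a maximizing sequence $\bfmu_k=(\bfc_k,\nu_k)\in \wt M_{E_0,Q_0}$, so that $\mathcal{\wt S}(\bfmu_k)\to \sup_{\wt M_{E_0,Q_0}}\mathcal{\wt S}=:s_*>-\infty$. The key is to extract compactness. From the energy constraint $\mathcal{\wt E}(\bfmu_k)=E_0$ one reads off two things simultaneously: the total mass $\nu_k(\ol\Omega)=E_0-\tfrac12\calB(\psi_{\bfc_k}+\psiext)$ is bounded (using $\calB\geq 0$), giving a uniform bound on $(\nu_k)_k$ in the non-reflexive space $M_+(\ol\Omega)$; and $\tfrac12\calB(\psi_{\bfc_k}+\psiext)\leq E_0$ bounds $\psi_{\bfc_k}$ in $\HH$. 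Independently, the superlinearity hypothesis~\ref{hp:S.infinity} (the $\gamma(\bfc)$ lower bound in~\eqref{eq:hp.coerc.S}) combined with the fact that $\mathcal{\wt H}(\bfmu_k)=-s_*+o(1)$ is bounded above controls $\int_\Omega \gamma(\bfc_k)\dd x$ and, via the superlinear growth of $\gamma$, yields uniform integrability of $(\bfc_k)_k$; by the Dunford--Pettis theorem this gives weak $\rmL^1$-compactness of the concentrations. Passing to a subsequence, I would obtain $\bfc_k\wk \bfc_*$ in $\rmL^1(\Omega)^I$ and $\nu_k\weakstar \nu_*$ in $M_+(\ol\Omega)$.

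Next I would verify that the limit is admissible and optimal. The charge functional $\mathcal{\wt Q}$ is linear in $\bfc$, so $\mathcal{\wt Q}(\bfmu_k)=Q_0$ passes to the limit under weak $\rmL^1$ convergence, giving $\mathcal{\wt Q}(\bfmu_*)=Q_0$. For the energy, weak $\rmL^1$ convergence of $\bfc_k$ gives $\bfq{\cdot}\bfc_k\wk \bfq{\cdot}\bfc_*$ and hence (by continuity of $L^{-1}$) $\psi_{\bfc_k}\wk \psi_{\bfc_*}$ in $\HH$, so $\calB(\psi_{\bfc_*}+\psiext)\leq \liminf \calB(\psi_{\bfc_k}+\psiext)$ by weak lower semicontinuity of the quadratic form; combined with weak-star convergence of measures, $\mathcal{\wt E}(\bfmu_*)\leq \liminf \mathcal{\wt E}(\bfmu_k)=E_0$. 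The crucial point is the weak lower semicontinuity of $\mathcal{\wt H}$ along this mixed (weak-$\rmL^1$, weak-star) convergence; here I invoke the semicontinuity result announced for Subsection~\ref{ssec:ext-meas}, whose proof rests on the Reshetnyak/De~Giorgi representation for integrals of convex functions of measures (the recession function of $H$ in the $u$-direction vanishes by sublinearity~\ref{hp:S.infinity}, which is exactly why the extension~\eqref{eq:negative-entropy-functional} ignores the singular part $\nu^s$). To close the argument I must upgrade the energy \emph{inequality} to the \emph{equality} $\mathcal{\wt E}(\bfmu_*)=E_0$; this is where the strict monotonicity~\ref{hp:mon.h} of $H$ in $u$ enters: if the energy deficit were strict, one could add mass to $\nu_*$ (increasing $\mathcal{\wt S}$ by monotonicity) while restoring $\mathcal{\wt E}=E_0$ and keeping $\mathcal{\wt Q}$ fixed, contradicting optimality. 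Thus $\bfmu_*\in\wt M_{E_0,Q_0}$ and $\mathcal{\wt S}(\bfmu_*)\geq \limsup \mathcal{\wt S}(\bfmu_k)=s_*$, so $\bfmu_*$ is a maximizer.

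For uniqueness and absolute continuity I would argue as follows. Uniqueness follows from strict convexity of $\mathcal{\wt H}$ on $\rmL^1_+(\Omega)^{I+1}$ together with convexity of the constraint structure: given two maximizers, the constraint set $\wt M_{E_0,Q_0}$ need not be convex because $\mathcal{\wt E}$ is quadratic in $\bfc$, but here I would exploit that the map $\bfc\mapsto \tfrac12\calB(\psi_\bfc+\psiext)$ is itself convex, so the sublevel set $\{\mathcal{\wt E}\leq E_0\}$ is convex; the midpoint of two maximizers then lies in $\{\mathcal{\wt E}\leq E_0\}$ with the correct charge, and by strict convexity of $\mathcal{\wt S}=-\mathcal{\wt H}$ (strictly concave) its entropy strictly exceeds the common maximum unless the two agree, after which the monotonicity/energy-equality step brings it back onto $\mathcal{\wt E}=E_0$. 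Absolute continuity of $\nu_*$ is the same monotonicity phenomenon made quantitative: if $\nu_*=u_*\dd x+\nu_*^s$ had a nontrivial singular part, then replacing $\nu_*$ by $u_*\dd x+(\text{an absolutely continuous redistribution of the mass }\nu_*^s(\ol\Omega))$ leaves $\mathcal{\wt E}$ and $\mathcal{\wt Q}$ unchanged (the mass only enters through $\nu(\ol\Omega)$) but \emph{strictly increases} $\mathcal{\wt S}$, because the extension~\eqref{eq:negative-entropy-functional} assigns no entropy value to $\nu^s$ whereas distributing that mass absolutely continuously produces strictly positive entropy by the strict concavity and sublinear-but-increasing character of the thermal entropy — contradicting optimality. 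Hence $\nu_*=u_*\dd x$, which immediately yields the strict inequality~\eqref{eq:strictOpt.dens} on $M_{E_0,Q_0}$ and, since the whole maximizing sequence was shown to converge after passing to a subsequence and the limit is unique, the full sequence converges; the strong convergence $\psi_{\bfc_k}\to\psi_{\bfc_*}$ in $\HH$ then follows by combining weak convergence with convergence of the quadratic energy norms (a standard weak-convergence-plus-norm-convergence argument, noting $\calB(\psi_{\bfc_k})\to\calB(\psi_{\bfc_*})$ is forced by the energy equality in the limit together with $\nu_k(\ol\Omega)\to\nu_*(\ol\Omega)$). \emph{The main obstacle} I expect is the weak lower semicontinuity of $\mathcal{\wt H}$ under only weak-$\rmL^1$ convergence of $\bfc_k$ and weak-star convergence of $\nu_k$, precisely because $H$ couples $\bfc$ and $u$ nonlinearly and one must handle the singular part of the limiting measure correctly via the recession function — but this is exactly the content deferred to Subsection~\ref{ssec:ext-meas}, which I would cite rather than reprove here.
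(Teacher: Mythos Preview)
Your proof plan is correct and follows essentially the same approach as the paper's proof, which likewise proceeds via compactness (superlinearity of $\gamma$ for uniform integrability of $\bfc_k$, the energy constraint for bounds on $\nu_k(\ol\Omega)$ and $\psi_{\bfc_k}$), the weak/weak-star lower semicontinuity of $\mathcal{\wt H}$ from Subsection~\ref{ssec:ext-meas}, the monotonicity trick~\ref{hp:mon.h} to rule out both the energy gap and the singular part of $\nu_*$, strict concavity on the convex sublevel set $\{\mathcal{\wt E}\le E_0\}$ for uniqueness, and the weak-plus-norm argument for strong $\HH$-convergence of the potentials. One small refinement worth making explicit: the step ``weak $\rmL^1$ convergence of $\bfq{\cdot}\bfc_k$ gives $\psi_{\bfc_k}\wk\psi_{\bfc_*}$ in $\HH$ by continuity of $L^{-1}$'' is not quite right as stated (weak $\rmL^1$ convergence does not imply weak $\HH^*$ convergence unless $\HH\hookrightarrow\rmL^\infty$), so instead use the $\HH$-bound on $(\psi_{\bfc_k})_k$ you already extracted to pass to a weak $\HH$-limit $\tilde\psi$ and then identify $L\tilde\psi=\bfq{\cdot}\bfc_*$ by uniqueness of distributional limits---this is precisely how the paper handles it in Step~3 and, incidentally, is what establishes $\bfq{\cdot}\bfc_*\in\HH^*$ and hence $\bfmu_*\in\mathrm{dom}\,\mathcal{\wt E}$.
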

	Sufficient criteria for the conditions $\wt M_{E_0,Q_0}\not=\emptyset$ and
	$\sup_{\wt M_{E_0,Q_0}}\mathcal{\wt S}>-\infty$ to hold true are provided in Lemma~\ref{l:feasible.set}.

Uniform positivity and continuity of the global optimizers can be shown by means of a  regularization of the entropy. 
In addition to hypotheses \ref{hp:h.pr.cx.lsc}--\ref{hp:S.infinity}, we now further impose the differentiability of the entropy function:
\begin{enumerate}[resume*]
    \item $H$ is continuously differentiable on $(0,\infty)^{I+1}$, i.e., $H\in \rmC^1((0,\infty)^{I+1})$.  \label{hp:h.C1}
\end{enumerate}

\begin{theorem}[Regularity]\label{thm:reg-globalopt.v1}
In addition to~\ref{hp:h.pr.cx.lsc}--\ref{hp:S.infinity}, assume hypothesis~\ref{hp:h.C1}.	Let $(E_0,Q_0)\in\mathbb{R}^2$, and 
	suppose that the functional $\calK$ given by~\eqref{eq:k-functional} satisfies the coercivity property~\eqref{eq:coerc.K}.
	Then, $M_{E_0,Q_0}\not=\emptyset$, $\sup_{\wt M_{E_0,Q_0}}\mathcal{\wt S}>-\infty$, and the 
	unique optimizer $\bfz_*\in \rmL^1_+(\Om)^{I+1}$ constructed 
	in Theorem~\ref{thm:ex-uniq.direct} satisfies $\bfz_{*}\in \rmC(\ol\Om,(0,\infty)^{I+1})$.
\end{theorem}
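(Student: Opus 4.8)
The plan is to deduce the two preliminary claims from the coercivity of $\calK$ and then to reduce the regularity of the optimizer to elliptic regularity for a single monotone semilinear Poisson equation, treated through an entropy regularization. First I would settle $M_{E_0,Q_0}\neq\emptyset$ and $\sup_{\wt M_{E_0,Q_0}}\wt\calS>-\infty$: these are precisely the assumptions required to invoke Theorem~\ref{thm:ex-uniq.direct}, and I expect them to be exactly the content of Lemma~\ref{l:feasible.set}, derived from the coercivity property~\eqref{eq:coerc.K} of the functional $\calK$ in~\eqref{eq:k-functional}. Coercivity guarantees that $\calK$ attains its extremum at an interior pair of Lagrange multipliers $(\kappa_*,\eta_*)$ with $\eta_*>0$; feeding these into the dual-entropy representation produces an explicit continuous, uniformly positive feasible state, which simultaneously shows $M_{E_0,Q_0}\neq\emptyset$ and bounds $\sup\wt\calS$ from below. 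Granting this, Theorem~\ref{thm:ex-uniq.direct} supplies the unique optimizer $\bfmu_*=(\bfc_*,\nu_*)$ with $\nu_*=u_*\dd x$, and it remains to upgrade its regularity.

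The structural heart of the argument is that a constrained critical point necessarily has constant temperature and a charge density that is a monotone function of the potential. Writing the Lagrange condition $\rmD\calS=\eta\,\rmD\calE+\kappa\,\rmD\calQ$ and inserting~\eqref{eq:derivatives-energy-charge}, the $u$-component gives $-\rmD_u S\equiv-\eta$ (constant inverse temperature, with $\eta>0$ since $\rmD_u S>0$ by the monotonicity~\ref{hp:mon.h}), while the $\bfc$-component gives $-\rmD_\bfc S=-(\eta\Psi+\kappa)\bfq$, where $\Psi=\psi_{\bfc}+\psiext$ is the total potential solving~\eqref{eq:poisson}--\eqref{eq:poisson-bc}. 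Inverting $-\rmD S$ through the dual entropy of~\eqref{eq:def-dual-entropy} yields $\bfc=\rmD_\bfy H^*\big({-}(\eta\Psi+\kappa)\bfq,-\eta\big)$, so that $\bfq{\cdot}\bfc$ is a non-increasing function of $\Psi$; indeed its $\Psi$-derivative equals $-\eta\,\bfq{\cdot}\big(\rmD^2_{\bfy\bfy}H^*\big)\bfq\le0$ by convexity of $H^*$ and $\eta>0$. Substituting this into the Poisson problem converts it into a semilinear elliptic equation $-\DIV(\eps\nabla\Psi)=\bfq{\cdot}\bfc(\Psi)+D$ with a monotone nonlinearity, which is uniquely solvable by monotone-operator theory.

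Since the direct-method optimizer is a priori only in $\rmL^1_+$ and, by~\ref{hp:h.C1}, $H$ is differentiable only on the open orthant, I would justify the above Lagrange conditions via the announced regularization: approximate $H$ by strictly convex $H_\delta\in\rmC^1(\mathbb R^{I+1})$ with appropriate superlinear growth and $H_\delta\to H$, solve the regularized constrained maximization (whose optimizer $\bfz_\delta$ lies in the interior so that the multiplier calculus applies), and obtain for each $\delta$ a potential $\Psi_\delta$ solving the corresponding monotone semilinear equation. Elliptic regularity of De Giorgi--Nash--Moser type, as already used in Lemma~\ref{lemma:poisson} with the data exponents $p_\Omega>\max\{1,d/2\}$ and $p_\rmR>\max\{1,d-1\}$, then gives $\Psi_\delta\in\rmC(\ol\Omega)$ together with an $\rmL^\infty$-bound. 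The coercivity~\eqref{eq:coerc.K} keeps the multipliers $(\kappa_\delta,\eta_\delta)$ in a compact subset of $\mathbb R\times(0,\infty)$, so that $\eta_\delta$ stays bounded away from $0$ and $\infty$; consequently $\bfc_\delta=\rmD_\bfy H^*_\delta\big({-}(\eta_\delta\Psi_\delta+\kappa_\delta)\bfq,-\eta_\delta\big)$ is uniformly equicontinuous and uniformly bounded below by a positive constant. Passing to the limit $\delta\to0$ and identifying the limit with $\bfmu_*$ through the uniqueness and concavity in Theorem~\ref{thm:ex-uniq.direct} yields $\bfz_*=(\bfc_*,u_*)\in\rmC(\ol\Omega,(0,\infty)^{I+1})$.

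The main obstacle is the uniform-in-$\delta$ quantitative control: establishing that $\Psi_\delta$ is bounded in $\rmL^\infty$ and that $\bfc_\delta$ is bounded below away from zero, uniformly as $\delta\to0$. Both hinge on the uniform multiplier bounds extracted from the coercivity~\eqref{eq:coerc.K} and on $\rmL^\infty$ elliptic estimates for the monotone semilinear Poisson equation with merely $\rmL^p$ data; verifying that the regularization can be chosen to preserve these estimates, and that the limit of $\bfz_\delta$ indeed coincides with the abstract optimizer rather than merely being feasible, is the delicate part of the proof.
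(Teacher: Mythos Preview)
Your proposal is correct and follows essentially the same route as the paper: regularize $H$ (the paper uses the Moreau envelope $H_\delta$), solve the regularized constrained maximization, derive the Lagrange multipliers, exploit the monotone semilinear structure of the Poisson equation for $\Psi_\delta$ together with elliptic $\rmL^\infty$ estimates, use the coercivity of $\calK$ (via $\calK_\delta\ge\calK$) to keep $(\eta_\delta,\kappa_\delta,\lambda_\delta)$ in a compact set, and pass to the limit. One minor ordering difference: the paper does not settle $M_{E_0,Q_0}\neq\emptyset$ and $\sup\wt\calS>-\infty$ as a preliminary step via Lemma~\ref{l:feasible.set} (whose hypotheses differ from coercivity of $\calK$), but obtains them as a byproduct of the regularization---the limit state $\bfz_0$ constructed from the minimizer of $\calK$ is itself the required feasible state with finite entropy, after which uniqueness in Theorem~\ref{thm:ex-uniq.direct} identifies $\bfz_0=\bfz_*$.
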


\subsection{Examples for suitable entropies}
\label{su:examples}

In this subsection, we provide two families of entropy density functions $(\bfc,u)\mapsto S(\bfc,u)$ which satisfy all the assumptions of our theory. 
Moreover,  we comment on examples where the concentrations are bounded 
from above by a size exclusion bound $ \sum_{i=1}^I c_i \leq w(u)$ or 
by a Fermi--Dirac (or Blakemore) type restriction $ c_i \leq w_i(u)$. 

The simplest entropy, which may not be very interesting from the point of physics, 
is given in the separated form 
\begin{align}
\label{eq:S.separated}
H(\bfc,u)=-S(\bfc,u)= H_0(u) + G(\bfc) \ \Longrightarrow \ 
 H^*(\bfy,v) = H_0^*(v) + G^*(\bfy).
\end{align}
Even simpler, one can assume $G(\bfc)= \sum_{i=1}^I G_i(c_i)$ giving 
$\bfG^*(\bfy)= \sum_{i=1}^I G_i^*(y_i)$. We emphasize that even in this 
additive setting the problem remains non-trivial because the energy constraint 
$\calE(\bfc,u)=E_0$ couples the variables nonlinearly.  

\begin{example}[Entropy functions $S$, $H^*$, and $\ol{H^*}$]
\label{ex:entropies}
We consider a special case of the entropy $S(\bfc,u)$ 
proposed in \cite{MieMit18CEER} and considered above, namely
\[
S(\bfc,u) \coleq \beta_0 w(u) - \sum_{i=1}^{I}  \Big( 
  c_i \log c_i - c_i - c_i \log \big( \beta_i (w(u) {+} w_0) \big) \Big),
\]
where $\beta_0, \beta_i, w_0 > 0$ and $w:{[0,\infty)}\to {[0,\infty)}$ is increasing 
and concave. Hence, the equilibrium concentrations are given by 
$ w_i(u) = \beta_i (w(u) + w_0)$. 
From $\bfy \coleq -\rmD_\bfc S(\bfc,u)$, we find 
\[
		c_i = (w(u) {+} w_0) \beta_i \ee^{y_i}.
\]
Using this, we derive for $v \coleq -\rmD_u S(\bfc,u)$ the relation
\[
v =- \beta_0 w'(u) - \sum_{i=1}^{I}
\frac{c_i}{w(u) {+} w_0} w'(u) = - w'(u) B(\bfy) \ \text{ with } \ 
B(\bfy) \coleq \beta_0 + \sum_{i=1}^{I} \beta_i \ee^{y_i}.
\]
Because of $\sum_{i=1}^{I} c_i = (w(u)+w_0) \big(B(\bfy){-}\beta_0\big)$, 
we also find for the temperature 
\[
\theta = \Theta(\bfc,u) = \frac{w(u)+w_0}{w'(u)\big(\beta_0 (w(u)+w_0) 
	+ \sum_{i=1}^{I} c_i\big)} = \frac{1}{w'(u) B(\bfy)}.
\]
The Legendre transform $H^* \coleq \mathfrak L(-S)$ can be calculated
explicitly by using $w^* \coleq \mathfrak L(-w)$ in the form 
\[
	H^*(\bfy,v) = B(\bfy) w^* \Big( \frac{v}{B(\bfy)} \Big) 
    + \big(  B(\bfy){-}  \beta_0 \big) w_0 
       \quad \text{for}\ \ v < 0
\]
and $H^*(\bfy,v) = \infty$ for $v \geq 0$. 
In the case $w(u) \coleq u^\alpha/\alpha$ with $\alpha \in {(0,1)}$, we find 
\[
w^*(v)= \frac{1{-}\alpha}\alpha (-v)^{-\alpha/(1-\alpha)} 
	\quad \text{and} \quad 
		H^*(\bfy,v) = \frac{1{-}\alpha}\alpha 
        \frac{B(\bfy)^{1/(1-\alpha)}}{(-v)^{\alpha/(1-\alpha)}} 
        + \big( B(\bfy) - \beta_0 \big) w_0.
\]
Defining the convex function $b:\R\to \R$ via $b(\mu) \coleq B({-}\mu \bfq)$, 
we find the convex reduced dual entropy 
\[
\ol{H^*}(\mu,\eta) = H^*(-\mu \bfq, -\eta) 
   = \frac{1{-}\alpha}\alpha \: 
   \frac{b(\mu)^{1/(1-\alpha)}}{\eta^{\alpha/(1-\alpha)}} 
        + \big( b(\mu) - \beta_0 \big) w_0.
\]
The dependence of the various entropy functions on each other 
is depicted in Figure \ref{fig:entropies}. 
\end{example}

\begin{example}[Entropies with additive free energies]
\label{ex:second-entropies}
As in \cite{AlGaHu02TDEM}, we can generate concave entropies $(\bfc,u)\mapsto
S(\bfc,u)$ by starting from a free energy $F(\bfc,\theta)$, where $\theta$ is
the temperature. The main assumption is that the function $\bfc \mapsto F(\bfc,\theta)$ 
is convex for fixed $\theta$ and that the mapping $\theta \mapsto F(\bfc, \theta)$ 
is strictly concave for fixed $\bfc$, which is the same as
saying that the heat capacity $\rmC_\text{heat}(\bfc,\theta) \coleq - 
\theta \pl_\theta^2 F(\bfc,\theta)$ is positive.  

The free entropy $-F(\bfc,\theta)/\theta$ can be
written in terms of the variable $v = -1/\theta < 0$, which is thermodynamically
conjugate to the internal energy $u$. We see that the function 
\[
\wt H(\bfc, v) \coleq - v \, F \Big( \bfc,-\frac{1}{v} \Big) \quad\text{for }v<0 
\] 
is still concave in $v$ and convex in $\bfc$. Doing a partial
Legendre transform in $\bfc$, we see that 
\[
H^*(\bfy,v) = \sup\bigset{ \bfy{\cdot}\bfc - \wt H(\bfc,v)}{ \bfc \in
  [0,\infty)^I }
\]
is a convex functional by the standard fact that the partial Legendre transform of
a convex--concave functional is jointly convex. 
Setting $H^*(\bfy,v) \coleq \infty$ for $v>0$, 
we find the corresponding entropy density function 
\[
S(\bfc,u) = -H(\bfc,u) \ \text{ with } \ H(\bfc,u)= \sup\bigset{
  \bfy{\cdot}\bfc + vu - H^*(\bfy,v)}{ (\bfy, v) \in \R^{I+1} }.
\] 
By performing suitable partial Legendre transforms, one sees that this definition is
consistent with the standard definition 
$\wt S(\bfc,\theta) \coleq -\pl_\theta F(\bfc,\theta)$. 

In electrochemistry, it is often assumed that $F$ has the form 
\begin{align}
\label{eq:additive-free-energy}
F(\bfc,\theta) = F_0(\theta) + \sum_{i=1}^I F_i(c_i,\theta),
\end{align}
see also \cite{AlGaHu02TDEM}. In that case, our main function 
$\ol{H^*}(\mu,\eta) = H^*(-\mu\bfq, -\eta)$ again has an additive form, namely 
\begin{align}
\label{eq:additive-dual-entropy}
H^*(\bfy,v) = v F_0 \Big( {-}\frac{1}{v} \Big) + \sum_{i=1}^I 
(-v) \wh F_i \Big( {-}\frac{y_i}{v}, -\frac1v \Big), \ \text{ where } \
\wh F_i(z,\theta) \coleq \mathfrak L \big[ F_i(\cdot, \theta)\big](z). 
\end{align}
In the simplest case, one assumes $F_i(c_i,\theta)=\theta G_i(c_i)$, then 
one obtains the separated case in \eqref{eq:S.separated}. For Fermi 
statistics (as in \cite{AlGaHu02TDEM}),
one has $\partial_z \wh F_i(z,\theta) = a_i(\theta) \calF_\alpha 
(b_i(\theta) z)$ where $\calF_\alpha$ is the Fermi--Dirac integral satisfying 
$   \calF_\alpha(z) \sim z^{\alpha+1}$ for $z\to \infty$. 

With this, it is easy to derive lower bounds for $\ol{H^*}(\mu,\eta) = H^*(-\mu \bfq, - \eta)$ 
from  upper bounds for $F_i$ or lower bounds for $\wh F_i$.  
\end{example}

We now comment on a situation that is not covered by our assumption, 
because we ask $H(\bfc,u) = -S(\bfc,u) \in \R$ for all $(\bfc,u) \in \ol\sfD =
[0,\infty)^{I+1}$. In several applications, one is interested in the case 
that the concentrations are bounded from above, which means that $H(\bfc,u) 
=+\infty$ for the values that are not allowed. We consider the case of 
exclusion processes where $\sum_{i=1}^I c_i$ is restricted and the case 
where each species satisfies an individual restriction. Without going into 
detail, we see that it should be possible to generalize the direct method 
to these cases without to much effort, whereas the application of the 
dual Lagrangian approach seems to be much more difficult, because the 
reduced density $\ol{H^*}$ has linear growth only, such that the coercivity 
of the convex functional $\calK$ introduced in Section \ref{eq:k-functional} is no 
longer guaranteed.  The main reason for the additional technicalities arises from 
the fact that the total charge $Q_0$ is no longer allowed to vary in all of $ \R$, 
but needs to be restricted by the bounds arising from the bounds for $\bfc$. In the 
direct method this easily translates into the condition that the admissible set
$ M_{E_0,Q_0}$ is nonempty, whereas in the Lagrangian case it is clear that
coercivity fails for large $|Q_0|$ and it needs some effort 
to establish coercivity for small $| Q_0| $.  

\begin{example}[Size exclusion processes]
\label{ex:size-exclusion-direct}
Size exclusion models (see, e.g., \cite{OttE97TDIF}) can be considered in the global approach outlined in
Subsection~\ref{su:DirectMethod} by slightly adjusting the set-up. Specifically
for Boltzmann exclusion models, one can consider the negative entropy function 
$H_{\textrm{\rm excl}}(\bfc,u)$ defined via 
\begin{align}\label{eq:h.size-excl}
   H_{\textrm{\rm excl}}(\bfc,u) \coleq -w(u)+\sum_{i=0}^I w(u)\mathfrak{b} 
   \Big( \frac{c_i}{w(u)} \Big),\quad\text{where }c_0:=w(u)-\sum_{i=1}^Ic_i,
\end{align}
for $(\bfc, u) \in \sfD \coleq \{(\boldsymbol{c},u)\in(0,\infty)^{I+1} \mid \sum_{i=1}^I 
c_i < w(u)\}$ and $H_{\textrm{\rm excl}}(\bfc,u):=+\infty$ 
for $(\bfc,u)\not\in\overline{\sfD}$.
Here, we employ $\mathfrak{b}(s) \coleq s\log s-s+1$ and $w\in C^2$ as in 
Example~\ref{ex:entropies}. 
Notice that the closed set $\ol\sfD$ 
is convex in $(\bfc,u)=(c_1,\dots,c_n,u)$, since $w$ is concave, and, using 
the monotonicity $w'\ge0$ and imposing {\em strict} concavity $w''<0$,  
a calculation similar to that in~\cite{MieMit18CEER} shows that the entropy
function $H_{\textrm{\rm excl}}$ satisfies 
the required convexity property. Therefore, the Boltzmann exclusion 
entropy satisfies a set of hypotheses analogous to
\ref{hp:h.pr.cx.lsc}--\ref{hp:S.infinity} with the adjusted form of $\sfD$. 
Thus, the global optimization by the direct method (cf.\ Theorem \ref{thm:ex-uniq.direct}) should equally apply 
to entropies of the form~\eqref{eq:h.size-excl}.

As in Example \ref{ex:entropies}, we can calculate the dual entropy 
$H_\mathrm{excl}^*$ 
by applying the Legendre transform to $H_\mathrm{excl}$. This entails 
\begin{align*}
   H_\mathrm{excl}^*(\bfy, v) = B_\mathrm{excl}(\bfy) w^* 
   \Big( \frac{v}{B_\mathrm{excl}(\bfy)} \Big) 
    \ \text{ with } \ B_\mathrm{excl}(\bfy) \coleq \beta_0 - I + \log \Big( 1 + \sum_{j=1}^I \ee^{y_j} \Big).
\end{align*}
Choosing $w(u) = u^\alpha/\alpha$ with $\alpha \in {(0,1)}$ as in 
Example \ref{ex:entropies},  we see that the convex reduced dual entropy 
	\[
		\ol{H_\mathrm{excl}^*}(\mu,\eta) = H_\mathrm{excl}^*(-\mu \bfq, -\eta) 
        = \frac{1{-}\alpha}\alpha \: \frac{b_\mathrm{excl}(\mu)^{1/(1-\alpha)}}{\eta^{\alpha/(1-\alpha)}} 
	\]
    with $b_\mathrm{excl}:\R\to \R$, $b_\mathrm{excl}(\mu) \coleq B_\mathrm{excl}({-}\mu \bfq)$ as in Example \ref{ex:entropies} 
    does \emph{not} satisfy condition \eqref{eq:existence-bound-mfs*} as we are exactly in the 
    borderline case $\frac\qq{1{+}\pp} = 1$. 
\end{example}

\begin{example}[Individually bounded concentrations]
\label{ex:IndivUpperBd}
    Following the setup in Example \ref{ex:second-entropies}, we end up in a similar situation. 
    If we start from a free energy as in \eqref{eq:additive-free-energy}, then each 
    individual free energy $F_i( \cdot, \theta)$ is defined on a bounded domain for 
    any $\theta > 0$. As a consequence, $\wh F_i(z ,\theta)$ grows linearly for $z \rightarrow \infty$ 
    and fixed $\theta > 0$, which results in a linear growth of $\ol{H^*}(\mu,\eta)$ for 
    $|\mu| \rightarrow \infty$ and fixed $\eta > 0$ according to \eqref{eq:additive-dual-entropy}. 
    Thus, the Lagrangian method is \emph{not} applicable in this case, while we expect that the direct method is still feasible. 

To be more specific, we follow Example \ref{ex:second-entropies} and consider additive free 
energies as in \eqref{eq:additive-free-energy} with $F_i(c_i,\theta) = 
\theta^a \mathfrak f_i( c_i)$ for some $a \geq 1$ and convex functions 
$\mathfrak f_i$ with $\mathfrak f_i(\zeta) \coleq +\infty$ for $\zeta \not\in 
[0,\varsigma_i]$. E.g., the choice $\mathfrak f_i(\zeta) \coleq \mathfrak b(\zeta)
+ \varsigma_i \mathfrak b(1{-}\zeta/\varsigma_i)$ with $\mathfrak b$ as in 
Example \ref{ex:size-exclusion-direct} leads to a family of 
Blakemore statistics, which contains the Fermi--Dirac statistics with 
$\varsigma_i=1$, see \cite{FaKoFu17CACF}. 
Under the assumption that $F_0$ is strictly concave, we can proceed as in Example 
\ref{ex:second-entropies} and obtain the convex functions $H(\bfc,u)=-S(\bfc,u)$ as 
well as 
\[
H^* (\bfy,\eta) = -\eta F_0 \Big( \frac{1}{\eta} \Big) + \sum_{i=1}^I \eta^{a-1}  \, 
\mathfrak f^*_i\big({-} \eta^{a-1} y_i \big).
\]
The case $ a = 1$ is the separated case, and $a=2$ leads to the classical Fermi--Dirac 
statistics with $c_i =(\mathfrak f^*_i)' ( - y_i/\theta)$. The dual functions $\mathfrak f_i^*$ 
have only linear growth, hence $\ol{H^*}$ cannot grow faster in $\mu$ than linear. 

Hence, the dual 
Lagrangian approach is obstructed again; however, as in the previous example, the direct method might still be applicable. 
\end{example}

\section{Lagrangian analysis for critical points}
\label{se:proofs-LagrangianApproach}
	
In this section, we settle the existence, uniqueness, and regularity 
	of critical points and local/global maximizers of the total entropy $\calS$ 
    under the constraints \eqref{eq:constraints} as stated in Theorem 
    \ref{thm:existence-uniqueness}, Corollary \ref{cor:equilibrium}, and 
    Theorem \ref{th:GlobEquilibrium}. 
	We first introduce an appropriate Lagrange functional $\calL$ related to 
	the constrained maximization of $\calS$. 
	The critical points of $\calL$ can be characterized as minimizers 
    of a convex functional $\calK$. We therefore arrive at a convex minimization 
	problem, which admits a unique solution (cf.\ Proposition \ref{prop:k-minimizer}).

\subsection{Lagrangian multiplier equations for critical points}
\label{su:Lagrange}
	
	According to Definition \ref{def:equilibrium}, we first look for a local 
    maximizer of the entropy functional $\calS(\bfc, u)$ in $\FF$ 
	under the constraints of charge and energy conservation. The standard approach 
	for solving this kind of constrained optimization problems is the Lagrangian 
	method (see, e.g., \cite{Trol10OCPD}), which, roughly speaking, relies on the 
	following principle: at the position of any constrained local extremum of the 
	objective functional, its derivative is a linear combination of the derivatives 
	of the constraint functions. The Lagrange function is constructed in such a way 
	that the constrained local extrema coincide with its critical points. 
The constrained maximization problem is the following:
\[
\text{maximize } \ \calS(\bfc,u) \quad \text{subject to} \ \ 
\frac12 \calB(\psi_{\bfc} {+} \psiext) + \int_\Omega u \dd x = E_0 
\ \ \text{and} \ \int_\Omega \bfq {\cdot} \bfc \dd x = Q_0,  
\]
where the functional $\calB$ is defined in \eqref{eq:def-b-func}, and where 
the electrostatic potential $\psi_{\bfc}$ is the solution to \eqref{eq:poisson-parts.1}. 
All together, we have the constraints 
	\begin{subequations}
		\label{eq:lagrange-constraints}
		\begin{align}
			\mathfrak E(u, \psi) &\coleq \frac12\calB(\psi {+} \psiext) + \int_\Omega u \dd x = E_0, 
			\label{eq:lagrange-constraint-energy} \\
			\mathcal Q(\bfc) &\coleq \int_\Omega \bfq {\cdot} \bfc \, \dd x = Q_0, 
			\label{eq:lagrange-constraint-charge} \\ 
			\varPi_\Omega(\bfc, \psi) &\coleq\vphantom{\int_\Omega}\DIV(\eps\nabla \psi) + \bfq{\cdot}\bfc = 0 
			\quad \text{in} \ \Omega, \label{eq:lagrange-constraint-poisson-volume} \\ 
			\varPi_{\Gamma_\rmR}(\bfc, \psi) &\coleq\vphantom{\int_\Omega} 
			\varepsilon \nu {\cdot} \nabla \psi + \omega \psi = 0 
			\quad \text{on} \ \Gamma_\rmR. \label{eq:lagrange-constraint-poisson-surface}
		\end{align}
	\end{subequations}

	Note that we have expressed the charge and energy functionals from 
	\eqref{eq:constraints} in terms of a different set of variables, and that 
	Poisson's equation in $\Omega$ and the boundary condition on $\Gamma_\rmR$ 
	are included as separate constraints on the independent variables $\bfc$ and $\psi$. 
	The four constraints on $(\bfc, u) \in \FF$ and $\psi \in 
	\HH$ in \eqref{eq:lagrange-constraints} give rise to 
    Lagrange multipliers $\eta \in \bbR$ for \eqref{eq:lagrange-constraint-energy}, 
	$\kappa \in \R$ for \eqref{eq:lagrange-constraint-charge}, and 
	$\lambda \in \HH$ for \eqref{eq:lagrange-constraint-poisson-volume} and \eqref{eq:lagrange-constraint-poisson-surface}. 
	The Lagrange functional $\calL : \FF_\calL \rightarrow \bbR$ 
	is now defined as 
	\begin{subequations}
	\begin{align}
		\calL(\bfc,u,\psi,\eta,\kappa,\lambda) &\coleq \calS(\bfc,u) 
		+ \big( E_0 - \mathfrak E(u, \psi) \big) \eta 
		+ \big( Q_0 - \mathcal Q(\bfc) \big) \kappa \nonumber \\ 
		&\qquad + \int_\Omega \big( \varepsilon \nabla \psi \cdot \nabla \lambda 
		- \bfq {\cdot} \bfc \, \lambda \big) \dd x 
		+ \int_{\Gamma_\rmR} \omega \psi \lambda \dd a \label{eq:lagrange-functional}
	\\ 
		\text{with }\label{eq:l-domain}
		\FF_\calL &\coleq \FF \times \HH 
		\times (0,\infty) \times \R \times \HH. 
	\end{align}    
	\end{subequations}
 
	The derivatives with repect to the Lagrange multipliers $\eta$ and $\kappa$ give rise to 
	the energy and charge constraints, respectively, whereas $\rmD_\lambda \calL = 0$ 
	leads to both Poisson's equation and the related Robin boundary condition: 
	\begin{subequations}
		\label{eq:lagrange-eta-kappa-lambda}
		\begin{align}
			\rmD_\eta \calL = 0 \qquad &\Leftrightarrow \qquad \mathfrak E(u, \psi) = E_0, \label{eq:lagrange-eta} \\ 
			\rmD_\kappa \calL = 0 \qquad &\Leftrightarrow \qquad \mathcal Q(\bfc) = Q_0, \label{eq:lagrange-kappa} \\ 
			\rmD_\lambda \calL = 0 \qquad &\Leftrightarrow \qquad 
			\begin{cases}
				\varPi_\Omega(\bfc, \psi) &= 0, \\ 
				\varPi_{\Gamma_\rmR}(\bfc, \psi)\mquad &= 0. 
			\end{cases} 
			\label{eq:lagrange-lambda}
		\end{align}
	\end{subequations}	
	Finally, the derivative of $\calL$ w.r.t.\ $(\bfc, u)$ leads to the representation of 
	$\rmD S(\bfc, u)$ as a linear combination of the derivatives of the constraints, 
	while an elementary calculation shows the validity of the second equivalence: 
	\begin{align}
		\rmD_{(\bfc,u)} \calL = 0 \quad &\Leftrightarrow \quad \rmD S(\bfc, u) = 
		\begin{pmatrix}
			(\kappa {+} \lambda) \bfq \\ 
			\eta
		\end{pmatrix}, \label{eq:lagrange-c-u} \\ 
		\rmD_\psi \calL = 0 \quad &\Leftrightarrow \quad 
        \lambda = \eta (\psi + \psiext). \label{eq:lagrange-psi}
	\end{align}
    The equations represented by $\rmD \calL = 0$ are called the Lagrange equations 
    for maximizing the total entropy $\calS(\bfc, u)$ among 
    $(\bfc, u, \psi) \in \FF \times \HH$ 
    satisfying the charge and energy constraints as well as Poisson's equation. 
	We see that the \emph{positive constant} $\eta$ plays the role of the inverse of a 
	positive constant temperature, i.e., $\eta = 1/\theta>0$ 
	because $1/\theta = \rmD_u S(\bfc,u)>0$ due to Hypotheses \ref{hypo:entropy}.

\subsection{Nonrigorous derivation of \texorpdfstring{$\calK$}{K}}
\label{su:Derive.whK}

We proceed with the formal derivation of an auxiliary functional $\calK$, 
which will be shown to be coercive in Subsection \ref{su:calK.coercive}. 
Exploiting the coercivity and the strict convexity of $\calK$, 
we will subsequently be able to prove 
the existence of a unique minimizer of $\calK$. 

We rewrite the Lagrange functional from \eqref{eq:lagrange-functional} as 
\begin{align*}
\calL(\bfc,u,\psi,\eta,\kappa,\lambda) &\coleq  \calS(\bfc,u) 
+ \eta \Big( E_0 - \int_\Omega u \dd x - \frac12\calB(\psi{+}\psiext)\Big)
+ \kappa\Big(Q_0 - \int_\Omega \bfq {\cdot} \bfc \dd x  \Big) \\
&\quad + \bbB(\psi,\lambda) - \int_\Omega \bfq {\cdot} \bfc \, \lambda \dd x.  
\end{align*}
Recall that $\eta$ is the Lagrange parameter for the energy constraint, $\kappa$ for
the charge constraint, and $\lambda \in \HH$ 
for guaranteeing the constraint $\psi = \psi_{\bfc}$. 

It is easy to see that $\calL$ is concave in $(\bfc,u,\psi)$, while it is
affine, and hence convex, in $(\eta,\kappa,\lambda)$. Taking the infimum over
the latter variables, we obtain the value $-\infty$ if one of the constraints
is not satisfied. If the constraints are satisfied, then we can maximize over
$(\bfc,u,\psi)$ and obtain exactly the desired maximal value for $\calS$,
namely
\begin{align*}
&\sup \Big\{ \calS(\bfc,u) \ \Big| \ \int_\Omega u \dd x 
+ \frac12\calB(\psi_{\bfc}{+}\psiext) 
= E_0, \ \int_\Omega \bfq {\cdot} \bfc \dd x = Q_0 \Big\} \\ 
&\quad = \sup_{(\bfc,u,\psi)} \Big( \inf_{ (\eta,\kappa,\lambda)} 
\calL(\bfc,u,\psi,\eta,\kappa,\lambda) \Big).
\end{align*} 
In qualified cases (see, e.g., \cite[Chap.~VI]{EkeTem76CAVP}), one can
interchange the order of the supremum and the infimum. Here, we do this without
justification, because we will check a posteriori that the minimizers
$(\eta,\kappa,\lambda)$ constructed in Subsection \ref{su:exist-minimizers} lead to the
desired solution of the original equilibrium problem (cf.\ Corollary \ref{cor:equilibrium} 
and Theorem \ref{th:GlobEquilibrium}). 
Thus, we are led to look at 
\[
\inf_{ (\eta,\kappa,\lambda)} \calK(\eta,\kappa,\lambda) \quad \text{with} \quad 
\calK(\eta,\kappa,\lambda) \coleq  \sup_{(\bfc,u,\psi)} \calL(\bfc,u,\psi,\eta,\kappa,\lambda). 
\]
The point is that $\calK$ can be calculated  explicitly by using the Legendre
transform $H^*$ of the convex function $ (\bfc,u)\mapsto H(\bfc,u)=-
S(\bfc,u)$. Indeed, the supremum over $(\bfc,u)$ features $H^*$, and completely
independently, one can maximize the quadratic functional in 
$\psi \in \HH$. This
leads to the functional $\calK : \FF_\calK \rightarrow \bbR\cup\{\infty\}$,
\begin{subequations}
\begin{align}
\label{eq:k-functional}
\calK(\eta, \kappa, \lambda) &\coleq \int_\Omega H^*\big({-}(\kappa{+}\lambda)\bfq, -
\eta\big) \dd x + \kappa \,Q_0 + \eta\,E_0 - \bbB(\lambda,\psiext) +
\frac1{2\eta} \calB(\lambda) \\ 
\text{with }\label{eq:k-domain} 
		\FF_\calK &\coleq \vphantom{\int_\Omega} 
		(0,\infty) \times \R \times \HH. 
\end{align}
\end{subequations}

By showing 
that $\calK$ admits a unique minimizer $(\eta^\ast, \kappa^\ast, \lambda^\ast)$ 
with continuous $\lambda^\ast$ (cf.\ Proposition \ref{prop:k-minimizer}), 
and by establishing a bijection between critical points of $\calK$ and $\calL$ 
(cf.\ Proposition \ref{prop:CharEquil}), 
we arrive at a unique solution of the Lagrange equations 
\eqref{eq:lagrange-eta-kappa-lambda}--\eqref{eq:lagrange-psi}. 
Observe that $\calK$ is convex, 
which follows from the convexity of the dual entropy 
$H^*$ and the convexity of $(\xi_1,\xi_2) \mapsto |\xi_1|^2/\xi_2$ 
as a function from $\mathbb R^m \times (0, \infty)$ into $\mathbb R$ 
for any $m \in \N$.

\subsection{Coercivity of \texorpdfstring{$\calK$}{K}}
\label{su:calK.coercive}

Below, we will show that $\calK$ is strictly convex and even coercive 
on $\FF_\calK$ for $E_0 > \mathrm V(Q_0,\psi_\mathrm{ext})$; hence, 
there is a unique critical point, which is the global minimizer of $\calK$. 
To simplify the notation, we introduce (resp.\ recall) the convex 
and non-negative functions 
\[
\mfHS(\mu,\eta) \coleq \int_\Omega \ol{H^*}(\mu(x),\eta(x))\dd x \qquad \text{and} \qquad \ol{H^*}(\mu,\eta) \coleq H^*(-\mu\bfq,-\eta). 
\]

The following elementary lemma will be useful for showing coercivity of $\calK$. 
It generalizes the often used estimate $|\mu|^2/\eta\geq 2\eps |\mu| - \eps^2 \eta$. 

\begin{lemma}
\label{le:qq.pp.estim} Fix positive  $\pp$ and $\qq$, then
for each $\delta>0$, there exists some $c_\delta>0$ such that 
\[
 \frac{|\mu|^\qq}{\eta^\pp} \geq c_\delta |\mu|^{\qq/(1+\pp)} - \delta \eta \quad
 \text{for all } (\mu,\eta) \in \R\ti \R_{>0} \:. 
\]
\end{lemma}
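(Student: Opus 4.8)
The plan is to recognize the claimed inequality as a weighted Young (arithmetic--geometric mean) inequality in disguise. Since both sides depend on $\mu$ only through $|\mu|$, I would first set $x\coleq|\mu|\geq 0$ and dispose of the trivial case $x=0$; for $x>0$ it then suffices to establish the equivalent lower bound
\begin{align*}
\frac{x^\qq}{\eta^\pp} + \delta\,\eta \geq c_\delta\, x^{\qq/(1+\pp)}
\qquad\text{for all }\eta>0.
\end{align*}

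The key step is to apply the two-term weighted AM--GM inequality $\theta_1 a_1 + \theta_2 a_2 \geq a_1^{\theta_1} a_2^{\theta_2}$ (valid for $a_i>0$, $\theta_i>0$, $\theta_1+\theta_2=1$) to the quantities $a_1 = \theta_1^{-1}\, x^\qq/\eta^\pp$ and $a_2 = \theta_2^{-1}\,\delta\eta$, so that $\theta_1 a_1 + \theta_2 a_2$ is exactly the left-hand side above. The weights are forced by the requirement that the powers of $\eta$ cancel in the geometric mean: since $(x^\qq/\eta^\pp)^{\theta_1}(\delta\eta)^{\theta_2} = \delta^{\theta_2} x^{\qq\theta_1}\eta^{\theta_2-\pp\theta_1}$, I need $\theta_2 = \pp\,\theta_1$, which together with $\theta_1+\theta_2=1$ pins down $\theta_1 = 1/(1+\pp)$ and $\theta_2 = \pp/(1+\pp)$. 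With this choice the $\eta$-dependence disappears and one reads off
\begin{align*}
\frac{x^\qq}{\eta^\pp} + \delta\,\eta \geq (1{+}\pp)^{\theta_1}\Big(\tfrac{1+\pp}{\pp}\Big)^{\theta_2}\,\delta^{\pp/(1+\pp)}\, x^{\qq/(1+\pp)},
\end{align*}
so that $c_\delta \coleq (1{+}\pp)^{\theta_1}\big((1{+}\pp)/\pp\big)^{\theta_2}\,\delta^{\pp/(1+\pp)}>0$ does the job.

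There is no genuine analytic obstacle here; the only point requiring care is the choice of weights, which is dictated by the cancellation of $\eta$, together with the bookkeeping of the resulting constant. As a sanity check that the statement is sharp in the intended way, the special case $\pp=1$, $\qq=2$ yields $\qq/(1+\pp)=1$, $\theta_1=\theta_2=1/2$, and $c_\delta = 2\sqrt{\delta}$, which recovers the quoted estimate $|\mu|^2/\eta \geq 2\ve|\mu|-\ve^2\eta$ upon setting $\delta=\ve^2$. I would also remark that $c_\delta$ necessarily degenerates as $\delta\to 0$ (here like $\delta^{\pp/(1+\pp)}$), reflecting the fact that the unperturbed bound $x^\qq/\eta^\pp \geq c\, x^{\qq/(1+\pp)}$ cannot hold uniformly in $\eta$.
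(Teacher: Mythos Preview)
Your proof is correct and is essentially the same argument as the paper's: both apply the weighted AM--GM/Young inequality with exponents $1/(1{+}\pp)$ and $\pp/(1{+}\pp)$ so that the $\eta$-powers cancel, and both arrive at the identical constant $c_\delta=(1{+}\pp)(\delta/\pp)^{\pp/(1+\pp)}$. The only cosmetic difference is that the paper starts from the product factorization of $|\mu|^{\qq/(1+\pp)}$ and bounds it above, whereas you start from the sum $x^\qq/\eta^\pp+\delta\eta$ and bound it below.
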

\begin{proof} We apply Young's inequality in the form 
\[
|\mu|^{\qq/(1+\pp)} = \Big(a\tfrac{\ds |\mu|^\qq}{\ds \eta^\pp}
\Big)^{1/(1+\pp)} \, \Big( \tfrac{\ds\eta}{\ds a^{1/\pp}}\Big) ^{p/(1+\pp)}
\leq \frac a{1{+}\pp}\,\tfrac{\ds |\mu|^\qq}{\ds \eta^\pp} + \frac\pp{1{+}\pp}
\tfrac{\ds\eta}{\ds a^{1/\pp}}. 
\]
Thus, adjusting $a$ according to $\delta$, we find $c_\delta= (1{+}\pp)
\big(\delta/\pp\big)^{\pp/(1+\pp)}>0$. 
\end{proof}

Combining \eqref{eq:existence-bound-mfs*} and Lemma \ref{le:qq.pp.estim} will provide a
bound for the superlinear term $|\mu|^{\qq/\pp}$ where $\mu=\lambda{+}\kappa$. 

We are now ready to state the following coercivity result, where we use the
bound $\rmV(Q_0,\psiext)\geq 0$ for $\rmE(\bfq{\cdot}\bfc,\psiext)$ derived in Proposition
\ref{pr:MinElectroEner}. Note that $\rmV(Q_0,\psiext)=0$ in the case 
$\calH^{d-1}(\Gamma_\rmD)>0$ as well as in the pure Neumann case, 
however, for the pure Robin case, $\rmV(Q_0,\psiext)$ is nontrivial. 
We conjecture that $\calK$ is coercive even 
in the case of purely non-negative charge carriers $q_i \geq 0$, $i = 1, \dotsc, I$, 
where the reduced dual entropy $\ol{H^*}$ 
only fulfills the weaker hypothesis \eqref{eq:existence-bound-alternative}. 
In this situation, the more stringent 
lower bounds on $E_0 > \rmV_\geq(Q_0, \psiext)$ mentioned 
in Remarks \ref{rem:rmV-geq} and \ref{rem:uniqueness} and $Q_0 \geq 0$ should still guarantee the coercivity of $\calK$.

\begin{proposition}[Coercivity of $\calK$] 
\label{prop:Coerc.calK}
Assume that \eqref{eq:existence-bound-mfs*} and
\eqref{eq:Assum.psiexst} hold.  
Then, for all $(Q_0,E_0)$ satisfying $E_0> \rmV(Q_0,\psiext)$ (cf.\ Proposition 
\ref{pr:MinElectroEner}), the functional $\calK$
is coercive, namely, there exist positive constants $c_1$ and $C_1$ such that 
\begin{equation}
\label{eq:coerc.K}
\calK(\eta,\kappa,\lambda) \geq c_1 \Big( \| \lambda\|_{\rmH^1} + \frac1{\eta^\pp} + \eta
+ |\kappa| \Big) -C_1 \quad \text{for all } (\eta,\kappa,\lambda) \in \R_{>0}\ti
\R \ti \HH. 
\end{equation}
\end{proposition}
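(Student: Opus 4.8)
The goal is the coercivity bound \eqref{eq:coerc.K} for
\[
\calK(\eta,\kappa,\lambda) = \mfHS(\kappa{+}\lambda, \eta) + \kappa Q_0 + \eta E_0 - \bbB(\lambda,\psiext) + \tfrac1{2\eta}\calB(\lambda).
\]
The strategy is to bound each of the four quantities $\|\lambda\|_{\rmH^1}$, $1/\eta^\pp$, $\eta$, and $|\kappa|$ from above by $\calK$ plus a constant, by carefully exploiting the different terms of $\calK$. The main tension is that the "bad" indefinite term $-\bbB(\lambda,\psiext)$ and the charge term $\kappa Q_0$ must be absorbed, and the superlinear growth \eqref{eq:existence-bound-mfs*} of $\ol{H^*}$ in the combined variable $\mu = \kappa{+}\lambda$ must be converted into separate control of $\kappa$ and of $\lambda$. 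I expect the interplay between the $\rmL^1$-type control coming from $\mfHS$ and the $\rmH^1$-coercivity of $\calB$ to be the heart of the argument.

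\emph{First I would handle the quadratic part in $\lambda$.} Since $\calB$ is coercive on $\HH$, I have $\calB(\lambda) \geq \alpha_0 \|\lambda\|_{\rmH^1}^2$, so the term $\tfrac1{2\eta}\calB(\lambda)$ provides positive control. The obstruction is the indefinite cross term $-\bbB(\lambda,\psiext)$; here I would invoke precisely the hypothesis \eqref{eq:Assum.psiexst}, namely $|\bbB(\lambda,\psiext)| \leq C\|\lambda\|_{\rmL^1}^{1-\vartheta}\|\lambda\|_{\rmH^1}^\vartheta$. By Young's inequality this splits into a small multiple of $\|\lambda\|_{\rmH^1}$ (or $\|\lambda\|_{\rmH^1}^2/\eta$, matching the quadratic term after rescaling by $\eta$) plus a term controlled by $\|\lambda\|_{\rmL^1}$. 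The point of the $\rmL^1$ norm is that it can be dominated by $\mfHS$ via the growth bound: writing $\|\lambda\|_{\rmL^1} \leq \|\kappa{+}\lambda\|_{\rmL^1} + |\kappa|\,|\Omega|$ and using \eqref{eq:existence-bound-mfs*} together with Lemma~\ref{le:qq.pp.estim}, the $\rmL^1$-mass of $\mu = \kappa{+}\lambda$ is controlled by $\mfHS(\mu,\eta)$ up to a term $\delta\eta$.

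\emph{Next I would exploit the growth of $\mfHS$.} From \eqref{eq:existence-bound-mfs*}, pointwise $\ol{H^*}(\mu,\eta) \geq c_*(1+|\mu|^\qq)/\eta^\pp$, so integrating gives simultaneously a bound on $|\Omega|/\eta^\pp$ (yielding control of $1/\eta^\pp$, hence also a \emph{lower} bound preventing $\eta\to 0$) and a superlinear bound on $\|\mu\|_{\rmL^\qq}$-type quantities. Applying Lemma~\ref{le:qq.pp.estim} with the exponent condition $\qq/(1{+}\pp) > 1$ converts $|\mu|^\qq/\eta^\pp$ into $c_\delta|\mu|^{\qq/(1+\pp)} - \delta\eta$, a genuinely superlinear-in-$|\mu|$ term minus a controllable multiple of $\eta$. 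The superlinearity is essential: it means the $\rmL^1$-mass of $\mu$ (needed to absorb the $-\bbB(\lambda,\psiext)$ and $\kappa Q_0$ terms) costs strictly less than the gain from $\mfHS$, so everything closes. The choice of $\rmV(Q_0,\psiext)$ as the threshold enters exactly here: the term $\kappa Q_0 + \eta E_0$ combined with the optimal electrostatic energy recovers $\eta(E_0 - \rmV(Q_0,\psiext))$, which is strictly positive by hypothesis and supplies the coercive control of $\eta$ itself.

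\emph{Finally I would control $\kappa$ separately.} Having bounded $\|\mu\|_{\rmL^1} = \|\kappa{+}\lambda\|_{\rmL^1}$ and $\|\lambda\|_{\rmH^1} \geq c\|\lambda\|_{\rmL^1}$, the triangle inequality gives $|\kappa|\,|\Omega| = \|\kappa\|_{\rmL^1} \leq \|\kappa{+}\lambda\|_{\rmL^1} + \|\lambda\|_{\rmL^1}$, so $|\kappa|$ is controlled by the already-bounded quantities. The linear term $\kappa Q_0$ in $\calK$ is then absorbed since $|\kappa Q_0| \leq \tfrac12|\kappa| + C$ once $|\kappa|$ is dominated. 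Assembling these estimates with appropriately small $\delta$ and collecting constants yields \eqref{eq:coerc.K}. \textbf{The main obstacle} I anticipate is bookkeeping the chain of Young's-inequality splittings so that every $\delta\eta$ and every fractional power of $\|\lambda\|_{\rmH^1}$ generated along the way is reabsorbed into the genuinely coercive terms $\tfrac1{2\eta}\calB(\lambda)$, $\eta(E_0{-}\rmV)$, and the superlinear $\mfHS$-term — in particular ensuring the threshold $E_0 > \rmV(Q_0,\psiext)$ (rather than $E_0 \geq \rmV$) is what gives a strictly positive coefficient in front of $\eta$.
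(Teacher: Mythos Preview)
Your overall strategy is correct and parallels the paper's proof: both exploit the superlinear growth \eqref{eq:existence-bound-mfs*} via Lemma~\ref{le:qq.pp.estim}, the hypothesis \eqref{eq:Assum.psiexst} for the cross term $\bbB(\cdot,\psiext)$, and the quadratic coercivity of $\calB$. The gap lies in your treatment of $\kappa Q_0$ and the emergence of the \emph{sharp} threshold $\rmV(Q_0,\psiext)$.

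You make two inconsistent claims: first that ``$\kappa Q_0 + \eta E_0$ combined with the optimal electrostatic energy recovers $\eta(E_0 - \rmV(Q_0,\psiext))$'', with no mechanism offered; then that ``$\kappa Q_0$ is absorbed since $|\kappa Q_0| \leq \tfrac12|\kappa| + C$ once $|\kappa|$ is dominated''. The second, crude route does not yield the sharp threshold. Controlling $|\kappa|$ via $|\kappa|\,|\Omega| \leq \|\mu\|_{\rmL^1} + \|\lambda\|_{\rmL^1}$ and then absorbing $|Q_0|\,|\kappa|$ into the coercive terms forces, after tracing the Young-inequality losses in $\eta$, a condition of the form $E_0 > C(|Q_0|,\psiext)$ for a constant that has no reason to equal $\rmV(Q_0,\psiext)$. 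In the pure Robin case this is fatal, since $\rmV(Q_0,\psiext) = \tfrac1{2\calB(1)}\big(Q_0 + \bbB(1,\psiext)\big)^2$ is a specific quadratic expression in the data.

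The missing idea is the one the paper uses: shift to $\mu = \kappa + \lambda$, so that $\mfHS$ depends only on $(\mu,\eta)$, and then \emph{complete the square in $\kappa$}. The $\kappa$-dependence of $\calN(\eta,\kappa,\mu)\coloneqq\calK(\eta,\kappa,\mu{-}\kappa)$ is precisely
\[
\kappa\Big(Q_0 + \bbB(1,\psiext) - \tfrac1\eta\bbB(1,\mu)\Big) + \tfrac{\kappa^2}{2\eta}\calB(1),
\]
and completing the square leaves the non-negative remainder $\tfrac{\calB(1)}{2\eta}(\kappa - \kappa_*)^2$ while producing, in the $\kappa$-independent part $\calN_2(\eta,\mu)$, exactly the term $\eta\big(E_0 - \rmV(Q_0,\psiext)\big)$. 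This structural identity is what makes the hypothesis $E_0 > \rmV(Q_0,\psiext)$ the precise condition under which a strictly positive coefficient of $\eta$ survives after all absorptions; without it your bookkeeping cannot close at the stated threshold.
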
 
\begin{proof}
We define the shifted functional $\calN$ via 
$\calN(\eta,\kappa,\mu)\coleq \calK(\eta,\kappa, \mu{-}\kappa)$. Of course, $\calK$ is
coercive if and only if $\calN$ is coercive. We find the explicit expression
\begin{align*}
\calN(\eta,\kappa,\mu) &= \mfHS(\mu,\eta) - \bbB(\mu,\psiext) + \eta\, E_0 +
\frac1{2\eta} \calB(\mu)
\\
&\quad + \kappa\Big( Q_0 + \bbB(1,\psiext) - \frac1\eta \bbB(1,\mu) \Big) +
\frac{\kappa^2}{2\eta} \calB(1) 
\\
&=\calN_2(\eta,\mu) + \frac{\calB(1)}{2\eta} \big(\kappa
{-}\kappa_*(\mu,\eta)\big)^2 \quad \text{with} 
\\
\kappa_*(\mu,\eta)& =\frac1{\calB(1)} \Big( \bbB(1,\mu)- \eta\big(
Q_0+\bbB(1,\psiext)\big)\Big) \quad \text{and} 
\\
\calN_2(\eta,\mu) &= \mfHS(\mu,\eta) - \bbB(\mu,\psiext) + \eta E_0 +
\frac1{2\eta} \calB(\mu) - \frac{\calB(1)}{2\eta}\, \kappa_*(\mu,\eta)^2. 
\end{align*}
We note that $\calB(1)=\int_{\Gamma_\rmR} \omega \dd a >0$ by assumption and
that $\bbB(1,\phi) = \int_{\Gamma_\rmR} \omega \phi \dd a$. 
Reorganizing the terms in $\calN_2$ associated with $\eta^1$ and using the
definition of $\rmV(Q_0,\psiext)$ in Proposition \ref{pr:MinElectroEner}, we have 
\begin{align}
\calN_2(\eta,\mu) &= \mfHS(\mu,\eta) + 
\big( Q_0+\bbB(1,\psiext) \big) \bbB(1,\mu) - \bbB(\mu,\psiext)
\nonumber \\
&\quad + \eta\big( E_0 - \rmV(Q_0,\psiext)\big) +
\frac1{2\eta}\Big( \calB(\mu) - \frac1{\calB(1)} \bbB(1,\mu)^2\Big). 
\label{eq:n2-identity}
\end{align}

We are now ready to show coercivity. First, we observe that  
$\big( Q_0+\bbB(1,\psiext) \big) \bbB(1,\mu) \leq C \big| \int_{\Gamma_\rmR} \mu \dd a \big|$, 
where subsequently $C > 0$ and $c > 0$ denote sufficiently large and small constants, respectively. 
For $\rma >1$, we can use that the trace mapping from $\rmW^{1/\rma,\rma}(\Omega) \hookrightarrow \rmL^\rma(\pl\Omega) \hookrightarrow \rmL^1(\Gamma_\rmR)$ is continuous. Together with the Gagliardo--Nirenberg estimate, we derive 
\begin{align}
	\label{eq:mu-first-estimate}
	\| \mu \|_{\rmL^1(\Gamma_\rmR)}
    &\leq C \| \mu\|_{\rmL^\rma(\pl\Omega)} \leq C \| \mu\|_{\rmW^{1/\rma, \rma}(\Omega)} \leq C \|\mu\|_{\rmW^{1,\rma}}^{1/\rma} \| \mu\|_{\rmL^\rma}^{1-1/\rma}  \nonumber \\ 
	&\leq 
    \eps \|\mu\|_{\rmW^{1,\rma}} + C_\eps \| \mu\|_{\rmL^\rma}
    \leq  \eps \|\nabla \mu\|_{\rmL^\rma} +  C_\eps \| \mu\|_{\rmL^\rma},
\end{align}
where $\eps>0$ can be chosen arbitrarily, if $C_\eps$ is adjusted accordingly. Choosing $\rma \coleq \min\{ \qq/(1{+}\pp), 2\}$, 
equation \eqref{eq:mu-first-estimate} immediately entails 
\begin{align}
	\Big| \int_{\Gamma_\rmR} \mu \dd a \Big| 
    &\leq \eps \| \nabla \mu \|_{\rmL^2} 
    + C_\eps \|\mu\|_{\rmL^{\qq/(1+\pp)}}. 
	\label{eq:mu-estimate}
\end{align}
By assumption, we have 
\[
\Xi \coleq \frac{E_0 - \rmV(Q_0,\psiext)}{5} > 0.
\]
To derive a lower bound for \eqref{eq:n2-identity}, 
we can combine assumption \eqref{eq:existence-bound-mfs*} and Lemma
\ref{le:qq.pp.estim} for $\mfHS$, assumption \eqref{eq:Assum.psiexst} 
for the third term in \eqref{eq:n2-identity}, 
and estimate \eqref{eq:mu-estimate} to obtain
\begin{align*}
\calN_2(\eta,\mu) &\geq c_*|\Omega| \eta^{-\pp} + c_*C_\Xi \|
\mu\|_{\rmL^{\qq/(1+\pp)}}^{\qq/(1+\pp)} - \Xi \eta 
- C \| \mu\|_{\rmL^1}^{1-\vartheta} \| \mu\|_{\rmH^1}^\vartheta \\ 
&\quad + 5 \Xi \eta + \frac{\underline \eps}{2\eta} \|\nabla \mu\|_{\rmL^2}^2 
- c \| \nabla \mu \|_{\rmL^2} 
- C \|\mu\|_{\rmL^{\qq/(1+\pp)}},
\end{align*}
where we used $\int_{\Gamma_\rmR} \omega \dd a\int_{\Gamma_\rmR}
\omega \mu^2 \dd a \geq \big( \int_{\Gamma_\rmR}
\omega \mu \dd a  \big)^2$ for the last term in \eqref{eq:n2-identity}. 
Invoking Young's inequality  
to estimate $\| \mu\|_{\rmL^1}^{1-\vartheta} \| \mu\|_{\rmH^1}^\vartheta 
\lesssim \Xi \| \mu\|_{\rmH^1} + C_\Xi\|\mu\|_{\rmL^1}$ 
and applying Lemma \ref{le:qq.pp.estim}
(now with $\tilde\qq=2$ and $\tilde\pp=1$) to 
$\frac{\underline \eps}{2\eta} \|\nabla \mu\|_{\rmL^2}^2$ gives 
\begin{align*}
\calN_2(\eta,\mu) &\geq c_*|\Omega| \eta^{-\pp} + c_*C_\Delta \|
\mu\|_{\rmL^{\qq/(1+\pp)}}^{\qq/(1+\pp)} - C \| \mu\|_{\rmL^{\qq/(1+\pp)}} +
3\Xi \eta + c_3 \| \mu\|_{\rmH^1}.
\end{align*}
The crucial ingredient to the previous estimate is a bound of the form 
\begin{align}
\label{eq:n2-poincare}
\| \mu \|_{\rmH^1} \leq C \big( \| \nabla \mu \|_{\rmL^2} + \| \mu \|_{\rmL^1} \big). 
\end{align}
Such a bound follows, e.g., from the Gagliardo--Nirenberg inequality. 
Moreover, we can estimate 
\begin{align*}
\frac{(\kappa{-}\kappa_*)^2} {2\eta} 
&\geq c_4 \big|\kappa{-}\kappa_*(\mu,\eta) \big| -\Xi \eta \\ 
&\geq c_4 |\kappa| -c_4 C\big(\|\mu\|_{\rmH^1} +\eta\big) - \Xi \eta 
\geq c_4|\kappa| - \frac{c_3}2 \|\mu\|_{\rmH^1} -2\Xi \eta,
\end{align*}
if $c_4 $ is chosen sufficiently small. 
Adding this to the lower bound for
$\calN_2$ gives a lower bound for $\calN$. The condition $\qq>1{+}\pp$ gives a
superlinear coercive term that compensates the negative linear term in
$\|\mu\|_{\rmL^{\qq/(1+\pp)}}$, and thus the coercivity estimate
\eqref{eq:coerc.K} is established by recalling $\mu=\lambda{+}\kappa$. 
\end{proof}

\subsection{Existence and uniqueness of critical points of \texorpdfstring{$\calL$}{L}}
\label{su:exist-minimizers}
 
Having established the coercivity of $\calK$ on $\FF_\calK$, 
we are in a position to study the minimization problem for $\calK$. 
 
	\begin{proposition}[Existence, uniqueness, and regularity of minimizers of $\calK$]
		\label{prop:k-minimizer}$ $\\
		Suppose that Hypotheses \ref{hypo:entropy} 
        and the assumptions of Proposition \ref{prop:Coerc.calK} are in place. 
		Then, there exists a unique minimizer $(\eta^\ast, 
		\kappa^\ast,\lambda^\ast)$ of $\mathcal K$ 
		on $\FF_\calK$. It has the regularity $\lambda^\ast\in \rmC(\ol\Omega)$. 
	\end{proposition}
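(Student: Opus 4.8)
The plan is to treat the minimization of $\calK$ from \eqref{eq:k-functional} by the direct method of the calculus of variations, deferring uniqueness to strict convexity and the continuity of $\lambda^\ast$ to elliptic regularity.

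\emph{Existence.} First I would take a minimizing sequence $(\eta_n,\kappa_n,\lambda_n)\subset\FF_\calK$. The coercivity estimate \eqref{eq:coerc.K} of Proposition~\ref{prop:Coerc.calK} bounds $\eta_n^{-\pp}$ and $\eta_n$, hence confines $\eta_n$ to a compact subinterval $[\delta,M]\subset(0,\infty)$; it likewise bounds $|\kappa_n|$ and $\|\lambda_n\|_{\rmH^1}$. Passing to a subsequence, I get $\eta_n\to\eta^\ast\in(0,\infty)$, $\kappa_n\to\kappa^\ast$, and $\lambda_n\wk\lambda^\ast$ in $\HH$, where by Rellich--Kondrachov the latter converges strongly in $\rmL^2(\Omega)$ and pointwise a.e. I would then check weak lower semicontinuity term by term: the integrand $\ol{H^*}(\kappa_n{+}\lambda_n,\eta_n)$ is non-negative and converges a.e. (as $\ol{H^*}$ is continuous), so Fatou's lemma handles $\mfHS$; the maps $\kappa\mapsto\kappa Q_0$ and $\eta\mapsto\eta E_0$ are continuous and $\lambda\mapsto\bbB(\lambda,\psiext)$ is weakly continuous; finally $\calB$ is convex and continuous, hence weakly lsc, and since $\eta_n\to\eta^\ast>0$ one obtains $\tfrac1{2\eta^\ast}\calB(\lambda^\ast)\le\liminf\tfrac1{2\eta_n}\calB(\lambda_n)$. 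Summing yields $\calK(\eta^\ast,\kappa^\ast,\lambda^\ast)\le\liminf\calK(\eta_n,\kappa_n,\lambda_n)=\inf_{\FF_\calK}\calK$, so $(\eta^\ast,\kappa^\ast,\lambda^\ast)$ is a minimizer.

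\emph{Uniqueness.} This should follow from strict convexity of $\calK$, which I would verify by distinguishing how two minimizers can differ. If their $\eta$-components differ, the strict convexity of $\ol{H^*}$ in its second argument (cf.\ Remark~\ref{rem:uniqueness}) makes $\mfHS$ strictly convex along the connecting segment; if the $\eta$-components agree but the $\lambda$-components differ, the positive-definite quadratic $\lambda\mapsto\tfrac1{2\eta}\calB(\lambda)$ is strictly convex; and if only the $\kappa$-components differ, then $\mu=\kappa{+}\lambda$ differs by a nonzero constant everywhere, so strict convexity of $\ol{H^*}$ in its first argument again yields a strict inequality in $\mfHS$. In all cases $\calK$ is strictly convex on $\FF_\calK$, so the minimizer is unique.

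\emph{Regularity and the main obstacle.} To obtain $\lambda^\ast\in\rmC(\ol\Omega)$, I would record the Euler--Lagrange equation $\rmD_\lambda\calK(\eta^\ast,\kappa^\ast,\lambda^\ast)=0$. Using the Fenchel identity $\rmD_\bfy H^\ast({-}(\kappa^\ast{+}\lambda^\ast)\bfq,-\eta^\ast)=\bfc^\ast$ from Hypotheses~\ref{hypo:entropy}, this reads $\bbB(\lambda^\ast,\phi)=\eta^\ast\big(\langle N(D,\grmR),\phi\rangle_\HH+\int_\Omega\bfq{\cdot}\bfc^\ast\,\phi\dd x\big)$ for all $\phi\in\HH$, i.e., $\lambda^\ast$ weakly solves a Poisson problem with right-hand side $\eta^\ast(D+\bfq{\cdot}\bfc^\ast)$. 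The decisive structural fact is that $\mu\mapsto\bfq{\cdot}\bfc^\ast(\mu)=\bfq{\cdot}\rmD_\bfy H^\ast(-\mu\bfq,-\eta^\ast)$ is strictly decreasing, as its derivative equals $-\bfq^\top\rmD^2_{\bfy\bfy}H^\ast\,\bfq\le0$. Since both signs of charge occur, this nonlinearity tends to $\mp\infty$ as $\mu\to\pm\infty$, so a Stampacchia truncation argument---testing with $(\lambda^\ast{-}k)_+$ and $(-\lambda^\ast{-}k)_+$ and exploiting the favorable sign of the nonlinear term on the super-/sub-level sets---should yield an $\rmL^\infty$ bound on $\lambda^\ast$. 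With $\lambda^\ast\in\rmL^\infty$, the quantity $\bfq{\cdot}\bfc^\ast$ is bounded, hence lies in $\rmL^{p_\Omega}$, and the elliptic regularity theory underlying Lemma~\ref{lemma:poisson} (mixed Dirichlet--Robin data, $p_\Omega>\max\{1,d/2\}$, $p_\rmR>\max\{1,d{-}1\}$) then gives $\lambda^\ast\in\HH\cap\rmC(\ol\Omega)$. I expect this last step to be the main obstacle: a priori the growth bound \eqref{eq:existence-bound-mfs*} only provides $\mu^\ast\in\rmL^\qq$ and $\lambda^\ast\in\rmH^1$, whereas the charge density $\bfq{\cdot}\bfc^\ast(\lambda^\ast)$ may grow super-polynomially in $\lambda^\ast$, so upgrading from this $\rmL^1$-type control to an $\rmL^\infty$ bound is delicate and rests essentially on the monotonicity of the nonlinearity rather than on any extra integrability. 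Justifying the differentiation under the integral sign in the Euler--Lagrange equation, given only convexity and \eqref{eq:existence-bound-mfs*}, is a secondary technical point that I would handle through the convex subdifferential.
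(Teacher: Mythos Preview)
Your existence and uniqueness arguments match the paper's: direct method using coercivity and weak lower semicontinuity, plus the same three-way case split for strict convexity ($\eta$'s differ; $\eta$'s equal but $\lambda$'s differ; only $\kappa$'s differ).

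For regularity, you and the paper agree on the core structure---the Euler--Lagrange equation in $\lambda$ is a semilinear elliptic problem with a monotone nonlinearity coming from the convexity of $\ol{H^*}$, and once an $\rmL^\infty$ bound on $\lambda^\ast$ is in hand, continuity follows from the theory in \cite{Trol10OCPD}. The difference is in how the $\rmL^\infty$ bound is obtained. You propose to write down the Euler--Lagrange equation and run a Stampacchia truncation, treating the justification of the equation itself (differentiation under the integral) as a ``secondary technical point'' to be patched via the convex subdifferential. The paper treats exactly this point as the principal obstacle and resolves it by \emph{regularization}: it inf-convolves $\ol{H^*}$ with $\tfrac{1}{2\delta}|\cdot|^2$ to produce an integrand $\ol{H^*_\delta}$ with globally Lipschitz gradient, so that the regularized functional $\calK_\delta$ is genuinely Fr\'echet differentiable; applies the elliptic theory of \cite{Trol10OCPD} to its minimizer $\lambda_\delta$ to get a $\delta$-uniform $\rmL^\infty$ bound; and passes to the limit via $\Gamma$-convergence ($\calK_\delta\to\calK$, equicoercivity) to transfer the bound to $\lambda^\ast$. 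Only then, with $\lambda^\ast\in\rmL^\infty$, can the Euler--Lagrange equation be invoked and continuity concluded. Your direct route is viable in principle---one could characterize the subdifferential of $\lambda\mapsto\int_\Omega\ol{H^*}(\kappa^\ast{+}\lambda,\eta^\ast)\dd x$ on $\HH$ via a Rockafellar-type theorem, or compare $\lambda^\ast$ with its own truncations using minimality alone---but you have the difficulty ordering inverted: the Stampacchia step is routine once the equation is available, whereas establishing the equation without a priori boundedness of $\lambda^\ast$ (and hence of the possibly super-polynomially growing nonlinearity) is precisely what requires the machinery.
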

	\begin{proof}
First, we note that \[\calK(\eta,\kappa,\lambda)=\int_\Omega 
    \ol H^*\big(\kappa{+}\lambda, 
\eta\big) \dd x + \kappa \,Q_0 + \eta\,E_0 - \bbB(\lambda,\psiext) +
\frac1{2\eta} \calB(\lambda)\] 
is the sum of convex functionals and therefore convex itself.
We show the asserted properties in three steps.
The uniqueness of minimizers immediately follows from the strict convexity of $\calK$, 
which we establish in the first step. 

Step~1: \textit{Strict convexity of $\calK$.}\,
We assert that $\calK$ is strictly convex on its domain $\FF_\calK\setminus\{\calK=\infty\}$. Let $z_i=(\eta_i,\kappa_i,\lambda_i)\in \FF_\calK\setminus\{\calK=\infty\}$, $i=1,2$, with $z_1\not=z_2$, and let $\tau\in(0,1)$. We need to show the strict inequality 
\begin{align}\label{eq:strictConv}
    \calK(\tau z_1+(1{-}\tau)z_2)<\tau\calK(z_1)+(1{-}\tau)\calK(z_2).
\end{align}
If $\eta_1\not=\eta_2$, inequality~\eqref{eq:strictConv} follows from the strict joint convexity of $\ol{H^*}$. If $\eta_1=\eta_2\eqcol\eta$ and $\lambda_1\not=\lambda_2$, 
we deduce~\eqref{eq:strictConv} from the strict convexity of 
\[
\HH\ni\lambda\mapsto 
\frac{1}{2\eta} \calB(\lambda) = \int_{\Omega} \frac{\varepsilon}{2\eta}|\nabla \lambda|^2 \dd x
+ \int_{\Gamma_\rmR} \frac{\omega}{2\eta} \lambda^2 \dd a.
\]  
If $\calH^{d-1}(\Gamma_\rmD) > 0$ or if $\Gamma_\rmD = \emptyset$ and 
$\omega \equiv 0$, the strict convexity of this map follows from the properties of 
the space $\HH$. 
In the case $\Gamma_\rmD = \emptyset$ and $\omega \not\equiv 0$, 
strict convexity is ensured by the quadratic boundary integral. 
  
In the remaining case that $\eta_1=\eta_2$, $\lambda_1=\lambda_2$, $\kappa_1\not=\kappa_2$,
inequality~\eqref{eq:strictConv} follows by invoking the strict convexity of $\ol{H^*}$ with respect to its first argument.

Step~2: \textit{Existence.}\, The convex functional $\mathcal K:\FF_\calK\to\mathbb R\cup\{\infty\}$ is easily seen 
to be proper, bounded below, and lower semicontinuous. Thus, the coercivity property in
Proposition \ref{prop:Coerc.calK} guarantees that minimizing sequences 
$\{(\eta_j, \kappa_j, \lambda_j)\}_j\subset\FF_\calK$ for $\calK$ have subsequences that weakly converge in $\FF_\calK$ to a minimizer $(\eta^*, \kappa^*, \lambda^*)$ of $\calK$.

Step~3: \textit{Regularity.}\,
We first present the formal argument on how to deduce the continuity of 
$\lambda^*$, where $(\eta^*, \kappa^*, \lambda^*)\in \FF_\calK$ 
denotes the unique minimizer of $\calK$.
From the minimizing property, we formally deduce that 
$\rmD_\lambda\calK(\eta^*, \kappa^*, \lambda^*)=0$, meaning that the overall potential 
$\Psi^\ast \coleq \lambda^*/\eta^* \in \HH$ satisfies the elliptic equation
		\begin{gather}
        \label{eq:regularity-poisson}
			-\DIV(\eps\nabla \Psi^\ast) 
   +\rmD_1\ol{H^*} \big( \eta^\ast \Psi^\ast + \kappa^\ast, 
			\eta^\ast \big) 
   = D \quad \text{in}\ \Omega, \\ 
			\Psi^\ast = 0 \quad \text{on}\ \Gamma_\rmD, \qquad 
			\eps \nu \cdot \nabla \Psi^\ast + \omega \Psi^\ast = \grmR 
			\quad \text{on}\ \Gamma_\rmR. \nonumber
		\end{gather}
The idea is now to use monotonicity and elliptic regularity in order to
infer the boundedness and continuity of $\Psi^\ast$ and thus of $\lambda^*$. 
To this end, we aim to apply \cite[Thm.~4.8]{Trol10OCPD} which relies on our standing assumption \ref{hp:data}.  This theory guarantees the existence of a unique weak solution 
$\Phi \in \rmH^1(\Omega) \cap \rmC(\ol \Omega)$ to 
\begin{align}
 \label{eq:regularity-troeltzsch}
			-\DIV(\eps \nabla \Phi) + f(\Phi) &= D \quad \text{in}\ \Omega, \\ 
			\eps \nu \cdot \nabla \Phi + \omega \Phi &= \grmR \quad \text{on}\ \partial \Omega 
            \nonumber
\end{align}
provided $f : \mathbb R \rightarrow \mathbb R$ is continuous and monotonically non-decreasing. 
We note that the statement of \cite[Thm.~4.8]{Trol10OCPD} 
		(and also all preceding ones used in the proof therein) carry over one-to-one 
		to our situation with mixed boundary data. In particular, 
        the necessary coercivity of the bilinear form 
        $a : \HH \times \HH \rightarrow \bbR$, 
        \begin{align*}
            a[\Phi, \Psi] \coleq \int_\Omega \nabla \Phi \cdot \nabla \Psi \dd x 
            + \int_{\Gamma_\rmD} \omega \Phi \Psi \dd a
        \end{align*}
        follows in our situation directly from the properties of the function space 
        $\HH$ without an additional source term 
        in $\Omega$; see also Lemma \ref{lemma:poisson.gen} and estimate \eqref{eq:Lcoerc}. 
		Equation \eqref{eq:regularity-poisson} obviously fits to the setting 
        of equation \eqref{eq:regularity-troeltzsch} by letting 
        $f : \mathbb R \rightarrow \mathbb R$, 
		\begin{align*}
			f(\Phi) \coleq \rmD_1\ol{H^*} \big( \eta^\ast \Phi + \kappa^\ast, 
			\eta^\ast \big)
		\end{align*}
and noting that the monotone increase of $f$ is a consequence of the convexity of $\ol{H^*}$. 

In the above reasoning, we tacitly assumed the minimizer $z^*\coleq (\eta^*, \kappa^*, \lambda^*)$ 
to be sufficiently regular to guarantee the differentiability of $\calK$ at $z^*$ 
and the applicability of \cite[Thm.~4.8]{Trol10OCPD}.
To make our reasoning rigorous, we first apply the elliptic regularity argument to 
minimizers of an equicoercive sequence of approximate functionals $\calK_\delta$ that 
are Fr\'echet differentiable. The sequence $\{\calK_\delta\}_\delta$ 
is constructed in such a way that it converges to $\calK$ in the sense of 
$\Gamma$-convergence in $\FF_\calK$ with respect to the weak topology.  
Consequently, by classical $\Gamma$-convergence theory (see, e.g., 
\cite[Thm.~1.21]{Brai02GCB}), the sequence of minimizers 
$z_\delta=(\eta_\delta,\kappa_\delta,\lambda_\delta)$ of $\calK_\delta$ 
converges to the unique minimizer $z^*$ of $\calK$, weakly in $\FF_\calK$, so 
that any $\delta$-independent uniform bound for $\lambda_\delta$ is inherited by $\lambda^*$.
A convenient regularization can be obtained by inf-convolution. Let 
\[
\ol{H^*_\delta}(\mu,\eta) \coleq \inf_{\nu\in\mathbb R}
\Big(\ol{H^*}(\mu{-}\nu,\eta)+\frac{1}{2\delta} |\nu|^2\Big),
\quad\delta\in(0,1].
\]
The definition implies that $\ol{H^*_\delta}$ is convex. Furthermore,
we assert that the lower bound \eqref{eq:existence-bound-mfs*} implies 
$\ol{H^*_\delta}(\mu,\eta)\ge \tilde c_{1}\min\{(1+|\mu|)^\qq 
\eta^{-\pp},|\mu|^2+\eta^{-\pp}\}$ 
for a constant $\tilde c_1>0$ that can be chosen to be independent of $0<\delta\ll1$.
This can be seen by a case distinction: 
\begin{alignat}{3}&
\big( 1 + |\mu{-}\nu| \big)^\qq 
\eta^{-\pp}
+\frac{1}{2\delta}|\nu|^2\ge c_{2} \big(1 + |\mu| \big)^\qq 
			\eta^{-\pp}\qquad &&\text{if }|\nu|\le\frac{|\mu|}{2}\text{ or }|\nu|\ge 2|\mu|, \nonumber
   \\
   &\big( 1 + |\mu{-}\nu| \big)^\qq \eta^{-\pp}
+\frac{1}{2\delta}|\nu|^2\ge c_{3}\big(|\mu|^2+\eta^{-\pp}\big)\qquad&&\text{if }\frac{|\mu|}{2}<|\nu|<2|\mu|,\label{eq:case.mu=nu}
\end{alignat}
where we estimated $|\mu-\nu|\gtrsim|\mu|$ in the first case. In the second case, we used $|\nu|\gtrsim|\mu|$ in the second term on the left-hand side of~\eqref{eq:case.mu=nu}.
We then let $\calK_\delta:\FF_\calK\to\mathbb R$ denote the functional obtained by substituting $\ol{H^*_\delta}(\mu,\eta)$ for $\ol{H^*}(\mu,\eta)$ in the definition~\eqref{eq:k-functional} of $\calK$. 
   Adjusting the proof of Proposition~\ref{prop:Coerc.calK}, it is not difficult to see that $\calK_\delta$ enjoys the coercivity property in Proposition~\ref{prop:Coerc.calK} uniformly in $0<\delta\ll1$. 
   Furthermore, the properties of $\ol{H^*_\delta}(\mu,\eta)$ imply that $\calK_\delta$ is convex and lower-semicontinuous.
   Consequently, $\calK_\delta$ possesses  at least one minimizer $z_\delta\in \FF_\calK$. The regularized functional $\calK_\delta$ is Fr\'echet differentiable and $\rmD_\lambda \calK_\delta(\eta,\kappa,\cdot)$ is globally Lipschitz continuous (see, e.g., \cite{BC_2017}). For this reason the nonlinearity in the elliptic equation $\rmD_\lambda \calK_\delta(\eta_\delta,\kappa_\delta,\lambda)=0$ has (at most) linear growth in $\lambda$ as $|\lambda|\to\infty$. We may therefore follow the argument of \cite[Thm.~4.5]{Trol10OCPD} to deduce the estimate $\|\lambda_\delta\|_{\rmL^\infty(\Omega)}\le R$, where $R$ is independent of $0<\delta\ll1$.
Using the compact embedding $\HH\hookrightarrow \rmL^2(\Omega)$ and the fact that $\ol{H^*}\ge\ol{H^*_\delta}$, one easily verifies that 
$\calK_\delta$ converges to $\calK$ in the sense of $\Gamma$-convergence weakly in $\FF_\calK$. 
Hence, recalling the uniqueness of the minimizer $z^*$ of $\calK$, we infer that 
$z_\delta\rightharpoonup z^*$ in $\FF_\calK$ (with $\lambda_\delta\overset{*}{\rightharpoonup} \lambda^*$ in $\rmL^\infty(\Omega)$)
and consequently $\|\lambda^*\|_{\rmL^\infty(\Omega)}\le R$. The continuity $\lambda^*\in \rmC(\ol\Omega)$ now follows from~\cite[Thm.~4.8]{Trol10OCPD} by noting that the uniqueness of solutions to the elliptic equation already holds in $\rmL^\infty(\Omega) \cap \rmH^1(\Omega)$. 
\end{proof}
 
By the strict convexity of $\calK$, any critical point must be a minimizer. Thus, Proposition~\ref{prop:k-minimizer} immediately entails the uniqueness and regularity of critical points of $\calK$.
\begin{corollary}[Existence, uniqueness, and regularity of critical points of $\calK$]
    \label{cor:k-critical-point}$ $ \\ 
	  Let the hypotheses of Proposition \ref{prop:k-minimizer} be in place. 
		Then, there exists a unique critical point $(\eta^\ast, 
		\kappa^\ast,\lambda^\ast)$ of $\mathcal K$ 
		on $\FF_\calK$. It satisfies $\lambda^\ast\in \rmC(\ol\Omega)$.  
\end{corollary}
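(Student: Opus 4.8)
The plan is to obtain the corollary as an immediate consequence of Proposition~\ref{prop:k-minimizer} by exploiting that, for a strictly convex functional, every critical point is automatically the unique global minimizer. Proposition~\ref{prop:k-minimizer} already provides a unique minimizer $(\eta^*,\kappa^*,\lambda^*)\in\FF_\calK$ of $\calK$ together with the regularity $\lambda^*\in\rmC(\ol\Omega)$, and Step~1 of its proof establishes that $\calK$ is strictly convex on its effective domain. The remaining task is therefore only to reconcile the variational condition $\rmD\calK=0$ with the minimizing property, in both directions.

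For \emph{uniqueness} I would argue purely convex-analytically: if $z\in\FF_\calK$ is a critical point, then $\rmD\calK(z)=0$ is the (unique) subgradient of the convex functional $\calK$ at $z$, so convexity gives $\calK(w)\ge\calK(z)+\langle\rmD\calK(z),w{-}z\rangle=\calK(z)$ for all $w\in\FF_\calK$; hence $z$ is a global minimizer. Strict convexity forces the minimizer to be unique, so every critical point must coincide with $(\eta^*,\kappa^*,\lambda^*)$, which already yields the regularity statement $\lambda^*\in\rmC(\ol\Omega)$ by inheritance from Proposition~\ref{prop:k-minimizer}. For \emph{existence} of a critical point I would invoke that the minimizer supplied by the proposition is one: this is exactly the content of Step~3 of the proof of Proposition~\ref{prop:k-minimizer}, which shows that $(\eta^*,\kappa^*,\lambda^*)$ solves the elliptic Euler--Lagrange equation~\eqref{eq:regularity-poisson}, i.e.\ the rigorous form of $\rmD\calK(\eta^*,\kappa^*,\lambda^*)=0$.

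The one genuinely delicate point, which I expect to be the main (if mild) obstacle, is precisely the justification that the minimizer is a critical point in the strict sense $\rmD\calK=0$: since perturbations $\lambda^*+t\phi$ with $\phi\in\HH\subseteq\rmH^1(\Omega)$ need not remain in $\rmL^\infty$ when $d\ge2$, differentiation under the integral in the term $\int_\Omega\ol{H^*}(\kappa{+}\lambda,\eta)\dd x$ is not immediate. This is already handled within Proposition~\ref{prop:k-minimizer} through the inf-convolution regularization and the $\Gamma$-convergence argument of Step~3, together with the uniform $\rmL^\infty$-bound on the regularized minimizers; consequently no additional differentiability analysis is required here, and the corollary follows by assembling existence, uniqueness, and regularity from the proposition.
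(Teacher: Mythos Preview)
Your proposal is correct and follows essentially the same route as the paper's own one-sentence proof: strict convexity forces every critical point to coincide with the unique minimizer from Proposition~\ref{prop:k-minimizer}, which in turn carries the regularity $\lambda^*\in\rmC(\ol\Omega)$. You spell out the existence direction (minimizer $\Rightarrow$ critical point) more explicitly than the paper does, correctly locating the rigorous justification in the regularization argument of Step~3 of Proposition~\ref{prop:k-minimizer}; the paper leaves this implicit but relies on the same mechanism.
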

 
	The next result establishes the announced bijection between critical points of 
	the Lagrange functional $\calL$ and the strictly convex functional $\calK$. 
	
	\begin{proposition}[Critical points of $\calK$ and $\calL$]
		\label{prop:CharEquil}
		Consider the Lagrange functional $\calL$ from \eqref{eq:lagrange-functional} 
		and the convex functional $\calK$ from \eqref{eq:k-functional} together with 
		the corresponding domains $\FF_\calL$ and $\FF_\calK$ 
		defined in \eqref{eq:l-domain} and \eqref{eq:k-domain}, respectively. 
		Every critical point $(\eta,\kappa,\lambda) \in \FF_\calK$ of $\calK$ generates via
		\begin{align}
			\label{eq:d-s-ast}
			(\bfc(x),u(x))= \rmD H^*\big({-}(\lambda(x) {+} \kappa)\bfq, -\eta \big)
			\quad \text{and} \quad \psi(x) = \frac{\lambda(x)}{\eta} - \psiext(x)
		\end{align}
		a critical point $(\bfc, u, \psi, \eta, \kappa, \lambda) \in 
		\FF_\calL$ of $\calL$. 
		Every critical point $(\bfc, u, \psi, \eta, \kappa, \lambda) \in 
		\FF_\calL$ of $\calL$ gives rise to a critical point 
		$(\eta, \kappa, \lambda) \in \FF_\calK$ of $\calK$ 
		and \eqref{eq:d-s-ast} holds. In particular, there exists a bijection between 
		critical points of $\calL$ in $\FF_\calL$ and critical points 
		of $\calK$ in $\FF_\calK$. 
	\end{proposition}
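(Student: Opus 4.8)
The plan is to show that the five stationarity equations of $\calL$, collected in \eqref{eq:lagrange-eta-kappa-lambda}, \eqref{eq:lagrange-c-u}, and \eqref{eq:lagrange-psi}, correspond one-to-one with the three stationarity equations $\rmD_\eta\calK=\rmD_\kappa\calK=\rmD_\lambda\calK=0$ of $\calK$, once the primal variables $(\bfc,u,\psi)$ are reconstructed through \eqref{eq:d-s-ast}. My first observation is that the two relations in \eqref{eq:d-s-ast} are precisely the equations $\rmD_{(\bfc,u)}\calL=0$ and $\rmD_\psi\calL=0$ in disguise: by the Fenchel equivalence $\rmD H^{*}=(\rmD H)^{-1}=(-\rmD S)^{-1}$ furnished by Hypotheses~\ref{hypo:entropy}, the identity $(\bfc,u)=\rmD H^{*}(-(\lambda{+}\kappa)\bfq,-\eta)$ is equivalent to $\rmD S(\bfc,u)=\binom{(\kappa{+}\lambda)\bfq}{\eta}$, that is to \eqref{eq:lagrange-c-u}, while $\psi=\lambda/\eta-\psiext$ is merely the rearrangement $\lambda=\eta(\psi{+}\psiext)$ of \eqref{eq:lagrange-psi}. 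Hence this half of the correspondence is built into \eqref{eq:d-s-ast}, and it remains to match the other three equations.

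For the direction from $\calK$ to $\calL$, I would start from a critical point $(\eta,\kappa,\lambda)\in\FF_\calK$ and define $(\bfc,u,\psi)$ by \eqref{eq:d-s-ast}. Since any critical point of the strictly convex $\calK$ is its minimizer, Corollary~\ref{cor:k-critical-point} supplies $\lambda\in\rmC(\ol\Omega)$; composing the continuous map $\rmD H^{*}$ (the inverse of the homeomorphism $-\rmD S$) with $x\mapsto(-(\lambda(x){+}\kappa)\bfq,-\eta)$ then yields $(\bfc,u)\in\FF=\rmC(\ol\Omega,(0,\infty)^{I+1})$, so that $(\bfc,u,\psi,\eta,\kappa,\lambda)\in\FF_\calL$. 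I would then differentiate \eqref{eq:k-functional}. Using $\rmD_1 H^{*}=\bfc$ and $\rmD_2 H^{*}=u$, the equation $\rmD_\kappa\calK=0$ reads $\int_\Omega\bfq{\cdot}\bfc\dd x=Q_0$, which is \eqref{eq:lagrange-kappa}; the equation $\rmD_\eta\calK=0$ reads $\int_\Omega u\dd x+\tfrac1{2\eta^{2}}\calB(\lambda)=E_0$, and substituting $\lambda=\eta(\psi{+}\psiext)$, hence $\tfrac1{2\eta^{2}}\calB(\lambda)=\tfrac12\calB(\psi{+}\psiext)$, turns it into the energy constraint \eqref{eq:lagrange-eta}; finally, testing $\rmD_\lambda\calK=0$ against $\phi\in\HH$ gives $\tfrac1\eta\bbB(\lambda,\phi)-\bbB(\phi,\psiext)=\int_\Omega\bfq{\cdot}\bfc\,\phi\dd x$, and the same substitution together with the symmetry of $\bbB$ collapses the left-hand side to $\bbB(\psi,\phi)$, i.e.\ to the weak Poisson equation \eqref{eq:lagrange-lambda}. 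All of \eqref{eq:lagrange-eta-kappa-lambda}--\eqref{eq:lagrange-psi} are thus verified.

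For the converse, starting from a critical point $(\bfc,u,\psi,\eta,\kappa,\lambda)\in\FF_\calL$ of $\calL$, equations \eqref{eq:lagrange-c-u} and \eqref{eq:lagrange-psi} give \eqref{eq:d-s-ast} by the same Fenchel equivalence, so in particular $(\bfc,u)$ and $\psi$ are exactly the explicit functions of $(\eta,\kappa,\lambda)$ appearing there. Reading the computations of the previous paragraph backwards, the three constraint equations \eqref{eq:lagrange-eta}, \eqref{eq:lagrange-kappa}, \eqref{eq:lagrange-lambda} translate into $\rmD_\eta\calK=\rmD_\kappa\calK=\rmD_\lambda\calK=0$, so $(\eta,\kappa,\lambda)$ is a critical point of $\calK$. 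Since \eqref{eq:d-s-ast} determines $(\bfc,u,\psi)$ uniquely from $(\eta,\kappa,\lambda)$, the two assignments are mutually inverse, which yields the claimed bijection.

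The computations themselves are elementary; the points requiring care are, first, the clean use of the Legendre duality $\rmD H^{*}=(-\rmD S)^{-1}$ to identify the components $\rmD_1 H^{*}=\bfc$, $\rmD_2 H^{*}=u$ and thereby to read the derivatives of $\calK$ as the physical constraints, and second, the bookkeeping of the $\eta$-derivative of the term $\tfrac1{2\eta}\calB(\lambda)$: it is exactly the rescaling $\lambda=\eta(\psi{+}\psiext)$ that makes $\tfrac1{2\eta^{2}}\calB(\lambda)$ reproduce the genuinely nonlinear electrostatic energy $\tfrac12\calB(\psi{+}\psiext)$ in the energy constraint. The most delicate issue I expect is regularity rather than algebra: to land in $\FF_\calL$ in the forward direction one needs $(\bfc,u)$ continuous and strictly positive, which hinges on the continuity $\lambda\in\rmC(\ol\Omega)$ from Corollary~\ref{cor:k-critical-point} and on $\rmD H^{*}$ being a genuine homeomorphism onto $(0,\infty)^{I+1}$; likewise the differentiability of $\calK$ underlying $\rmD\calK=0$ must be justified at the continuous critical point. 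Both are already secured by Hypotheses~\ref{hypo:entropy} and the regularity statement of Proposition~\ref{prop:k-minimizer}.
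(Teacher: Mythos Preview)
Your proposal is correct and follows essentially the same approach as the paper: you identify \eqref{eq:d-s-ast} with the equations $\rmD_{(\bfc,u)}\calL=0$ and $\rmD_\psi\calL=0$ via the Fenchel equivalences, compute the three partial derivatives of $\calK$ explicitly, match them to the constraint equations \eqref{eq:lagrange-eta-kappa-lambda} after substituting $\lambda=\eta(\psi{+}\psiext)$, and invoke the regularity of the minimizer (Proposition~\ref{prop:k-minimizer}/Corollary~\ref{cor:k-critical-point}) to ensure $(\bfc,u)\in\FF$. The paper's proof does exactly this, with the same computations and the same appeal to strict convexity for regularity.
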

	\begin{proof}
		Let $(\eta, \kappa, \lambda) \in \FF_\calK$ 
        be a critical point of $\calK$. 
        Thanks to the strict convexity of $\calK$, we know that 
        $(\eta, \kappa, \lambda)$ is the unique minimizer of $\calK$, 
        which by the arguments above fulfils $\lambda \in \rmC(\ol \Omega)$. 
        Hence, the relations in \eqref{eq:d-s-ast} give rise to continuous 
        and uniformly positive $(\bfc, u) \in \FF$, which 
        ensures that the subsequent derivatives of $\calK$ are well-defined. 
		By the definition of $\ol{H^*}$ from $H^*$ and \eqref{eq:d-s-ast}, 
		we have $\rmD \ol{H^*}(\lambda {+} \kappa,\eta) = ( {-} \bfq {\cdot} \bfc, -u)$. 
		Simple calculations and $\psi = \lambda/\eta - \psiext$ show that 
		\begin{align*}
			\rmD_\eta \calK(\eta,\kappa,\lambda) &= 
			-\frac12 \calB \Big( \frac{\lambda}{\eta} \Big) - \int_\Omega u
			\dd x + E_0, \\
			\rmD_\kappa \calK(\eta,\kappa,\lambda) &= 
            - \int_\Omega \bfq {\cdot} \bfc \, \dd x + Q_0, \\ 
			\rmD_\lambda \calK(\eta,\kappa,\lambda) h &= \int_\Omega \Big( 
            \varepsilon \nabla \Big( \frac{\lambda}{\eta} - \psiext \Big) \cdot \nabla h 
            - \bfq {\cdot} \bfc \, h \Big) \dd x 
            + \int_{\Gamma_\rmR} \omega \Big( \frac{\lambda}{\eta} - \psiext \Big) h \dd a 
		\end{align*}
		with $h \in \HH$. 
Furthermore, $\bfX \coleq (\bfc, u, \psi, \eta, \kappa, \lambda) \in \FF_\calL$ 
and the algebraic conditions \eqref{eq:lagrange-c-u}--\eqref{eq:lagrange-psi}, 
i.e., $\rmD_{(\bfc,u,\psi)} \calL(\bfX) = 0$, 
hold true due to the identities \eqref{eq:d-s-ast} and the 
well-known Fenchel equivalences. 
Finally, $\rmD \calK(\eta,\kappa,\lambda)=0$ implies 
\eqref{eq:lagrange-eta-kappa-lambda}, i.e., $\rmD_{(\eta, \kappa, \lambda)} \calL(\bfX) = 0$. 

Vice versa, given a critical point $(\bfc, u, \psi, \eta, \kappa, \lambda) 
\in \FF_\calL$ of $\calL$, 
the previous identities on $\rmD \calK (\eta, \kappa, \lambda) = 0$ 
and \eqref{eq:d-s-ast} follow from \eqref{eq:lagrange-eta-kappa-lambda}, 
\eqref{eq:lagrange-c-u}, and \eqref{eq:lagrange-psi}. 
\end{proof}

	We are now in a position to prove the existence of a unique critical point  
    $(\bfc^\ast, u^\ast) \in \FF$ of the total entropy $\calS$ 
    under the charge and energy constraints \eqref{eq:constraints}. 

\bigskip
\noindent
\begin{proof}[Proof of Theorem \ref{thm:existence-uniqueness}]
		By Proposition \ref{prop:CharEquil}, there is a one-to-one correspondence between 
		critical points of $\calL$ in $\FF_\calL$, 
        i.e., solutions to the Lagrange equations 
        \eqref{eq:lagrange-eta-kappa-lambda}--\eqref{eq:lagrange-psi}, 
		and critical points of $\calK$ in $\FF_\calK$. Hence, 
		by Corollary \ref{cor:k-critical-point}, there exists a unique solution 
        $(\bfc, u, \psi, \eta, \kappa, \lambda) \in \FF_\calL$
        to \eqref{eq:lagrange-eta-kappa-lambda}--\eqref{eq:lagrange-psi}. 
        Recalling the expressions for the derivatives of $\calE(\bfc, u)$ 
        and $\calQ(\bfc, u)$ from \eqref{eq:derivatives-energy-charge} and 
        demanding $\lambda = \eta (\psi {+} \psiext)$, we see 
        that \eqref{eq:lagrange-c-u} is equivalent to the 
        functional derivative 
        \begin{align*}
            \rmD \calS(\bfc, u) = \eta \rmD \calE(\bfc, u) + \kappa \rmD \calQ(\bfc, u),
        \end{align*}
        and we observe that \eqref{eq:lagrange-eta-kappa-lambda} is equivalent to 
        $\calE(\bfc, u) = E_0$, $\calQ(\bfc, u) = Q_0$, and 
        Poisson's equation \eqref{eq:poisson}--\eqref{eq:poisson-bc}. 
        This shows that $(\bfc^\ast, u^\ast) \in \FF$ 
        as defined in \eqref{eq:d-s-ast} is the unique 
        critical point of $\calS$ under the constraints \eqref{eq:constraints}. 
        The continuity of the associated potential $\Psi^\ast = \psi^\ast + 
        \psi_\mathrm{ext} = \lambda^\ast/\eta^\ast$ 
        follows from Corollary \ref{cor:k-critical-point}. 
        Finally, \eqref{eq:lagrange-c-u} entails that $1/\theta^\ast = 
        \rmD_u S(\bfc^\ast, u^\ast) = \eta^\ast > 0$ is constant. 
	\end{proof}

\section{Global optimization by the direct method}
\label{se:proofs-DirectMethod}
In this section, we establish the existence, uniqueness, and regularity of a global optimizer as asserted in Subsection~\ref{su:DirectMethod}.
Throughout this section, we assume Hypotheses~\ref{hp:h.pr.cx.lsc}--\ref{hp:S.infinity}.

\subsection{Lower semicontinuity of the extended negative entropy}\label{ssec:ext-meas}

Recall the extended negative entropy functional $\mathcal{\wt H}:U\to\mathbb{R}\cup\{\infty\}$ defined in \eqref{eq:negative-entropy-functional}. In the following, we will consider the Legendre transform of $H$ as a function defined on the whole space $H^*:\mathbb{R}^{I+1}\to\mathbb{R}\cup\{\infty\}$ via
	\begin{align*}
		H^*(\bfzeta)=\sup_{\bfz\in\mathbb{R}^{I+1}}\{\bfzeta\cdot \bfz-H(\bfz)\},\quad \bfzeta\in\mathbb R^{I+1}.
	\end{align*}
Property~\ref{hp:h.pr.cx.lsc} implies that the restriction of $H$ to $\text{dom}\, H$ is continuous.
Furthermore, $H^*$ is proper, convex, and lower semicontinuous, so that likewise $H^*:\text{dom}\,H^*\to\mathbb{R}$ is continuous.
Finally, note that due to the Fenchel--Moreau theorem and~\ref{hp:h.pr.cx.lsc}, it holds that $(H^*)^*=H$.

In the following lemma we show the crucial lower semicontinuity with respect to (weak, weak-star) convergence in $U$.
\begin{lemma}\label{l:usc-measure}
The extended negative entropy $\mathcal{\wt H}:U\to \mathbb R\cup\{\infty\}$ is lower semicontinuous with respect to (weak, weak-star) convergence in $\rmL^1(\Omega)^I\times M(\ol \Omega)$. More precisely, whenever $(\bfc_j,\nu_j),(\bfc,\nu)\subset U$ with $\bfc_j\rightharpoonup \bfc$ in $\rmL^1(\Omega)$
and $\nu_j\overset{*}{\rightharpoonup} \nu$ in $M_+(\ol\Omega)$, then
\begin{align}\label{eq:uscS}
    \mathcal{\wt H}(\bfc,\nu)\le \liminf_{j\to\infty} \mathcal{\wt H}(\bfc_j,\nu_j).
\end{align}
\end{lemma}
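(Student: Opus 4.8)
The plan is to deduce the lower semicontinuity from a Legendre--Fenchel duality representation of $\mathcal{\wt H}$ as a pointwise supremum of affine functionals that are \emph{continuous} for the (weak, weak-star) topology. Concretely, I would establish that for every $(\bfc,\nu)\in U$ with $\nu=u\,\dd x+\nu^s$,
\[
\mathcal{\wt H}(\bfc,\nu)=\sup\Big\{\int_\Omega \bfy\cdot\bfc\,\dd x+\int_{\ol\Omega}v\,\dd\nu-\int_\Omega H^*(\bfy,v)\,\dd x\Big\},
\]
where the supremum runs over all $(\bfy,v)\in\rmC(\ol\Omega;\mathbb R^{I+1})$ with $(\bfy(x),v(x))\in\text{dom}\,H^*$ for every $x\in\ol\Omega$; since $H^*$ is continuous on $\text{dom}\,H^*$ and $\ol\Omega$ is compact, each such integrand $H^*(\bfy,v)$ is bounded, so the test functional is well defined. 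Granting this identity, lower semicontinuity is immediate: for fixed admissible $(\bfy,v)$ the term $\int_\Omega\bfy\cdot\bfc\,\dd x$ is continuous under $\bfc_j\wk\bfc$ in $\rmL^1$ because $\bfy\in\rmL^\infty$, the term $\int_{\ol\Omega}v\,\dd\nu$ is continuous under $\nu_j\weakstar\nu$ because $v\in\rmC(\ol\Omega)$, and the last term is constant in $(\bfc,\nu)$; hence the right-hand side is a supremum of sequentially continuous functionals and thus (weak, weak-star) sequentially lower semicontinuous, which is exactly~\eqref{eq:uscS}.

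A structural observation driving the whole argument is that the sublinearity hypothesis~\ref{hp:S.infinity} forces $\text{dom}\,H^*\subseteq\mathbb R^I\times(-\infty,0]$ together with $\sup\{v\mid(\bfy,v)\in\text{dom}\,H^*\}=0$; equivalently, the recession function of $H$ in the internal-energy direction vanishes, $H^\infty(0,1)=0$. This is precisely what makes the singular part $\nu^s$ irrelevant, since in the duality formula one may admit test fields with $v$ arbitrarily close to $0$, so that $\int_{\ol\Omega}v\,\dd\nu^s$ can be driven to $0$ while staying $\le 0$. I would prove the representation by two inequalities. The bound ``$\ge$'' (the supremum is dominated by $\mathcal{\wt H}$) is elementary: for any admissible $(\bfy,v)$ the Fenchel--Young inequality gives $\bfy\cdot\bfc+vu-H^*(\bfy,v)\le H(\bfc,u)$ pointwise, and because $v\le 0$ and $\nu^s\ge0$ one has $\int_{\ol\Omega}v\,\dd\nu=\int_\Omega vu\,\dd x+\int_{\ol\Omega}v\,\dd\nu^s\le\int_\Omega vu\,\dd x$; integrating and recalling $\mathcal{\wt H}(\bfc,\nu)=\int_\Omega H(\bfc,u)\,\dd x$ yields the claim.

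The reverse inequality ``$\le$'' is the main obstacle. In the only nontrivial case $\mathcal{\wt H}(\bfc,\nu)<\infty$, I would start from a measurable selection $(\bfy(x),v(x))\in\partial H(\bfc(x),u(x))$ of the subdifferential, realizing equality in Fenchel--Young a.e.\ with $v\le0$. Since this selection is merely measurable, it must be replaced by continuous test fields: invoking Lusin's theorem and the continuity of $H^*$ on its domain, one approximates $(\bfy,v)$ by $(\bfy_\varepsilon,v_\varepsilon)\in\rmC(\ol\Omega;\text{dom}\,H^*)$ so that $\int_\Omega[\bfy_\varepsilon\cdot\bfc+v_\varepsilon u-H^*(\bfy_\varepsilon,v_\varepsilon)]\,\dd x\to\int_\Omega H(\bfc,u)\,\dd x$. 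The delicate point is the singular contribution $\int_{\ol\Omega}v_\varepsilon\,\dd\nu^s$: as $\nu^s$ is concentrated on a Lebesgue-null set, I would use $\sup\{v\mid(\bfy,v)\in\text{dom}\,H^*\}=0$ (sublinearity) to deform $v_\varepsilon$ to values close to $0$ on a small open neighborhood of that set, paying only an arbitrarily small amount in the absolutely continuous integral while sending the singular term to $0$. Combining the two approximations produces admissible test fields whose functional value approaches $\mathcal{\wt H}(\bfc,\nu)$. This relaxation/attainment step is the technical heart and is exactly the content supplied by the theory of convex functionals of measures~\cite{DT_1984}; an essentially equivalent route would be to identify $(\bfc_j\,\dd x,\nu_j)$ with vector measures converging weakly-star and to invoke the Reshetnyak--Demengel--Temam lower-semicontinuity theorem directly, again observing that the recession term vanishes by sublinearity.
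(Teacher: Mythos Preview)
Your approach is essentially the same as the paper's: both prove the duality representation
\[
\mathcal{\wt H}(\bfc,\nu)=\sup_{\bfzeta\in\mathcal{D}_H(\rmC(\ol\Omega)^{I+1})}\bigg(\int_\Omega \bfc\cdot\bfzeta_I\,\dd x+\int_{\ol\Omega}\zeta_0\,\dd\nu-\int_\Omega H^*(\bfzeta)\,\dd x\bigg)
\]
and deduce lower semicontinuity as a supremum of (weak, weak-star) continuous affine functionals; both identify sublinearity as the reason $\zeta_0\le 0$, making the easy inequality immediate, and both handle the singular part by deforming $\zeta_0$ to values near $0$ on a small open neighborhood of $\mathrm{supp}\,\nu^s$.

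The only notable difference is in the hard inequality. The paper does \emph{not} start from a measurable subgradient selection plus Lusin; instead it quotes the known $\rmL^1$-duality formula $\mathcal{H}(\bfz)=\sup_{\bfzeta}\{\int\bfz\cdot\bfzeta-\int H^*(\bfzeta)\}$ over continuous $\bfzeta$ (from \cite{Brezis_1972,DT_1984}) and then shows that the measure supremum equals this $\rmL^1$ supremum by the deformation argument. This is technically cleaner than your Lusin route: with a bare measurable selection $(\bfy,v)\in\partial H(\bfc,u)$ you have no a~priori integrability of $\bfy$, $v$, or $H^*(\bfy,v)$ on the Lusin-bad set (and $\partial H$ may be empty where components of $(\bfc,u)$ vanish), so controlling the error when passing to continuous approximants needs extra care. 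Your final remark that this step ``is exactly the content supplied by \cite{DT_1984}'' is precisely how the paper closes the gap.
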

\begin{proof}
	The proof is based on ideas from convex duality as appearing in~\cite{DT_1984}. 
	First, we recall the well-known fact that the properties~\ref{hp:h.pr.cx.lsc} of $H$ allow us to express the functional $\mathcal{H}: \rmL^1_+(\Om)^{I+1}\to\mathbb{R}\cup\{\infty\}$ in terms of its convex conjugate (see, e.g., \cite{Brezis_1972,DT_1984}): for all $\bfz\in \rmL^1_+(\Om)^{I+1}$, 
	\begin{align}\label{eq:duality.dens}
		\mathcal{H}(\bfz) = \sup_{\bfzeta\in\mathcal{D}_H(X)}\bigg(\int_\Omega \bfz\cdot \bfzeta\dd x - \int_\Omega H^*(\bfzeta)\dd x\bigg),
	\end{align}
	where $\mathcal{D}_H(X) \coleq \{\bfzeta\in X:H^*\circ\bfzeta\in \rmL^1(\Omega)\}$ and $X \coleq \rmC(\ol\Om)^{I+1}$ for the rest of this proof, 
	which is composed of two steps. The main ingredient is an extension of formula~\eqref{eq:duality.dens} to the measure setting, which will be established in the first step.	
	
Step~1: \textit{Duality formula for measure}.
We assert that, for every $\nu=u\,\dn x+\nu^s\in M_+(\ol\Omega)$ and $\bfc\in \rmL^1_+(\Omega)^I$, 
\begin{align}\label{eq:duality.meas}
    \mathcal{\wt H}(\bfc,\nu) = \sup_{\bfzeta=(\bfzeta_I,\zeta_0)\in\mathcal{D}_H(X)}\bigg(\langle\nu,\zeta_0\rangle +\int_\Omega \bfc\cdot \bfzeta_I\dd x - \int_\Omega H^*(\bfzeta)\dd x\bigg),
\end{align}
where $\langle\nu,\zeta_0\rangle:=\int_{\ol\Om}\zeta_0\dd \nu$, which equals $\int_{\Om}\zeta_0u\dd x+\int_{\ol\Om}\zeta_0\dd \nu^s$. 

\noindent\textit{Proof of Step~1:} 
Since $ \mathcal{\wt H}(\bfc,\nu) =	\mathcal{H}(\bfc,u)$, it suffices to show that the right-hand side of the asserted identity agrees with that of formula~\eqref{eq:duality.dens} for $\bfz=(\bfc,u)$.

(a) `\textit{RHS\;\eqref{eq:duality.meas}$\le$RHS\;\eqref{eq:duality.dens}}':
Since $u\mapsto H(\bfc,u)$ is sublinear as $u\uparrow\infty$, it follows that $H^*(\bfzeta_I,\zeta_0)=+\infty$ whenever $\zeta_0>0$. Indeed,  the definition of the Legendre transform implies, for $(\bfzeta_I,\zeta_0)\in \mathbb R^I\times\mathbb R_{>0}$,
\[H^*(\bfzeta_I,\zeta_0)\ge \sup_{u\ge0}\big(\zeta_0u-H(0,u)\big)\ge\limsup_{u\to\infty}u(\zeta_0+o(1))=+\infty.\]
Therefore, $\zeta_0\le0$ in $\Om$ for all $\bfzeta=(\bfzeta_I,\zeta_0)\in \mathcal{D}_H(X)$. Hence,
$\la\nu^s,\zeta_0\ra\le0$ for all $\bfzeta=(\bfzeta_I,\zeta_0)\in\mathcal{D}_H(X)$, showing the inequality asserted in (a).

(b) `\textit{RHS\;\eqref{eq:duality.meas}$\ge$RHS\;\eqref{eq:duality.dens}}':
Let $\bfzeta=(\bfzeta_I,\zeta_0)\in\mathcal{D}_H(X)$ be given. Since $\nu^s\perp\mathcal{L}^d$, there exists for every $\delta>0$ a relatively open set $\widehat\Om_\delta\subset\ol\Om$ with $\mathcal{L}^d(\widehat\Om_\delta)\le \delta/2$ such that $\nu^s(\ol\Om\setminus\widehat\Om_\delta)=0$. Choose a relatively open set $\Om_\delta\subset\ol\Om$ with $\Om_\delta\supset\supset\widehat\Om_\delta$ and $\mathcal{L}^d(\Om_\delta)\le\delta$, and a function
$\eta_\delta\in \rmC(\ol\Om,[0,1{-}\ve])$ with $\eta_\delta=1{-}\ve$ on $\widehat\Om_\delta$ for suitable $\ve=\ve(\delta)\in(0,1)$ and with $\text{supp}\,\eta_\delta\subset\Om_\delta$. Then consider  $\hat\bfzeta=(\hat\bfzeta_I,\hat\zeta_0) \coleq (\bfzeta_I,(1-\eta_\delta)\min\{\zeta_0,-\chi_{\Om_\delta}\})\in \mathcal{D}_H(X)$ as a competitor in~\eqref{eq:duality.meas}. Since $\zeta_0\le0$,  it holds that $\hat\bfzeta\equiv\bfzeta$ on $\ol\Om\setminus\Om_\delta$ and hence
\begin{align*}
	\langle\nu,\hat\zeta_0\rangle +\int_\Omega \bfc\cdot \hat\bfzeta_I\dd x - \int_\Omega H^*(\hat\bfzeta)\dd x
	=\int_\Omega \bfz\cdot \bfzeta\dd x 
	- \int_\Omega H^*(\bfzeta)\dd x + R_\delta,
\end{align*}
where 
\begin{align*}
R_\delta \coleq  \int_{\Omega_\delta} u\cdot (\hat\zeta_0-\zeta_0)\dd x
+\int_{\Omega_\delta} (H^*(\bfzeta)-H^*(\hat\bfzeta))\dd x+\int_{\ol\Om}\hat\zeta_0\dd\nu^s.
\end{align*}
It remains to show that $\limsup_{\delta\to0}R_\delta\ge0$. Once established, this implies that for every $\bfzeta\in\mathcal{D}_H(X)$, there exists a sequence $\hat\bfzeta^{(N)}$, $N\to\infty$ as $\delta\downarrow0$, such that 
\begin{align*}
\limsup_{N\to\infty}\bigg(\langle\nu,\zeta_0^{(N)}\rangle +\int_\Omega \bfc\cdot \bfzeta_I^{(N)}\dd x - \int_\Omega H^*(\bfzeta^{(N)})\dd x\bigg)\ge 
\int_\Omega \bfz\cdot \bfzeta\dd x - \int_\Omega H^*(\bfzeta)\dd x,
\end{align*}
which implies the asserted inequality in (b) by means of a diagonal argument.

The first term in $R_\delta$ is easily handled, because
$|\int_{\Omega_\delta} u\cdot (\hat\zeta_0-\zeta_0)\dd x|\le \big( \|\hat\zeta_0\|_{\rmC(\ol\Om)}+\|\zeta_0\|_{\rmC(\ol\Om)} \big) \|u\|_{\rmL^1(\Om_\delta)}\overset{\delta\downarrow0}{\to}0$.
For the second term, we note that the increase of the function $(-\infty,0)\ni s\mapsto H^*(\bfzeta_I,s)\in\mathbb{R}$ implies $H^*(\bfzeta_I,{-}\|\hat\zeta_0\|_{\rmC(\ol\Om)}{-}1)\le H^*(\hat\bfzeta)\le H^*(\bfzeta_I,-\ve)$ in $\Om_\delta$, so that choosing the parameter $\ve=\ve(\delta)=o(1)$ to decrease sufficiently slowly as $\delta\downarrow0$, we can ensure that the sequence $\chi_\delta H^*(\hat\bfzeta)$ is uniformly integrable as $\delta\to0$, and therefore also the sequence $g_\delta= \chi_\delta(H^*(\bfzeta)-H^*(\hat\bfzeta))$. Thus, $\int_{\Om}g_\delta\dd x\to0$, since $g_\delta\to0$ pointwise a.e.\ in $\Om$. Observe that in this argument, $\ve$ can be chosen 
independently of $x\in \ol\Om$, since $\ol\Om$ is compact and $\bfzeta_I$ is continuous.
Finally, we note that thanks to $\lim_{\delta\downarrow0}\ve=0$ and the fact that $\hat\zeta_0\equiv-\ve$ on $\text{supp}\,\nu^s$, the third integral term in $R_\delta$ clearly 
converges to $0$ in the limit $\delta\to0$.
In conclusion, we infer $\lim_{\delta\downarrow0}R_\delta=0$ and thus (b).

Combining (a) and (b) yields the identity~\eqref{eq:duality.meas}.

Step~2: \textit{Proof of~\eqref{eq:uscS} by duality formula.}
Owing to~\eqref{eq:duality.meas}, there exists a sequence $\bfzeta^{(N)}=\big(\bfzeta_I^{(N)},\zeta_0^{(N)}\big) \in \mathcal{D}_H(X)$, $N\in \mathbb{N}$, such that 
\begin{align}\label{eq:lsc.pf-1}
\mathcal{\wt H}(\bfc,\nu) = \lim_{N\to\infty} \bigg(\big\langle\nu,\zeta_0^{(N)}\big\rangle +\int_\Omega \bfc\cdot \bfzeta_I^{(N)}\dd x - \int_\Omega H^*\big(\bfzeta^{(N)}\big)\dd x\bigg).
\end{align}
Furthermore, for every $N\in \mathbb{N}$, 
\begin{align*}
	\big\langle\nu,\zeta_0^{(N)}\big\rangle &+\int_\Omega \bfc\cdot \bfzeta_I^{(N)}\dd x - \int_\Omega H^*\big(\bfzeta^{(N)}\big)\varphi\dd x
	\\&=\lim_{j\to\infty}\bigg(\big\langle\nu_j,\zeta_0^{(N)}\big\rangle +\int_\Omega \bfc_j\cdot \bfzeta_I^{(N)}\dd x - \int_\Omega H^*\big(\bfzeta^{(N)}\big)\dd x\bigg)
	\le \liminf_{j\to\infty} \, \mathcal{\wt H}(\bfc_j,\nu_j),
\end{align*}
where in the second step, we used once more formula~\eqref{eq:duality.meas}.
The right-hand side of this inequality is independent of~$N$. Hence, sending $N\to\infty$ and using~\eqref{eq:lsc.pf-1} gives~\eqref{eq:uscS}.       
\end{proof}

In order to carry out the direct method, we need to ensure that the feasible set $M_{E_0,Q_0}$ resp.\ $\wt M_{E_0,Q_0}$ is non-empty. In the following, we provide  sufficient criteria by elaborating on Proposition~\ref{pr:MinElectroEner}.
For simplicity, we only consider the case $\min_iq_i<0<\max_j q_j$, which allows us to invoke Proposition~\ref{pr:MinElectroEner}.

\begin{lemma}[Non-emptiness of feasible set] \label{l:feasible.set}
        Assume that $\min_iq_i<0<\max_j q_j$, and let $E_0>\rmV(Q_0,\psie)$. 
        Then, $M_{E_0,Q_0}\not=\emptyset$ and 
		$\inf_{\wt M_{E_0,Q_0}}\mathcal{\wt H}<+\infty$.
	\end{lemma}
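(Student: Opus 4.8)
The plan is to produce a single \emph{bounded} pair $(\bfc,u)\in \rmL^\infty_+(\Omega)^{I+1}$ belonging to $M_{E_0,Q_0}$; both assertions then follow immediately. Non-emptiness of $M_{E_0,Q_0}$ is precisely the existence of this pair, and since $M_{E_0,Q_0}$ embeds into $\wt M_{E_0,Q_0}$ through $(\bfc,u)\mapsto(\bfc,u\,\dd x)$, one obtains $\inf_{\wt M_{E_0,Q_0}}\mathcal{\wt H}\le \mathcal{\wt H}(\bfc,u\,\dd x)=\mathcal H(\bfc,u)$. The finiteness $\mathcal H(\bfc,u)<\infty$ is then cheap: by \ref{hp:h.pr.cx.lsc}--\ref{hp:h.dom} the restriction of $H$ to $\ol\sfD$ is continuous, hence bounded on the compact range of a bounded $(\bfc,u)$, so $\int_\Omega H(\bfc,u)\,\dd x<\infty$.

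Since $\calE(\bfc,u)=\rmE(\bfq{\cdot}\bfc,\psiext)+\int_\Omega u\,\dd x$ and $\calQ(\bfc,u)=\int_\Omega\bfq{\cdot}\bfc\,\dd x$, everything reduces to finding a bounded charge density $\rho\in\rmL^\infty(\Omega)\subset\HH^*$ with $\int_\Omega\rho\,\dd x=Q_0$ and $\rmE(\rho,\psiext)<E_0$. Given such a $\rho$, the two-sign hypothesis realizes it by non-negative concentrations: fixing $i_0,j_0$ with $q_{i_0}<0<q_{j_0}$ and putting $c_{j_0}=\rho^+/q_{j_0}$, $c_{i_0}=\rho^-/|q_{i_0}|$, and $c_k\equiv0$ otherwise yields a bounded $\bfc\ge0$ with $\bfq{\cdot}\bfc=\rho^+-\rho^-=\rho$. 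Choosing the constant $u\equiv(E_0-\rmE(\rho,\psiext))/|\Omega|\ge0$, which is admissible precisely because of the strict inequality $\rmE(\rho,\psiext)<E_0$, then gives $\calQ(\bfc,u)=Q_0$ and $\calE(\bfc,u)=E_0$, i.e.\ $(\bfc,u)\in M_{E_0,Q_0}$.

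The hard part is the construction of this bounded $\rho$, and it is here that the structure of $\HH$ and Lemma~\ref{lemma:l1-density} enter. The hypothesis $E_0>\rmV(Q_0,\psiext)$ only furnishes, by definition of the infimum $\rmV$, some $\rho_0\in\rmL^1(\Omega)\cap\HH^*$ with $\int_\Omega\rho_0\,\dd x=Q_0$ and $\rmE(\rho_0,\psiext)<E_0$; this $\rho_0$ need not be bounded, and naive $\rmL^1$-truncation fails to converge in $\HH^*$ when $d\ge3$, so the energy cannot be controlled that way. Instead I would approximate $\rho_0$ in $\HH^*$ by a bounded $g\in\rmC(\ol\Omega)$, which is legitimate because $\rmC(\ol\Omega)$ is $\HH^*$-dense by \ref{Fact1} and $\rho\mapsto\rmE(\rho,\psiext)=\tfrac12\calB(L^{-1}\rho+\psiext)$ is continuous on $\HH^*$; this keeps $\rmE(g,\psiext)<E_0$. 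The delicate point is restoring the charge $\int_\Omega\rho=Q_0$, which is \emph{not} controlled by $\HH^*$-convergence unless $1_\Omega\in\HH$, and this forces the three cases of Proposition~\ref{pr:MinElectroEner}. In the pure Robin case $1_\Omega\in\HH$, so $\int_\Omega g$ may be corrected by adding a vanishing constant. In the some-Dirichlet case I would exploit the charge-adjusting sequence $\sigma_m$ of \ref{Fact2}, which satisfies $\int_\Omega\sigma_m\,\dd x=1$ and $\|\sigma_m\|_{\HH^*}\to0$, so that $\rho\coleq g+(Q_0-\int_\Omega g)\sigma_m$ has charge exactly $Q_0$, stays bounded for fixed $m$, and, for $m$ large, has energy still below $E_0$. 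In the pure Neumann case one necessarily has $Q_0=0$, and subtracting the mean from $g$ leaves its induced element of $\HH^*$ (hence its energy) unchanged while enforcing $\int_\Omega\rho=0$, so the correction is free. In every case this produces the desired bounded $\rho$, and the routine steps of the preceding paragraph finish the proof.
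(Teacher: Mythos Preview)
Your proof is correct and follows essentially the same route as the paper's. The paper's argument is more compressed: it first asserts that the infimum defining $\rmV(Q_0,\psiext)$ can equivalently be taken over continuous $\rho$ with the charge constraint (citing \ref{Fact1} and the proof of Proposition~\ref{pr:MinElectroEner}), so a continuous $\hat\rho$ with $\int_\Omega\hat\rho=Q_0$ and $\rmE(\hat\rho,\psiext)\le E_0$ is obtained in one stroke, and then builds $(\bfc,u)$ from it exactly as you do. Your version unpacks that first step explicitly, doing the three-case charge correction (Robin via $1_\Omega\in\HH$, Dirichlet via \ref{Fact2}, Neumann by subtracting the mean) by hand; this is precisely the content hidden in the paper's appeal to ``the proof of Proposition~\ref{pr:MinElectroEner}''. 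One minor remark: in the Dirichlet case you rely on the $\sigma_m$ from \ref{Fact2} being bounded for fixed $m$, which is true for the explicit sequence~\eqref{eq:def-rhon-an} but not part of the statement of \ref{Fact2} itself, so strictly speaking you are using the construction in its proof rather than the lemma as stated.
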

	\begin{proof}
		It follows from \ref{Fact1} in Lemma \ref{lemma:l1-density} 
        and the proof of Proposition~\ref{pr:MinElectroEner} 
        that the infimum in $\rmV(Q_0,\psie)$ can equivalently 
        be taken over $\rho\in \rmC(\ol\Om)$, i.e., 
	\[
	\rmV(Q_0,\psiext)= \inf\Bigset{ \rmE(\rho,\psiext)}{ 
		\rho \in \rmC(\ol\Omega),\ \int_\Omega \rho \dd x
		=Q_0 } .
	\]
(Note that we employ the identification $\rmC(\ol \Omega) \subset \HH^\ast$ in this context.) 
Thus, given $E_0>\rmV(Q_0,\psie)$, there exists $\hat\rho\in \rmC(\ol\Om)$ with 
$\int_\Omega \hat\rho \dd x=Q_0$ such that $\rmE(\hat\rho,\psiext)\le E_0$. By 
the compatibility of $(\bfq,Q_0)$, there exists 
$\bfc\in \rmC(\ol\Om,[0,\infty)^I)$ with $\bfq{\cdot}\bfc=\hat\rho$.
As $\rmE(\hat\rho,\psiext)\le E_0$, there exists $u\in \rmC(\ol\Om,[0,\infty))$ 
such that $\mathcal{E}(\bfc,u)=E_0$.
Then, $\bfz=(\bfc,u)\in M_{E_0,Q_0}$, and the boundedness of $\bfz$ guarantees 
that $\int_\Om H(\bfz)\dd x<\infty$. Consequently, $\inf_{\wt M_{E_0,Q_0}}\mathcal{\wt H}<\infty$.
\end{proof}

In the borderline case $E_0=\rmV(Q_0,\psie)$, there may be surface 
concentrations, and  the non-emptiness of $M_{E_0,Q_0}$ hinges upon 
the choice of the data, see Remark~\ref{rem:Attainment}.

\subsection{Existence and uniqueness of optimizer}
	
\begin{proof}[Proof of Theorem~\ref{thm:ex-uniq.direct}]
Let $s^* \coleq \sup_{\wt M_{E_0,Q_0}}\mathcal{\wt S}$.
We proceed in three steps:

Step~1: \textit{Absolute continuity of optimizer.}\,
Suppose that $(\bfc,\nu)\in \wt M_{E_0,Q_0} $ satisfies 
$\mathcal{\wt S}(\bfc,\nu)=s^*$, and let 
$\nu=u\dd x+\nu^s$ denote the Lebesgue decomposition of $\nu$. 
We argue by contradiction and assume that $\nu^s(\ol\Omega)>0$. 
Then $\hat u \coleq u+|\Omega|^{-1}\nu^s(\ol\Omega)>u$ a.e.\ in 
$\Omega$, and since $u\mapsto S(\bfc,u)$ is strictly increasing, we deduce
$\mathcal S(\bfc,\hat u)>\mathcal S(\bfc,u)=s^*$. This contradicts 
the definition of $s^*$, since by construction $(\bfc,\hat u\dd x)\in \wt M_{E_0,Q_0}$.

Step~2: \textit{Uniqueness.}\, As shown in Step~1, any solution 
$\bfmu\in \wt M_{E_0,Q_0}$ of the optimization problem must be 
absolutely continuous with respect to the Lebesgue measure. 
Thus, it suffices to establish uniqueness in $M_{E_0,Q_0}$.

We argue by contradiction. Suppose that there exist  $\bfz_i=(\bfc_i,u_i)\in M_{E_0,Q_0}$, $i=0,1$, $\bfz_0\not\equiv \bfz_1$,
with $\mathcal{S}(\bfz_i)=s^*$ for $i=0,1$.
Then consider $\bfz_\vartheta=(\bfc_\vartheta,u_\vartheta)=\vartheta \bfz_1+(1{-}\vartheta)\bfz_0$, 
so that $\psi_{\bfc_\vartheta}=\vartheta\psi_{\bfc_1} + (1{-}\vartheta) \psi_{\bfc_0}$.
The strict concavity of $\mathcal{S}$ implies that 
\begin{align*}
	\mathcal{S}(\bfz_\vartheta)>s^* \quad \text{for all } \vartheta\in(0,1).
\end{align*}
On the other hand, since $\bfz\mapsto\mathcal{E}(\bfz)$ is convex, it holds that 
\begin{align*}
	\mathcal{E}(\bfz_\vartheta)\le E_0.
\end{align*}
Fix any $\vartheta\in(0,1)$, e.g., $\vartheta=1/2$. Next, choose $ \hat u_\vartheta  \in \rmL^1_+ (\Omega) $ with $\hat u_\vartheta\ge u_\vartheta$ such that 
$\mathcal{E}(\bfc_\vartheta,\hat u_\vartheta)=E_0:$ for instance, 
take $\hat u_\vartheta=u_\vartheta+\frac{1}{|\Omega|}(E_0-\mathcal{E}(\bfz_\vartheta))$.
Then, $(\bfc_\vartheta,\hat u_\vartheta)\in \rmL^1_+(\Omega)^{I+1}$ 
with $\mathcal{E}(\bfc_\vartheta,\hat u_\vartheta)=E_0$ and 
$\mathcal{Q}(\bfc_\vartheta,\hat u_\vartheta)=Q_0$.
Since $\hat u_\vartheta\ge u_\vartheta$ pointwise a.e., we must have 
$\mathcal{S}(\bfc_\vartheta,\hat u_\vartheta)\ge\mathcal{S} 
(\bfc_\vartheta, u_\vartheta)>s^*$, which contradicts the definition of $s^*$.
    
Step~3: \textit{Convergence and existence.}\,
Let $(\bfmu_k)_k\subset \wt M_{E_0,Q_0}$ be such that 
$\mathcal{\wt S}(\bfmu_k)\to s^*$. Denote $\bfmu_k \eqcol (\bfc_k,\nu_k)$.
Using the coercivity~\eqref{eq:hp.coerc.S}  of $S$ and the 
definition~\eqref{eq:energy.ext} of $\mathcal{\wt E}$, it is easy to see 
that the sequence $(\bfc_k)_k$ is uniformly integrable (as $\gamma$ in 
\eqref{eq:hp.coerc.S} is superlinear) and that the sequence $(\nu_k(\ol\Omega))_k\subset\mathbb{R}$ is bounded. 
Hence, there exists $\bfmu_*=(\bfc_*,\nu_*)\in U$ with 
$\mathcal{\wt Q}(\bfmu_*)=Q_0$ such that, after passing to a subsequence,
$\bfc_k\rightharpoonup \bfc_*$ in $\rmL^1(\Omega)$, 
$\nu_k\overset{*}{\rightharpoonup}\nu_*$ in $M_+(\ol\Omega)$.
The (weak, weak-star) upper semicontinuity of $\mathcal{\wt S}$, 
see Lemma~\ref{l:usc-measure}, then yields $\mathcal{\wt S}(\bfmu_*)\ge s^*$. 
Since  $\mathcal{\wt E}(\bfmu_k)=E_0$ for all $k$, there 
exists $\tilde \psi\in \HH$ such that along another subsequence 
\begin{align*}
    \psi_{\bfc_k}\rightharpoonup\tilde\psi\quad \text{ in }\HH,
\end{align*}
and thus, in view of the Poisson equation~\eqref{eq:poisson-parts.1}, 
Lemma~\ref{lemma:poisson.gen}, and the weak continuity of the operator $L$, also 
\[\bfq{\cdot}\bfc_k=L\psi_{\bfc_k}
\rightharpoonup 
L\tilde\psi\quad\text{ in }\HH^*.\]
On the other hand, $\bfq{\cdot}\bfc_k\rightharpoonup \bfq{\cdot}\bfc_*$ in $\rmL^1(\Omega)$, 
which by the uniqueness of the weak limit (in the sense of distributions) implies that 
$\bfq{\cdot}\bfc_*=L\tilde\psi\in\HH^*\cap \rmL^1(\Omega)$.
Consequently, $\bfmu_*\in\text{dom}\,\mathcal{\wt E} \coleq \{\mathcal{\wt E}<\infty\}$, 
$\tilde\psi=\psi_{\bfc_*}$, and $\mathcal{\wt E}(\bfmu_*)\le E_0$.

We now assert that $\mathcal{\wt E}(\bfmu_*)= E_0$. 
To show this, we argue indirecty and assume that the energy gap is 
non-trivial, i.e., $\mathcal{\wt E}(\bfmu_*)<E_0$.  Then, as before, 
we can find a finite measure $\hat\nu_*\ge\nu_*$ such that 
$\mathcal{\wt E}(\bfc_*,\hat\nu_*)=E_0$ and $\mathcal{\wt S}(\bfc_*,\hat\nu_*)>s^*$, 
where the strict inequality can be achieved due to the strict monotonicity of $u\mapsto S(\bfc,u)$. 
This, however, contradicts the definition of $s^*$, and we conclude that $\mathcal{\wt E}(\bfmu_*)= E_0$. 
Hence, $\bfmu_*=(\bfc_*,\nu_*)\in \wt M_{E_0,Q_0}$ is an optimizer and 
we obtain the asserted (weak, weak-star) convergence of the optimizing 
sequence $\bfmu_k$ to $\bfmu_*$. The strong convergence $\psi_{\bfc_k}\to\psi_{\bfc_*}$ in $\HH$ follows from the weak convergence $
\psi_{\bfc_k}\wk\psi_{\bfc_*}$ in $\HH$ combined with the fact that 
$\mathcal{\wt E}(\bfmu_k)=E_0\to\mathcal{\wt E}(\bfmu_*)$, which implies 
the convergence of the norm $\|\psi_{\bfc_k}\|_{\HH}\to\|\psi_{\bfc_*}\|_{\HH}$. 
By the uniqueness of the optimizer (see Step~2), the convergence 
results established above are true for the entire sequence $(\bfmu_k)_k$.

This completes the proof of Theorem~\ref{thm:ex-uniq.direct}.
\end{proof}

\begin{remark}[Relaxed hypotheses for existence]\label{rem:non-strict.ex.opt}
The proofs of the weak-star lower semicontinuity in Lemma~\ref{l:usc-measure} 
and of Theorem~\ref{thm:ex-uniq.direct} above show that for the mere existence of 
an absolutely continuous optimizer the strict convexity hypothesis of $H=-S$ on 
$\ol\sfD$ can be relaxed to non-strict convexity, and the assumption of a 
strict decrease of $u\mapsto H(\bfc,u)$ can be weakened to $u\mapsto H(\bfc,u)$ 
being non-increasing.
\end{remark}

\section{Relation between the two solution approaches}
\label{se:completing}

The main purpose of this section is to close the gap between the results obtained by
the dual Lagrangian approach in Section~\ref{se:proofs-LagrangianApproach} and by 
the direct optimization under constraints in Section~\ref{se:proofs-DirectMethod}. 
We provide two different ways to close the gap. 
First, we provide the proof of Theorem \ref{thm:reg-globalopt.v1}, which states 
that the global maximizer $\bfz_*=(\bfc_*,u_*)$ of $\calS$ on $M_{E_0,Q_0}$ is continuous.
Secondly,  we establish Theorem \ref{th:GlobEquilibrium} showing that the critical 
point of the Lagrange function $\calL$ provides the global 
maximizer of the entropy $\calS$ under the given constraints.

\subsection{Regularity of global optimizers}\label{ssec:reg.opt}

To show the continuity of the global optimizer, we study a 
$\delta$-regularized problem  where the smoothness of the functional allows 
us to follow the classical Hilbert-space approach. 
We are then able to show that the optimizers $ \bfz_\delta $ 
converge to $ \bfz_* $ for $\delta \to 0$ in such a way that 
continuity and uniform positivity of $ \bfz_* $ can be deduced. 

\subsubsection{A regularized problem}
We introduce the $\delta$-Moreau envelope $H_\delta$ of $H$
\begin{align}\label{eq:defMoreau}
    H_\delta(\bfz) \coleq \inf_{\bfz'\in\mathbb R^{I+1}}\Big(H(\bfz{-}\bfz') + 
    \frac{1}{2\delta}|\bfz'|^2\Big), \qquad \bfz=(\bfc,u)\in \mathbb R^{I+1},
\end{align}
which is a convex and lower semicontinuous function from $\mathbb{R}^{I+1}$ 
to $\mathbb{R}$. Owing to~\ref{hp:h.pr.cx.lsc}, the Moreau envelope of $H$ 
is exact (cf.~\cite[Prop.~12.14]{BC_2017}), i.e., for all $\bfz\in\mathbb{R}^{I+1}$ 
the infimum on the right-hand side is attained at some 
$\bar \bfz\in\mathbb{R}^{I+1}$: \ $H(\bfz-\bar \bfz)+\frac{1}{2\delta}|\bar \bfz|^2=H_\delta(\bfz)$.
Using~\ref{hp:mon.h}, it is then easy to see that $u\mapsto H_\delta(\bfc,u)$ 
is strictly decreasing. Indeed, for $\bfc\in \mathbb R^I$ 
and for $u_1<u_2$, let $\bar \bfz^{(i)}\in \mathbb{R}^{I+1}$ be such that 
\begin{align*}
H\big( (\bfc,u_i){-}\bar \bfz^{(i)} \big)+\frac{1}{2\delta} \big|\bar \bfz^{(i)} \big|^2
=H_\delta(\bfc,u_i).
\end{align*}
Then, 
\begin{align*}
\infty>H_\delta(\bfc,u_1)
&= H\big( (\bfc,u_1){-}\bar \bfz^{(1)} \big)+\frac{1}{2\delta}|\bar \bfz^{(1)}|^2 \\ 
&> H\big( (\bfc,u_2){-}\bar \bfz^{(1)} \big)+\frac{1}{2\delta}|\bar \bfz^{(1)}|^2
\ge H_\delta(\bfc,u_2).
\end{align*}
Since $H_\delta$ is $\rmC^1$ and convex, we conclude that
\begin{align}\label{eq:strict-decr.del}
\pa_uH_\delta(\bfc,u)<0\text{ for all }(\bfc,u)\in\mathbb{R}^{I+1}.
\end{align} 
An equally elementary argument using the exactness of the infimal 
convolution~\eqref{eq:defMoreau} and the $1$-convexity of $\frac{1}{2}|\cdot|^2$ 
shows that the strict convexity of $H$ is inherited by $H_\delta$.

We next discuss the behavior of $H_\delta$ at infinity. Using~\eqref{eq:hp.coerc.S} 
and the identity $\iota_{\ol\sfD}(\bfc,u)=\iota_{[0,\infty)^I}(\bfc)+\iota_{[0,\infty)}(u)$,  
we estimate for $\bfz\in \mathbb{R}^{I+1}$, 
\begin{align}
H(\bfz)\ge H_\delta(\bfz)&\ge \inf_{\bfz'\in\mathbb{R}^{I+1}}
  \Big(\gamma(\bfc {-}\bfc') - K_1\sigma((u{-}u')_+) - K_0
  +\frac{1}{2\delta}|\bfz'|^2+\iota_{\ol\sfD}(\bfz{-}\bfz')\Big) \nonumber
\\&=\hat\gamma_\delta(\bfc)+\lambda_\delta(u) - K_0, \label{eq:h.del.infty}
\end{align}
where $\hat\gamma_\delta$, $\hat\lambda_\delta$ denote the Moreau envelopes of 
$\hat\gamma \coleq \gamma + \iota_{[0,\infty)^I}$ resp.\ of
$\lambda=-K_1\sigma+\iota_{[0,\infty)}$. 
The right-hand side can be estimated below by $\hat\gamma_\delta(\bfc)+\lambda(u) {-}\tilde K_0$,
showing the sublinearity at infinity of $H_\delta(\bfc,u)$ in $u$, i.e., 
$\lim_{t\to\infty}t^{-1}H_\delta(\bfc,tu)=0$.
From~\eqref{eq:h.del.infty} we can further read off the superlinearity of 
$H_\delta(\bfc,u)$ as $|\bfc|\to\infty$ resp.\ as $u\to-\infty$. 

Finally, let us note that the behavior of the infimal convolution under Legendre 
transformation (cf.~\cite[Prop.~13.21]{BC_2017}) allows us to express 
the Legendre transform of the Moreau envelope in terms of $H^*:$  
\begin{align}\label{eq:Morenv.LT}
(H_\delta)^*(\bfxi) = H^*(\bfxi) +\frac{\delta}{2}|\bfxi|^2,\quad\bfxi\in\mathbb{R}^{I+1}.
\end{align}

\paragraph{Existence of optimizer.}
We introduce the regularized functional $\mathcal{H}_\delta:\rmL^1(\Om)^{I+1} 
\to \mathbb{R}\cup\{\infty\}$, $\mathcal{H}_\delta(\bfz) \coleq \int_\Om H_\delta(\bfz)\dd x$.
Following Subsection~\ref{ssec:ext-meas}, we extend 
$\mathcal{H}_\delta$ to a convex functional on
\[	\widehat U \coleq \rmL^1(\Om)^{I}\times M, \]
where $M$ denotes the cone of signed Radon measures on 
$\ol\Om$ whose singular part with respect to the Lebesgue measure is non-negative, that is,
\begin{align}
M \coleq \{\nu\in M(\ol\Om): \nu=u\,\dn x+\nu^s\text{ with }u\in \rmL^1(\Om),\;\nu^s\in M_+(\ol\Om)\}.
\end{align}
The extension is then defined as
$\mathcal{\whh H}_\delta:\widehat U\to\mathbb{R}\cup\{\infty\}$, 
$(\bfc,u\,\dn x+\nu^s)\mapsto \mathcal{H}_\delta(\bfc,u)$. 
We also let $\mathcal{\whh H}:\widehat U\to\mathbb{R}\cup\{\infty\}$, 
$(\bfc,u\,\dn x+\nu^s)\mapsto \int_\Om H(\bfc,u)\dd x$. 
The extended energy functional $\mathcal{\whh E}$ is defined by the same 
formulas as in~\eqref{eq:energy.ext}, and the extension of the 
total charge functional is $\mathcal{\whh Q}(\bfc,\nu) \coleq 
\int_{\Om}\bfq{\cdot}\bfc\dd x$. Let us note that both $\mathcal{E}$ 
and $\mathcal{Q}$ can be defined on the entire space 
$\rmL^1(\Om)^{I+1}$, not only the positive cone $\rmL^1_+(\Om)^{I+1}$, 
and in the present section we understand 
$\mathcal{E}:\rmL^1(\Om)^{I+1}\to\mathbb{R}\cup\{\infty\}$, $\mathcal{Q}:\rmL^1(\Om)^{I+1}\to\mathbb{R}$.
Arguments similar to those in the proof of Lemma~\ref{l:usc-measure} 
(see also~\cite{FL_2007}) show that $\mathcal{\whh H}_\delta$ is 
lower semicontinuous with respect to weak/weak-star convergence. 
More precisely, if $\bfc_j\rightharpoonup \bfc$ in $\rmL^1(\Om)^I$ 
and $\hat\nu_j=u_j\,\dn x+\nu_j$ with $u_j\rightharpoonup u$ in 
$\rmL^1(\Om)$ and $\nu_{j}\overset{*}{\rightharpoonup} \nu$ in $M_+(\ol\Om)$, then 
for $\hat\nu \coleq u\,\dn x+\nu$, it holds that
\begin{align}\label{eq:lsc.weakstar.reg}
\mathcal{\whh H}_\delta(\bfc,\hat\nu)\le\liminf_{j\to\infty}\mathcal{\whh H}_\delta(\bfc_j,\hat\nu_j).
\end{align}

Define the constrained set $\whh M_{E_0,Q_0} \coleq \{(\bfc,\nu)\in \whh U: 
\mathcal{\whh E}(\bfc,\nu)=E_0,\;\mathcal{\whh Q}(\bfc,\nu)=Q_0\}$. 
It is easy to see that $\whh M_{E_0,Q_0}\not=\emptyset$ for all $(E_0,Q_0)
\in\mathbb{R}^2$, since $\whh U$ does not involve the positivity constraint.
Since $H_\delta:\mathbb{R}^{I+1}\to\mathbb{R}$ is continuous and thus 
locally bounded, we further observe that $\sup_{\whh M_{E_0,Q_0}}
\mathcal{\whh S}_\delta>-\infty$ for all $\delta\in(0,1]$ and any 
$(E_0,Q_0)\in\mathbb{R}^2$. We can thus solve the optimization problem for 
$\mathcal{\whh S}_\delta=-\mathcal{\whh H}_\delta$ by adapting the proof 
of Theorem~\ref{thm:ex-uniq.direct} and obtain an optimizer 
$\bfz_\delta=(\bfc_\delta,u_\delta)\in \rmL^1(\Om)^{I+1}$ so that $\mathcal{E}(\bfz_\delta)=E_0,\mathcal{Q}(\bfz_\delta)=Q_0$.
In order to deal with non-vanishing negative parts of the $u$-component, 
one needs to use the superlinearity of $H_\delta(\bfc,u)$ as $u\to-\infty$.

\paragraph{Lagrange multiplier rule.}
We first note that the fact that $H_\delta$ is $\rmC^1$ and convex with at most quadratic growth at infinity implies the bound 
\begin{align}\label{eq:DH.growth}
    |\rmD H_\delta(\bfz)|\le A_{1,\delta}+A_{2,\delta}|\bfz|\quad\text{ for all }\bfz\in\mathbb R^{I+1}.
\end{align}
Since $\bfz_\delta \coleq (\bfc_\delta,u_\delta)$ is an optimizer, 
any curve of the form 
$\bfz(t)=\bfz_\delta+t\tilde \bfz$ with $\tilde \bfz\in \rmC^\infty(\ol\Om)^{I+1}$ and 
\begin{align}\label{eq:dir.perp}
\tilde \bfz\perp^{\rmL^2}\text{span}\,\{\rmD \mathcal{E}(\bfz_\delta),\rmD \mathcal{Q}(\bfz_\delta)\}
\end{align}
must obey the stationarity condition $\frac{\dn}{\dn t}\big|_{t=0}\mathcal{H}_\delta(\bfz)=0$. 
Differentiating under the integral by virtue of~\eqref{eq:DH.growth} and the dominated convergence theorem then yield
\begin{align}\label{eq:LagrMul.1}
\int_\Om \rmD H_\delta(\bfz_\delta)\cdot \tilde \bfz\dd x=0.
\end{align}
Let $P_\TS:\rmL^2(\Om)^{I+1}\to \TS$ denote the orthogonal projection onto the closed subspace
$\TS \coleq \text{span}\,\{\rmD \mathcal{E}(\bfz_\delta),\rmD \mathcal{Q}(\bfz_\delta)\}^{\perp}\subset \rmL^2(\Om)^{I+1}$. 
Then, given $\bfvarphi\in \rmC^\infty(\ol\Om)^{I+1}$, $\tilde \bfz \coleq P_\TS\bfvarphi$ satisfies~\eqref{eq:dir.perp}. 
Insertion into~\eqref{eq:LagrMul.1} yields for $\bfzeta \coleq \rmD H_\delta(\bfz_\delta)$ 
the orthogonality condition $(P_\TS\bfzeta,\bfvarphi)_{\rmL^2}=(\bfzeta,P_\TS\bfvarphi)_{\rmL^2}=0$.
Since $\bfvarphi\in \rmC^\infty(\ol\Om)^{I+1}$ was arbitrary, we infer $P_\TS\bfzeta=0$, meaning that there exist $\eta_\delta,\kappa_\delta\in\mathbb R$ such that 
\begin{align}\label{eq:LagrMultR-delta}
	\rmD \mathcal{H}_\delta(\bfz_\delta)+\eta_\delta \rmD \mathcal{E}(\bfz_\delta)+\kappa_\delta \rmD \mathcal{Q}(\bfz_\delta)=0.
\end{align}
Notice that if $\bfq\equiv0$, then $\rmD \mathcal{Q}(\bfz_\delta)\equiv0$, and the choice 
of $\kappa_\delta$ can be arbitrary in this case. For definiteness, we set $\kappa_\delta=0$ in this case.
Further note that the $(I{+}1)^{\rm st}$ component of the vectorial identity~\eqref{eq:LagrMultR-delta} 
implies that $\eta_\delta=-\pa_uH_\delta>0$, where the second step follows from~\eqref{eq:strict-decr.del}. Let 
\begin{align*}
	\bfxi_\delta \coleq \rmD H_\delta(\bfz_\delta)
	=\big({-}(\eta_\delta\Psi_\delta 
	+\kappa_\delta)\bfq,-\eta_\delta\big)^T,\qquad\text{where }\Psi_{\delta} \coleq \psi_{\bfc_\delta}+\psi_{\text{ext}}.
\end{align*}
Then, by the Fenchel equivalences,
\begin{align}\label{eq:opt.reg.lagrmulr}
	\bfc_\delta=\rmD_IH_\delta^*(\bfxi_\delta),\qquad
	u_\delta=\rmD_{I+1}H_\delta^*(\bfxi_\delta)
\end{align}
where $H_\delta^*$ denotes the Legendre transform of $H_\delta$.

\paragraph{Uniform control.} We recall that $L^{-1}$ denotes the solution operator of 
the Poisson problem~\eqref{eq:poisson-parts.1}, see Lemma~\ref{lemma:poisson.gen}. 

\begin{lemma}\label{l:mon} 
The electrostatic potential $\psi_{\bfc_\delta}=L^{-1}(\bfq{\cdot}\bfc_\delta)$ of the optimizer $\bfz_\delta$ satisfies
$\psi_{\bfc_\delta}\in \rmC(\ol\Om)\cap \HH$, and 
there exists a continuous function $C=C(\eta,\kappa)$, $\eta\in(0,\infty),\kappa\in\mathbb{R}$ 
such that for all $\delta\in(0,1]$, 
\begin{align}\label{eq:psi-del.unif}
\|\psi_{\bfc_\delta}\|_{\rmL^\infty(\Om)}\le C(\eta_\delta,\kappa_\delta).
\end{align}
\end{lemma}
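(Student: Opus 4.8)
\emph{Step 1: A scalar elliptic equation for $\Psi_\delta$.}\, The plan is to turn the pointwise optimality relations~\eqref{eq:opt.reg.lagrmulr} into a semilinear elliptic equation for the total potential $\Psi_\delta=\psi_{\bfc_\delta}+\psiext$ and then to invoke the monotone elliptic regularity theory already employed in the proof of Proposition~\ref{prop:k-minimizer}. Writing $\ol{H_\delta^*}(\mu,\eta)\coleq H_\delta^*(-\mu\bfq,-\eta)$ for the reduced dual of the Moreau envelope, the chain rule gives $\bfq{\cdot}\rmD_IH_\delta^*(-\mu\bfq,-\eta)=-\rmD_1\ol{H_\delta^*}(\mu,\eta)$. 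Evaluating this at $\mu=\eta_\delta\Psi_\delta+\kappa_\delta$ and using~\eqref{eq:opt.reg.lagrmulr}, the right-hand side of the Poisson equation becomes $\bfq{\cdot}\bfc_\delta=-\rmD_1\ol{H_\delta^*}(\eta_\delta\Psi_\delta+\kappa_\delta,\eta_\delta)$. Since $-\DIV(\eps\nabla\psi_{\bfc_\delta})=\bfq{\cdot}\bfc_\delta$ and $-\DIV(\eps\nabla\psiext)=D$, the potential $\Psi_\delta$ satisfies, in the weak sense with the mixed boundary conditions~\eqref{eq:poisson-bc},
\begin{align}\label{eq:l-mon-pde}
-\DIV(\eps\nabla\Psi_\delta)+f_\delta(\Psi_\delta)=D,\qquad f_\delta(s)\coleq\rmD_1\ol{H_\delta^*}(\eta_\delta s+\kappa_\delta,\eta_\delta).
\end{align}
Since $H_\delta$ inherits the strict convexity of $H$, its Legendre transform $H_\delta^*$ is $\rmC^1$, so $\ol{H_\delta^*}$ is $\rmC^1$ and, being convex with $\eta_\delta>0$, the nonlinearity $f_\delta$ is continuous and monotonically non-decreasing. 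Thus~\eqref{eq:l-mon-pde} is exactly of the form~\eqref{eq:regularity-troeltzsch} treated by \cite[Thm.~4.8]{Trol10OCPD}.

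\emph{Step 2: A $\delta$-uniform reference level.}\, The key observation is that the only $\delta$- and parameter-dependence needed for the a priori bound enters through the single value $f_\delta(0)$. By~\eqref{eq:Morenv.LT} one has $H_\delta^*=H^*+\tfrac\delta2|{\cdot}|^2$, whence
\begin{align}\label{eq:l-mon-split}
f_\delta(s)=\rmD_1\ol{H^*}(\eta_\delta s+\kappa_\delta,\eta_\delta)+\delta|\bfq|^2(\eta_\delta s+\kappa_\delta).
\end{align}
In particular $f_\delta(0)=\rmD_1\ol{H^*}(\kappa_\delta,\eta_\delta)+\delta|\bfq|^2\kappa_\delta$, so that $|f_\delta(0)|\le|\rmD_1\ol{H^*}(\kappa_\delta,\eta_\delta)|+|\bfq|^2|\kappa_\delta|$ for all $\delta\in(0,1]$. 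Since $\ol{H^*}$ is $\rmC^1$, the right-hand side is a continuous function of $(\eta_\delta,\kappa_\delta)\in(0,\infty)\times\R$, and the additional term in~\eqref{eq:l-mon-split} only reinforces the monotonicity of $f_\delta$ without degrading this bound.

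\emph{Step 3: Boundedness and continuity.}\, I would test the weak formulation of~\eqref{eq:l-mon-pde} with the truncations $(\Psi_\delta-k)_+$ and $-(\Psi_\delta+k)_-$ for $k\ge0$. Writing $f_\delta(\Psi_\delta)=\big(f_\delta(\Psi_\delta)-f_\delta(0)\big)+f_\delta(0)$ and using that $f_\delta(\Psi_\delta)-f_\delta(0)$ has the same sign as $\Psi_\delta$ by monotonicity, the nonlinear term drops out with a favorable sign, leaving the linear Stampacchia estimate for $-\DIV(\eps\nabla\,{\cdot}\,)$ with effective volume source $|D|+|f_\delta(0)|\in\rmL^{p_\Omega}(\Omega)$ and boundary datum $\grmR\in\rmL^{p_\rmR}(\Gamma_\rmR)$. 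As the exponents in~\ref{hp:data} are precisely those required by \cite[Thm.~4.5]{Trol10OCPD} (which carries over to our mixed boundary conditions as in the proof of Proposition~\ref{prop:k-minimizer}), this gives a bound $\|\Psi_\delta\|_{\rmL^\infty(\Omega)}\le C\big(\|D\|_{\rmL^{p_\Omega}},\|\grmR\|_{\rmL^{p_\rmR}},|f_\delta(0)|\big)$ with constant independent of $\delta\in(0,1]$; combined with Step~2 this yields $\|\Psi_\delta\|_{\rmL^\infty(\Omega)}\le C(\eta_\delta,\kappa_\delta)$ with $C$ continuous. Once $\Psi_\delta$ is bounded, $f_\delta(\Psi_\delta)\in\rmL^\infty(\Omega)\subset\rmL^{p_\Omega}(\Omega)$, and \cite[Thm.~4.8]{Trol10OCPD} upgrades $\Psi_\delta$ to $\rmC(\ol\Omega)\cap\HH$. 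Since $\psiext\in\rmC(\ol\Omega)\cap\HH$ by Lemma~\ref{lemma:poisson}, it follows that $\psi_{\bfc_\delta}=\Psi_\delta-\psiext\in\rmC(\ol\Omega)\cap\HH$ and $\|\psi_{\bfc_\delta}\|_{\rmL^\infty}\le\|\Psi_\delta\|_{\rmL^\infty}+\|\psiext\|_{\rmL^\infty}\eqcol C(\eta_\delta,\kappa_\delta)$, as asserted.

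\emph{Main obstacle.}\, The delicate point is the $\delta$-\emph{uniformity} of the $\rmL^\infty$ estimate together with its continuous dependence on $(\eta_\delta,\kappa_\delta)$. The truncation argument must be organised so that the Stampacchia constant depends only on $\eps$, $\Omega$ and the integrability exponents — not on $\delta$ — and so that the entire $(\eta_\delta,\kappa_\delta)$- and $\delta$-dependence is funneled through the reference value $f_\delta(0)$ handled in Step~2. Here one must be careful \emph{not} to exploit quantitatively the regularising term $\delta|\bfq|^2(\eta_\delta s+\kappa_\delta)$ in~\eqref{eq:l-mon-split}: although it carries the correct monotone sign, its coefficient degenerates as $\delta\to0$, so only the monotonicity of $f_\delta$, and not any coercivity, may be used. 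A minor additional subtlety is that~\eqref{eq:opt.reg.lagrmulr} has to be known to hold pointwise a.e.\ with $\bfq{\cdot}\bfc_\delta\in\rmL^1(\Omega)\cap\HH^*$, which is guaranteed by $\bfz_\delta\in\mathrm{dom}\,\mathcal{\whh E}$.
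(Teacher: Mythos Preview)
Your proof is correct and follows essentially the same route as the paper: both write a monotone semilinear elliptic equation for the potential, invoke the Tr\"oltzsch regularity theory, and extract the $\delta$-uniform bound from the Moreau formula~\eqref{eq:Morenv.LT} via $|f_\delta(0)|\le|\rmD_1\ol{H^*}(\kappa_\delta,\eta_\delta)|+|\bfq|^2|\kappa_\delta|$. The only cosmetic difference is that the paper writes the equation for $\psi_{\bfc_\delta}$ (with $\psiext$ absorbed into an $x$-dependent nonlinearity $f_\delta(\cdot,\psi)$) rather than for $\Psi_\delta$, and then identifies $\psi_{\bfc_\delta}$ with the continuous solution produced by Tr\"oltzsch's existence theory through a strong-monotonicity uniqueness argument, whereas you apply the Stampacchia truncations directly to the given $\Psi_\delta\in\HH$.
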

\begin{proof}
The idea of the proof is similar to that in Step~3 of the proof of Proposition~\ref{prop:k-minimizer}.
Recall from the proof of Theorem~\ref{thm:ex-uniq.direct} that $L\psi_{\bfc_\delta}-\bfq\cdot \bfc_\delta=0$ in $\HH^*\cap \rmL^1(\Om)$, and define the convex functional
\begin{align*}
	F_\delta(\psi)& \coleq \frac{1}{\eta_\delta}\mathcal{H}_\delta^*\big({-}\eta_\delta \bfq\, \psi{-}(\eta_\delta\psi_{\text{ext}}
	{+}\kappa_\delta)\bfq,-\eta_\delta\big)
	\\&\phantom{:}=\frac{1}{\eta_\delta}\mathcal{H}^*\big({-}\eta_\delta \bfq\, \psi{-}(\eta_\delta\psi_{\text{ext}}
	{+}\kappa_\delta)\bfq,-\eta_\delta\big)+\frac{1}{\eta_\delta}\frac\delta2\big(\|\eta_\delta \bfq\, \psi{+}(\eta_\delta\psi_{\text{ext}}
	{+}\kappa_\delta)\bfq\|_{\rmL^2}^2+\|\eta_\delta\|_{\rmL^2}^2\big).
\end{align*}
Owing to~\eqref{eq:opt.reg.lagrmulr}, 
$\psi \coleq \psi_{\bfc_\delta}$ satisfies the monotonic equation 
\begin{align}\label{eq:PoissonH1dual.del}
	L\psi+f_\delta(\cdot,\psi)=0\quad\text{in }\HH^*,
\end{align}
where 
\[	f_\delta(\cdot,\psi)=\frac{\delta}{\delta\psi}F_\delta.\]
Under the present hypotheses, classical monotonicity and elliptic regularity, 
cf.~\cite[Chap.~4]{Trol10OCPD} (in particular Theorem~4.10 therein), yield  
the existence of a (unique) continuous weak solution 
$\tilde\psi\in \rmC(\ol\Om)\cap \HH$ of equation~\eqref{eq:PoissonH1dual.del}. 
We assert that $\psi_{\bfc_\delta}$ coincides with $\tilde\psi$. This is a 
consequence of the strong monotonicity of the operator 
underlying~\eqref{eq:PoissonH1dual.del}: taking the difference of the 
equations for $\psi_{\bfc_\delta}$ and $\tilde \psi$, which both hold 
in $\HH^*$, and testing it with $\psi_{\bfc_\delta}-\tilde \psi\in \HH$, 
we find for some $\alpha_0>0$ (cf.~\eqref{eq:Lcoerc})
\begin{align*}
0=\la L(\psi_{\bfc_\delta}{-}\tilde\psi),\psi_{\bfc_\delta} {-} \tilde\psi\ra 
+\la f_\delta(\cdot,\psi_{\bfc_\delta}){-}f_\delta(\cdot,\tilde\psi),
  \psi_{\bfc_\delta}{-}\tilde\psi\ra&\ge \la L(\psi_{\bfc_\delta}{-}\tilde\psi),\psi_{\bfc_\delta}{-}\tilde\psi\ra
  \\&\ge\alpha_0\|\psi_{\bfc_\delta}{-}\tilde\psi\|_{\rmH^1}^2.
\end{align*} 
Hence, $\psi_{\bfc_\delta}\equiv\tilde\psi\in \rmC(\ol\Om)\cap \HH$. 
The bound~\eqref{eq:psi-del.unif} with 
\[C(\eta_\delta,\kappa_\delta)=\tilde C\|f_\delta(\cdot,0)\|_{\rmL^\infty}=\tilde C\|\bfq\cdot \rmD_I\mathcal{H}_\delta^*({-}(\eta_\delta\psi_{\text{ext}}
{+}\kappa_\delta)\bfq,-\eta_\delta)\|_{\rmL^\infty}\]
can be obtained as in~\cite[Thm.~4.8, 4.10]{Trol10OCPD}, see the proof of 
\cite[Thm.~7.6]{Trol10OCPD},  by invoking once more the strong monotonicity 
of the operator $\psi\mapsto L\psi+f_\delta(\cdot,\psi)$. We note that 
the constant $\tilde C\in[1,\infty)$ will in general depend on the 
coercivity constant $\alpha_0>0$ in Lemma~\ref{lemma:poisson.gen}.
\end{proof}

\subsubsection{Convergence to the original problem}

\begin{lemma}[Lim-inf estimate]\label{l:liminf-est} Let  
$\bfmu_\delta=(\bfc_\delta,\hat\nu_\delta),\bfmu=(\bfc,\nu)\in \widehat U$ and suppose that 
$\bfmu_\delta\wk\bfmu$ in $\whh U$ in the sense that $\bfc_\delta\wk \bfc$ in 
$\rmL^1(\Om)^{I}$ and $\nu_\delta\weakstar\nu$ in $M(\ol\Om)$. 
Then, 
\begin{align*}
\mathcal{\whh H}(\bfmu)\le\liminf_{\delta\downarrow0}\mathcal{\whh H}_\delta(\bfmu_\delta).
\end{align*}	
\end{lemma}
\begin{proof}
We first show for all $\bfmu\in\widehat U$ the pointwise convergence
	\begin{align}\label{eq:Moreau.ptw}
\lim_{\delta\downarrow0}\mathcal{\whh H}_\delta(\bfmu)=\mathcal{\whh H}(\bfmu).
	\end{align}
To this end note that $H_\delta(\bfz)\uparrow H(\bfz)$ for all $\bfz\in\mathbb{R}^{I+1}$.
We split $H_\delta(\bfz)=H_\delta^+(\bfz)+H_\delta^-(\bfz)$ into positive and negative part.
Then, for any $\bfz\in \rmL^1(\Om)^{I+1}$, the monotone convergence theorem implies that 
$\lim_{\delta\downarrow0}\int_{\Om}H_\delta^+(\bfz)\dd x= \int_{\Om}H^+(\bfz)\dd x$. 
The negative part is $\delta$-uniformly integrable due to the strict 
sublinearity of $H_\delta(\bfc,u)$ as $u\to\infty$ (cf.~\eqref{eq:h.del.infty}) 
ensuring the convergence $\lim_{\delta\downarrow0}\int_{\Om}H_\delta^-(\bfz)\dd x
= \int_{\Om}H^-(\bfz)\dd x$. In combination, $\lim_{\delta\downarrow0}
\mathcal{H}_\delta(\bfz)=\mathcal{H}(\bfz)$ for all  $\bfz\in \rmL^1(\Om)^{I+1}$, 
and recalling the definition of the extended negative entropies 
$\mathcal{\whh H}$, $\mathcal{\whh H}_\delta$, we deduce~\eqref{eq:Moreau.ptw}.

To show the asserted liminf inequality, let $0<\delta<\ve\le 1$. 
Then $\mathcal{\whh H}_\delta\ge\mathcal{\whh H}_\ve$, and hence
	\begin{align*}
	\liminf_{\delta\downarrow0}\mathcal{\whh H}_\delta(\bfmu_\delta)\ge 
			\limsup_{\ve\downarrow0}\liminf_{\delta\downarrow0}\mathcal{\whh H}_\ve(\bfmu_\delta)
            \ge \limsup_{\ve\downarrow0}\mathcal{\whh H}_\ve(\bfmu)=\mathcal{\whh H}(\bfmu),
	\end{align*}
	where the second inequality uses~\eqref{eq:lsc.weakstar.reg}, and the last step follows from~\eqref{eq:Moreau.ptw}.
\end{proof}

\begin{proposition}[Convergence of optimizers]\label{prop:conv-opt}
	Let $\bfz_\delta=(\bfc_\delta,u_\delta)\in \rmL^1(\Om)^{I+1}$ denote the optimizer of the regularized problem, and suppose that 
	$\inf_{\wt M_{E_0,Q_0}}\mathcal{\wt H}<+\infty$.
	Then, $\bfz_\delta\rightharpoonup \bfz_*$ in $\rmL^1(\Om)^{I+1}$, where $\bfz_*\in \rmL^1_+(\Om)^{I+1}$ denotes the unique optimizer constructed in Theorem~\ref{thm:ex-uniq.direct}.	
	Moreover, $\psi_{\bfc_\delta}\to\psi_{\bfc_*}$ in $\HH$ in the strong sense.
\end{proposition}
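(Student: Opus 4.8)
The plan is to treat $(\mathcal{H}_\delta)_\delta$ as a $\Gamma$-converging family, extract subsequential weak limits by coercivity, and use the \emph{uniqueness} of the optimizer from Theorem~\ref{thm:ex-uniq.direct} to promote subsequential convergence to convergence of the whole sequence. First I would record a uniform bound on the regularized entropy: since $H_\delta\le H$ and the unperturbed optimizer $(\bfc_*,u_*)$ from Theorem~\ref{thm:ex-uniq.direct} embeds as the competitor $(\bfc_*,u_*\,\dn x)\in\whh M_{E_0,Q_0}$, optimality of $\bfz_\delta$ gives
\[
\mathcal{\whh H}_\delta(\bfz_\delta)=\inf_{\whh M_{E_0,Q_0}}\mathcal{\whh H}_\delta\le \mathcal{H}_\delta(\bfc_*,u_*)\le \mathcal{H}(\bfc_*,u_*)=\inf_{\wt M_{E_0,Q_0}}\mathcal{\wt H}<\infty .
\]
Combined with the $\delta$-uniform lower bound~\eqref{eq:h.del.infty}, superlinearity of $\gamma$ yields uniform integrability of $(\bfc_\delta)_\delta$, so $\bfc_\delta\rightharpoonup\bfc_*$ in $\rmL^1(\Om)^I$ along a subsequence. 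Crucially, the quadratic penalties that the Moreau envelope places on the forbidden directions in~\eqref{eq:h.del.infty} give $\tfrac1{2\delta}\int_\Om|\bfc_\delta^-|^2\le C$ and $\tfrac1{2\delta}\int_\Om|u_\delta^-|^2\le C$, whence $\bfc_\delta^-\to0$ and $u_\delta^-\to0$ in $\rmL^2(\Om)$. Thus $\bfc_*\ge0$ and the weak-$*$ limit $\nu_*$ of $u_\delta\,\dn x$ lies in $M_+(\ol\Om)$. The energy constraint bounds $(\psi_{\bfc_\delta})_\delta$ in $\HH$ and the total mass of $(u_\delta\,\dn x)_\delta$; passing to the limit and identifying $\bfq{\cdot}\bfc_\delta=L\psi_{\bfc_\delta}\rightharpoonup L\tilde\psi$ with $\tilde\psi=\psi_{\bfc_*}$ by distributional uniqueness (exactly as in Step~3 of the proof of Theorem~\ref{thm:ex-uniq.direct}) fixes the potential limit, and the charge constraint passes by weak $\rmL^1$-continuity.

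Next I would identify the limit and establish the strong convergence of the potentials. Testing $u_\delta\,\dn x\weakstar\nu_*$ against the constant $1\in\rmC(\ol\Om)$ gives $\int_\Om u_\delta\,\dn x\to\nu_*(\ol\Om)$, so from the energy identity $\tfrac12\calB(\psi_{\bfc_\delta}{+}\psiext)=E_0-\int_\Om u_\delta\,\dn x\to E_0-\nu_*(\ol\Om)=\tfrac12\calB(\psi_{\bfc_*}{+}\psiext)$. Since $\calB(\cdot)^{1/2}$ is an equivalent norm on $\HH$ by coercivity, this norm convergence together with $\psi_{\bfc_\delta}\rightharpoonup\psi_{\bfc_*}$ upgrades to strong convergence $\psi_{\bfc_\delta}\to\psi_{\bfc_*}$ in $\HH$, and in particular $\mathcal{\wt E}(\bfc_*,\nu_*)=E_0$, so $(\bfc_*,\nu_*)\in\wt M_{E_0,Q_0}$. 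The lim-inf inequality of Lemma~\ref{l:liminf-est} then yields $\mathcal{\wt H}(\bfc_*,\nu_*)=\mathcal{\whh H}(\bfc_*,\nu_*)\le\liminf_\delta\mathcal{\whh H}_\delta(\bfz_\delta)\le\inf_{\wt M_{E_0,Q_0}}\mathcal{\wt H}$, identifying $(\bfc_*,\nu_*)$ as a minimizer of $\mathcal{\wt H}$ on $\wt M_{E_0,Q_0}$. By the uniqueness part of Theorem~\ref{thm:ex-uniq.direct} it must coincide with the optimizer, so $\nu_*=u_*\,\dn x$ is absolutely continuous, and since the limit is unique the entire sequence converges.

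It then remains to upgrade $u_\delta\,\dn x\weakstar u_*\,\dn x$ to genuine weak convergence $u_\delta\rightharpoonup u_*$ in $\rmL^1(\Om)$, and this is where I expect the main obstacle to lie. The difficulty is that the entropy is only \emph{sublinear} in $u$, so weak-$*$ convergence to an absolutely continuous measure does not by itself prevent the positive parts $u_\delta^+$ from concentrating: the standard equidistributed-spike example shows that uniform integrability can fail even when the limit is a density and the masses converge. To rule this out I would exploit the monotone structure at the optimizer. By~\eqref{eq:opt.reg.lagrmulr} we have $u_\delta=\rmD_{I+1}H_\delta^*(\bfxi_\delta)$ with $\bfxi_\delta=\big({-}(\eta_\delta\Psi_\delta{+}\kappa_\delta)\bfq,-\eta_\delta\big)$ and $\Psi_\delta=\psi_{\bfc_\delta}+\psiext$, while Lemma~\ref{l:mon} supplies the bound $\|\psi_{\bfc_\delta}\|_{\rmL^\infty}\le C(\eta_\delta,\kappa_\delta)$. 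Once the multipliers $(\eta_\delta,\kappa_\delta)$ are confined to a compact subset of $(0,\infty)\times\mathbb R$ — in particular $\eta_\delta$ bounded away from $0$ and $\infty$ — the values of $\bfxi_\delta$ stay in a fixed compact subset of $\mathbb R^I\times(-\infty,0)$, on which $\rmD H_\delta^*=\rmD H^*+\delta\,\mathrm{id}$ is bounded uniformly in $\delta\in(0,1]$; this forces a uniform $\rmL^\infty$ bound on $u_\delta$, hence uniform integrability and the desired weak $\rmL^1$ convergence.

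The genuinely hard point is therefore the uniform control of the Lagrange multipliers. Reading the $(I{+}1)$-st line of~\eqref{eq:LagrMultR-delta}, $\eta_\delta=-\pa_uH_\delta(\bfc_\delta,u_\delta)$ is the (spatially constant) inverse temperature, and its non-degeneracy has to be extracted from the energy and charge constraints. A blow-up $\eta_\delta\to0$ would drive $\int_\Om u_\delta\,\dn x\to\infty$ and violate the uniform energy bound, whereas $\eta_\delta\to\infty$ must be excluded by using that the admissible internal energy cannot collapse when $E_0$ strictly exceeds the minimal electrostatic energy $\rmV(Q_0,\psiext)$ of Proposition~\ref{pr:MinElectroEner}; the boundedness of $\kappa_\delta$ then follows from the charge constraint together with the now-controlled $\eta_\delta$ and the strong convergence of $\psi_{\bfc_\delta}$. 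With these multiplier bounds in hand, the $\rmL^\infty$-route of the previous paragraph closes the argument and, incidentally, re-confirms $\bfc_*\ge0$ since the negative parts of $\bfc_\delta=\rmD_IH^*(\bfxi_\delta)+\delta\,\bfxi_{\delta,I}$ are $O(\delta)$.
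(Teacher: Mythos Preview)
Your second paragraph contains a circular step. The equality you write, $E_0-\nu_*(\ol\Om)=\tfrac12\calB(\psi_{\bfc_*}{+}\psiext)$, is precisely the statement $\mathcal{\wt E}(\bfc_*,\nu_*)=E_0$ that you are trying to establish. From the convergence $\calB(\psi_{\bfc_\delta}{+}\psiext)\to 2(E_0-\nu_*(\ol\Om))$ together with $\psi_{\bfc_\delta}\rightharpoonup\psi_{\bfc_*}$ in $\HH$ you only obtain $\calB(\psi_{\bfc_*}{+}\psiext)\le 2(E_0-\nu_*(\ol\Om))$ by weak lower semicontinuity, i.e.\ $\mathcal{\wt E}(\bfc_*,\nu_*)\le E_0$. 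The paper proceeds in the correct order: first $\mathcal{\whh E}(\bfmu_0)\le E_0$, then Lemma~\ref{l:liminf-est} gives $\mathcal{\whh H}(\bfmu_0)\le\alpha_0<\infty$ (which in particular forces $\bfmu_0\in\wt U$, i.e.\ nonnegativity of the limit without your $\rmL^2$ penalty argument), and only then does the strict monotonicity of $u\mapsto H(\bfc,u)$ rule out a nontrivial energy gap by constructing a strictly better competitor. Strong convergence of the potentials is deduced \emph{a posteriori} from the resulting equality $\mathcal{\wt E}(\bfmu_0)=E_0$.

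On the upgrade from $u_\delta\,\dn x\weakstar u_*\,\dn x$ to $u_\delta\rightharpoonup u_*$ in $\rmL^1$: your concern is well founded, and your spike counterexample is apt. The paper's own proof of Proposition~\ref{prop:conv-opt} in fact establishes only convergence in $\whh U$ (weak $\rmL^1$ for $\bfc_\delta$, weak-$*$ for $u_\delta\,\dn x$); the decomposition $u_\delta=u_{\delta,1}+u_{\delta,2}$ with $u_{\delta,2}\,\dn x\weakstar\nu_0^s$ does not yield $u_{\delta,2}\rightharpoonup0$ in $\rmL^1$ even once one knows $\nu_0^s$ is absolutely continuous. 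The route you propose---bounding $(\eta_\delta,\kappa_\delta)$ and invoking Lemma~\ref{l:mon} to obtain $\delta$-uniform $\rmL^\infty$ control of $\bfz_\delta$---is exactly what the paper does, but in the proof of Theorem~\ref{thm:reg-globalopt.v1}, where the extra hypothesis of coercivity of $\calK$ (cf.~\eqref{eq:coerc.K}) is in force. That coercivity is what pins $(\eta_\delta,1/\eta_\delta,\kappa_\delta)$ to a compact set; under the bare hypothesis $\inf_{\wt M_{E_0,Q_0}}\mathcal{\wt H}<\infty$ of the proposition it is not available, so your third and fourth paragraphs effectively anticipate the next theorem rather than complete the present one.
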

\begin{proof}
	Since $\mathcal{E}(\bfz_\delta)=E_0$, there exists $\tilde\psi\in\HH$ and a subsequence $\delta\downarrow0$ such that $\psi_{\bfc_\delta} \wk \tilde\psi$ in $\HH$.
	Furthermore, owing to~\eqref{eq:h.del.infty}, we can extract another subsequence such that, $\bfc_\delta\rightharpoonup \bfc_0$ in $\rmL^1$, 
	$u_{\delta,1}\rightharpoonup u_0$ in $\rmL^1$,
	$u_{\delta,2}\dn x\overset{*}{\rightharpoonup}\nu_0^s$ in $M_+(\ol\Om)$,
for suitable $\bfc_0\in \rmL^1(\Om)^I,u_0\in \rmL^1(\Om)$, and  $\nu_0^s\in M_+(\ol\Om)$, 
and certain $u_{\delta,1}\in \rmL^1$, $u_{\delta,1}\in \rmL^1_+$ with  $u_{\delta,1}+u_{\delta,2}=u_{\delta}$. Let $\nu_0 \coleq u_0\dn x+\nu_0^s$ and $\bfmu_0 \coleq (\bfc_0,\nu_0)$.
Then, $\mathcal{\whh Q}(\bfmu_0)=Q_0$ and $\mathcal{\whh E}(\bfmu_0)\le E_0$.
We want to show that $\bfmu_0\in \wt M_{E_0,Q_0}$ with $\mathcal{\wt H}(\bfmu_0)=\alpha_0$, where $\alpha_0 \coleq \inf_{\wt M_{E_0,Q_0}}\mathcal{\wt H}$. Note that $\alpha_0<\infty$ by hypothesis. 
We now let $\alpha_\delta \coleq \inf_{\whh M_{E_0,Q_0}}\mathcal{\whh H}_\delta =
\mathcal{\whh H}_\delta(\bfz_\delta)$. Since $H_\delta\le H$, it holds that 
$\alpha_\delta\le\alpha_0$. The convergence $(\bfc_\delta,u_\delta\,\dn x) \wk\bfmu_0$ 
in $\whh U$ and Lemma~\ref{l:liminf-est} thus imply that
\begin{align*}
\mathcal{\whh H}(\bfmu_0)\le\liminf_{\delta\downarrow0}\alpha_\delta\le\alpha_0.
\end{align*}
In particular, $\mathcal{\whh H}(\bfmu_0)<\infty$, which shows that 
$\bfmu_0\in \wt U$ and $\mathcal{\whh H}(\bfmu_0)=\mathcal{\wt H}(\bfmu_0)$.
As in the proof of Theorem~\ref{thm:ex-uniq.direct}, we can now use the strict 
decrease of $u\mapsto H(\bfc,u)$ to deduce that the energy gap is trivial, i.e., 
$\mathcal{\whh E}(\bfmu_0)=\mathcal{\wt E}(\bfmu_0)=E_0$. Therefore, $\bfmu_0\in 
\wt M_{E_0,Q_0}$ and $\mathcal{\wt H}(\bfmu_0)=\alpha_0$. Thus, $\bfmu_0$ 
coincides with the unique optimizer $\bfmu_*$ constructed in Theorem~\ref{thm:ex-uniq.direct}, 
and must hence be absolutely continuous, i.e., $\bfmu_0=(\bfc_*,u_*\dn x)$ 
for some $\bfz_* \coleq (\bfc_*,u_*)\in \rmL^1_+(\Om)^{I+1}$. 
The strong convergence $\psi_{\bfc_\delta}\to\psi_{\bfc_*}$ in $\HH$  follows as in the proof of Theorem~\ref{thm:ex-uniq.direct}.
\end{proof}

\subsubsection{Regularity of the global optimizer}

We are now in a position to establish the main result of this subsection on the regularity of the optimizer. 
The analysis below is based on the functional $\mathcal{K}$ (cf.~\eqref{eq:k-functional}) introduced in Section~\ref{su:Derive.whK}, and its generalizations to $\delta\in[0,1]$ given by 
$\mathcal{K}_\delta:(0,\infty) \times \R \times \HH \to\mathbb{R}\cup\{\infty\}$ with 
\begin{align}
	\label{eq:k-functional.delta}
	\calK_\delta(\eta, \kappa, \lambda) \coleq \int_\Omega H^*_\delta\big({-}(\kappa{+}\lambda)\bfq, -
	\eta\big) \dd x + \kappa \,Q_0 + \eta\,E_0 - \bbB(\lambda,\psiext) +
	\frac1{2\eta} \calB(\lambda),
\end{align}
where $H_\delta^*(\bfxi)=H^*(\bfxi)+\frac{\delta}{2}|\bfxi|^2$ for $\delta\in[0,1]$.
Notice that the functionals $\mathcal{K}_\delta$ are convex for all $\delta\in[0,1]$ (cf.\ Section~\ref{su:exist-minimizers} for details) and that $\mathcal{K}=\mathcal{K}_0$.

From Lemma~\ref{l:mon}, we recall that $\psi_{\bfc_\delta} \in \rmC(\ol\Om)$, and hence, 
$\lambda_\delta \coleq \eta_\delta(\psi_{\bfc_\delta}{+}\psie)\in \rmC(\ol\Om)$. The 
stationarity condition for any (regular) critical point of $\mathcal{K}_\delta$ is satisfied by $(\eta_\delta,\kappa_\delta,\lambda_\delta)$ due to~\eqref{eq:LagrMultR-delta} and the Poisson equation. Hence, $(\eta_\delta,\kappa_\delta,\lambda_\delta)$ is a minimizer. 
\medskip

We are now ready to finish the proof of Theorem \ref{thm:reg-globalopt.v1}, which states that the maximizer $\bfz_*=(\bfc_*,u_*)$ is continuous and uniformly positive.\smallskip 

\noindent 
\begin{proof}[Proof of Theorem~\ref{thm:reg-globalopt.v1}]
	Since $H_\delta^*\ge H^*$ and thus $\mathcal{K}_\delta\ge\mathcal{K}$, the coercivity~\eqref{eq:coerc.K} of $\mathcal{K}$
	  implies that of $\mathcal{K}_\delta$ uniformly in $\delta\in(0,1]$.
	Hence, the sequence  $(\eta_\delta,\kappa_\delta,\lambda_\delta)_\delta$ of minimizers of $\mathcal{K}_\delta$ is uniformly bounded in $(0,\infty) \times \R \times \HH$, and $(\eta_\delta)_\delta$ is bounded away from zero. 
	Thus, along a subsequence, $\eta_\delta\to\eta_0$ for some $\eta_0\in(0,\infty)$, $\kappa_\delta\to\kappa_0$ for some $\kappa_0\in \mathbb{R}$, and $\lambda_\delta\wk\lambda_0$ in  $\HH$ and $\lambda_\delta\to\lambda_0$ a.e.\ in $\Om$
	for some $\lambda_0\in \HH$.
	It follows that $\psi_{\bfc_\delta}\wk\psi_0$ in $\HH$, where $\psi_0 \coleq \eta_0^{-1}\lambda_0-\psie$. 
	Owing to Lemma~\ref{l:mon}, we infer the regularity $\psi_0,\lambda_0\in \rmL^\infty(\Om)$.
	Consequently, 
	  $$\bfxi_0 \coleq \big({-}\eta_0\;\!(\psi_0{+}\psie)\bfq-\kappa_0 \bfq,-\eta_0 \big)^T\in \rmL^\infty(\ol\Om,\mathsf C)$$ 
	for a closed convex set $\mathsf C \Subset \text{dom}(\rmD H^*)=\mathbb{R}^I\times(-\infty,0)$,
	and we have the pointwise convergence $\bfz_\delta= \rmD H_\delta^*(\bfxi_\delta)\to \rmD H^*(\bfxi_0)$. 
	Thanks to formula~\eqref{eq:LagrMultR-delta} and the boundedness of $(\eta_\delta,1/\eta_\delta)$, $(\kappa_\delta)$, and of $(\psi_\delta)\subset \rmC(\ol\Om)$, the sequence $(\bfz_\delta)\subset \rmC(\ol\Om)^{I+1}$ must be bounded.
	We therefore conclude that  $\bfz_0 \coleq  \rmD H^*(\bfxi_0)\in \rmL^\infty(\ol\Om; \rmD H^*(\mathsf C))$. Since $\rmD H^*(\mathsf C)\Subset(0,\infty)^{I+1}$, this implies the control of the values $\bfz_0(\ol\Om)\Subset(0,\infty)^{I+1}$. 
    Further note that the definition of $\bfz_0$ implies that
	\begin{align*}
	\rmD\mathcal{H}(\bfz_0)+\eta_0 \rmD\mathcal{E}(\bfz_0)+\kappa_0 \rmD\mathcal{Q}(\bfz_0)=0.
	\end{align*}
	The continuity of $\bfz_0$ is deduced by applying~\cite[Thm.~4.8, 4.9]{Trol10OCPD} to the Poisson equation for $\psi_0$.
	
	It is easy to see that $\mathcal{K}_\delta$ converges to $\mathcal{K}$ in the sense of $\Gamma$-convergence on $(0,\infty) \times \R \times \HH$, using 	
	the weak lower semicontinuity of $\mathcal{K}$, the inequality $\mathcal{K}_\delta\ge\mathcal{K}$, and the pointwise convergence $\mathcal{K}_\delta\to \mathcal{K}$ on $(0,\infty) \times \R \times \HH$.
	Hence, the limit $(\eta_0,\kappa_0,\lambda_0)$ is a minimizer of $\mathcal{K}$. 
	It follows that $\pa_\eta\mathcal{K}_{|(\eta_0,\kappa_0,\lambda_0)}=0$, meaning that $\mathcal{E}(\bfz_0)=E_0$. At the same time, $\bfz_0\in \rmL^1_+(\Om)^{I+1}$ and $\mathcal{Q}(\bfz_0)=Q_0$. Thus, $\bfz_0\in M_{E_0,Q_0}$. 
	Furthermore, the regularity of $\bfz_0$ implies that $\mathcal{S}(\bfz_0)=-\mathcal{H}(\bfz_0)>-\infty$. 
	Combining the observations above with 
    Proposition~\ref{prop:conv-opt}, we conclude that $\bfz_0=\bfz_*\in M_{E_0,Q_0}$ is the 
	unique solution of the optimization problem $\mathcal{S}(\bfz_*)=\sup_{M_{E_0,Q_0}}\mathcal{S}$.
\end{proof}

\subsection{Global optimality of critical points}

We first establish the equivalence between constrained critical 
points of $\calS$ and constrained local equilibrium solutions to 
\eqref{eq:general-system}. The proof of Theorem \ref{th:GlobEquilibrium} 
on the existence of a unique global equilibrium is given subsequently. 

\bigskip  
\noindent
\begin{proof}[Proof of Proposition~\ref{prop:crit=equil}]\label{page:proofProp}
 	The proof is based on an adaptation of~\cite[Chap.~43]{Zeidler_1985_variational}. 
    To simplify notation, we only present the argument in the case $\HH = \rmH^1(\Omega)$.
 	First, let us note that $\FF\subset \rmC(\ol\Omega)^{I+1}$ is open and that
 	$\calS,\calE,\calQ:\FF\subset \rmC(\ol\Omega)^{I+1}\to\mathbb R$ are continuously Fr\'echet differentiable.
 	The formulas~\eqref{eq:derivatives-energy-charge} show that the Fr\'echet derivatives $\rmD\calE(\bfz):\rmC(\ol\Omega)^{I+1}\to\mathbb R$, 
 	$\rmD\calQ(\bfz):\rmC(\ol\Omega)^{I+1}\to\mathbb R$ are surjective 
 	for all $\bfz\in\FF$ provided $\bfq\not\equiv0$ 
 	(which we assume throughout the proof). 
 	If $\bfq \equiv 0$, all arguments of the subsequent proof hold if the 
 	charge constraint $\calQ(\bfz) = Q_0$ is skipped. The statement of the proposition 
 	(including the charge constraint) then trivially follows.
 	In terms of 
 	$$\mathcal G(\bfz)\coleq \big(\calE(\bfz){-}E_0,\calQ(\bfz){-}Q_0\big):\FF\to\mathbb R^2,$$ 
 	the constraint becomes $\mathcal G=0$.
 	It is easy to verify that $\mathcal G$ is a submersion in the sense of~\cite[Def.~43.15]{Zeidler_1985_variational}. 
 	Therefore, the constraint set $M\coleq \{\bfz\in\FF:\mathcal G(\bfz)=0\}$ forms a $\rmC^1$ manifold in $\rmC(\ol\Omega)^{I+1}$, and the basic hypotheses of~\cite[Thm.~43.D]{Zeidler_1985_variational} are fulfilled. 
 	We now prove the two directions separately. The first is a direct application of~\cite[Thm.~43.D~(1)]{Zeidler_1985_variational}, while the second requires several adjustments due to discrepancies in the functional setting linked to the singularity of $\rmD S(\bfc,u)$ as $\min_ic_i\to0$.\smallskip
 	
 	Re~1: 
 	Let $\bfz_*\in\FF$ be a local equilibrium solution of $\calS$ with respect to the side condition $\mathcal G=0$. Then, invoking~\cite[Thm.~43.D~(1)]{Zeidler_1985_variational}, there exist $(\eta,\kappa)\in\mathbb R^2$ such that $\rmD\calS(\bfz_*)=\eta\rmD\calE(\bfz_*)+\kappa \rmD\calQ(\bfz_*)$. Hence, $\bfz_*$ is a constrained critical point in the sense of Definition~\ref{def:equilibrium}.
 	\smallskip
 	
 	Re~2: For this direction, we will use the fact that, under the present hypotheses on $\calS$, the functionals $\calS,\calE,\calQ:\FF\subset \rmC(\ol\Omega)^{I+1}\to\mathbb R$ are twice continuously Fr\'echet differentiable.
 	Let $\bfz_*$ be a critical point of $\calS$ with respect to the constraints $\calE=E_0,\calQ=Q_0$, i.e., a
 	solution to 
 	the equation $\rmD\calS(\bfz_*)=\eta \rmD\calE(\bfz_*)+\kappa \rmD\calQ(\bfz_*)$ for suitable $(\eta,\kappa)\in \mathbb R^2$. For later usage, we note that $\eta$ must be positive since the equation implies that $\eta = \rmD_u S (\bfz_\ast) >0$ as a consequence of~\eqref{eq:derivatives-energy-charge} and Hypotheses~\ref{hypo:entropy}. 
 	
 	We wish to show that $\calS(\bfz_*)> \calS(\bfz)$ for all $\bfz$, $\bfz\neq\bfz_*$, in a $\rmC^0$ neighborhood of $\bfz_*$ within $M$. 
 	Since $S\in \rmC^2((0,\infty)^{I+1})$, the Taylor theorem ensures that, as $\bfz\to \bfz_0\in (0,\infty)^{I+1}$, 
 	\[S(\bfz)-S(\bfz_0)=\rmD S(\bfz_0)(\bfz-\bfz_0)+\frac{1}{2}(\bfz-\bfz_0)^T\rmD^2S(\bfz_0)(\bfz-\bfz_0)+o_{|\bfz-\bfz_0|}(1)|\bfz-\bfz_0|^2.\]
 	For $\bfz\in \rmC(\ol\Omega,(0,\infty)^{I+1})$, we thus infer for all $x\in \Omega$,
 	\begin{align*}
    S(\bfz(x))-S(\bfz_*(x))=\rmD S(\bfz_*)(\bfz-\bfz_*) &+\frac{1}{2}(\bfz-\bfz_*)^T\rmD^2S(\bfz_*)(\bfz-\bfz_*) \\ &+o_{\|\bfz-\bfz_*\|_{\rmC(\ol\Omega)}}(1)|\bfz(x)-\bfz_*(x)|^2.
    \end{align*}
 	Here, $o$ can be chosen uniformly in $\bfz, \bfz_*$ if $\bfz(\ol\Omega), \bfz_*(\ol\Omega)\subset K$ for some fixed  $K\Subset(0,\infty)^{I+1}$.  Integration over $x\in\Omega$ thus yields
 	\begin{align}
 		\calS(\bfz)-\calS(\bfz_*)=\rmD\calS(\bfz_*)(\bfz-\bfz_*) &+\frac{1}{2}\big\langle \bfz-\bfz_*, \rmD^2\calS(\bfz_*)(\bfz-\bfz_*)\big\rangle \nonumber \\ 
        &+f(r)\|\bfz-\bfz_*\|^2_{\rmL^2(\Omega)} \label{eq:TaylorL2}
 	\end{align}
 	for all $\bfz\in\FF$ with $\bfz(\ol\Omega)\subset K$ and $\|\bfz-\bfz_*\|_{\rmC(\ol\Omega)}\le r$, where $f\in \rmC([0,\infty))$ satisfies $\lim_{r\to0} f(r)=0$. 
    
 	As in the proof of~\cite[Thm.~43.D~(2)]{Zeidler_1985_variational}, 
    we consider the functional
 	\[\mathcal F(\bfz) \coleq \calS(\bfz)-\eta\calE(\bfz)-\kappa\calQ(\bfz), \quad \bfz\in\FF.\]
 	By hypothesis, $\rmD\mathcal{F}(\bfz_*)\bfh=0$ for all $\bfh\in \rmC(\ol\Omega)^{I+1}$, where $M=\{\bfz\in\FF:\mathcal G(\bfz)=0\}$.
 	To proceed, we note that $\mathcal G$ can be extended naturally to a Fr\'echet differentiable map $\mathcal{\widetilde G}: \rmL^2(\Omega)^{I+1}\to\mathbb R^2$ (the solvability theory for the Poisson equation determining $\psi_\bfc$ does not require the components of $\bfc$ to be positive). Hence, the set $\widetilde M\coleq \{\bfz \in \rmL^2(\Omega)^{I+1}:\mathcal{\widetilde G}(\bfz)=0\}$ is a $\rmC^1$ manifold  in $\rmL^2(\Omega)^{I+1}$.
 	Thus, there exist open neighborhoods $\widetilde V\subset  T_{\bfz_*}\widetilde M$ of zero and 
 	$\widetilde N\subset \widetilde M$ of $\bfz_*$,
 	and a (homeomorphic) $\rmC^1$ parametrization $\bfvarphi:\widetilde V\subset T_{\bfz_*}\widetilde M\to \widetilde N$ satisfying   (cf.~\cite[Thm.~43.C]{Zeidler_1985_variational})
 	\[\bfvarphi(\bfh)=\bfz_*+\bfh+o(\|\bfh\|_{\rmL^2(\Omega)})\quad\text{ as }\bfh\to0\text{ in }\widetilde V.\]
 	Since the topology of $\rmL^2(\Omega)$ is coarser than that of $\rmC(\ol\Omega)$, there exists a $\rmC^0$ neighborhood $N\subset M$ of $\bfz_*$ satisfying $N\subset \widetilde N$. 
 	Upon extension of $\rmD\calS(\bfz_*)$ to a linear continuous functional on $\rmL^2(\Omega)^{I+1}$, the identity $\rmD\calS(\bfz_*)=\eta\rmD\calE(\bfz_*)+\kappa \rmD\calQ(\bfz_*)$ also holds as an equality in $(\rmL^2(\Omega)^{I+1})^*$. 
 	Furthermore, we note that the bilinear forms $\rmD^2\calS(\bfz_*),\rmD^2\calE(\bfz_*),\rmD^2\calQ(\bfz_*):\rmC(\ol\Omega)^{I+1}\times \rmC(\ol\Omega)^{I+1}\to\mathbb R$ can be continuously extended to $\rmL^2(\Omega)^{I+1}\times \rmL^2(\Omega)^{I+1}$. 
 	Using~\eqref{eq:TaylorL2} and the Taylor theorem for the smooth functional $\eta\calE+\kappa\calQ:\rmL^2(\Omega)^{I+1}\to\mathbb R$, 
 	we infer that for all $\bfh\in \bfvarphi^{-1}(N)\subset T_{z_*}\widetilde M$, 
 	\[\mathcal F(\bfvarphi(\bfh))-\mathcal F(\bfz_*)\le \big\langle \bfh,\rmD^2\mathcal F(\bfz_*)\bfh\big\rangle+o(1)\|\bfh\|_{\rmL^2}^2,\]
 	where $o(1)\to0$ as $\bfz\to \bfz_*$ in $\rmC(\ol\Omega)$.
 	Note that the left-hand side equals $\mathcal S(\bfvarphi(\bfh))-\mathcal S(\bfz_*)$ 
    since $\bfvarphi(\bfh), \bfz_* \in M$.
 	
 	To conclude, we assert that the positivity of $\eta>0$, the strong concavity of $S$, and the structure of the nonlinearity in $\calE$ imply the bound
 	$\langle \bfh,\rmD^2\mathcal F(\bfz_*)\bfh\rangle\le -c\|\bfh\|_{\rmL^2}^2$ for some $c>0$. 
 	It holds that $\rmD^2\mathcal F(\bfz_*)=\rmD^2\mathcal S(\bfz_*)-\eta\rmD^2\mathcal E(\bfz_*)$. Thus, in order to establish the asserted inequality, it suffices to show that $\langle \bfh,\rmD^2\mathcal E(\bfz_*)\bfh\rangle\ge0$ for all  $\bfh\in \rmL^2(\Omega)^{I+1}$.
But this is a consequence of the positivity of the quadratic form 
$\calB$. Indeed, for all
$\bfh=(\bfh',h_{I+1})\in \rmL^2(\Omega)^{I+1}$ and all $\bfz=(\bfc,u)\in \rmL^2(\Omega)^{I+1}$, it holds that
\[
	\big\langle \bfh,\rmD^2\calE(\bfc, u) \bfh \big\rangle = \big\langle \bfh',\rmD_{\bfc\bfc}\calE(\bfc, u) \bfh' \big\rangle = \frac12 \big\langle \bfq \cdot \bfh',L^{-1}(\bfq \cdot \bfh')\big\rangle_{\HH}=\frac12\calB(\psi_{\bfh'})\ge0.
\]
We conclude that $\calS(\bfz_*)> \calS(\bfz)$ for all $\bfz\in N'$ 
with $\bfz\neq\bfz_*$ for a sufficiently small $\rmC^0$ neighborhood $N'\subseteq N\subset M$ of $\bfz_*$.
This finishes the proof of Proposition~\ref{prop:crit=equil}.
\end{proof}

\bigskip
\noindent
\begin{proof}[Proof of Theorem \ref{th:GlobEquilibrium}] 
 		We argue by contradiction. Assume that $(\ol\bfc,\ol u)$
 		satisfies 
 		\[
 		\calS(\ol\bfc,\ol u)\geq \calS(\bfc^*,u^*), \quad  \calE(\ol\bfc,\ol
 		u)=\calE(\bfc^*,u^*)=E_0  \ \text{ and } \ \calQ(\ol\bfc, \ol u)=\calQ(\bfc^*, u^*)=Q_0. 
 		\]
 		For $\vartheta\in [0,1]$, we define a straight connection and a continuous curve 
 		\[
 		\big(\bfc_\vartheta,u_\vartheta\big) \coleq (1{-}\vartheta) \big(\bfc^*,u^*\big) + \vartheta 
 		\big(\ol\bfc,\ol u\big) \quad \text{and } \  
 		\big(\wt\bfc_\vartheta,\wt u_\vartheta\big) \coleq \big(\bfc_\vartheta,u_\vartheta\big) 
 		+ \big(0, \vartheta(1{-}\vartheta) \wt E), 
 		\]
 		where $\wt E$ is given by $\wt E(x) \coleq 
            \frac\eps2 |\nabla \psi_{\bfc^*}(x)  {-} \nabla \psi_{\ol\bfc}(x)|^2\geq 0$. 
 		Using the quadratic nature of $\calE$ and
 		the linearity of $\calQ$, we easily find 
 		\[
 		\calE(\wt\bfc_\vartheta,\wt u_\vartheta)=E_0 \ \text{ and } \ 
 		\calQ(\wt\bfc_\vartheta, \wt u_\vartheta)=Q_0 \quad
 		\text{for all } \vartheta\in [0,1],
 		\]
 		which means that $(\wt \bfc_\vartheta, \wt u_\vartheta)$ are admissible competitors. 
 		
 		By the assumption $\calS(\ol\bfc,\ol u)=\calS(\bfc_1,u_1)\geq
 		\calS(\bfc^*,u^*)=\calS(\bfc_0, u_0)$ and the concavity of $\calS$, we have 
 		$\calS(\bfc_\vartheta,u_\vartheta)\geq  \calS(\bfc^*,u^*)$ for all $\vartheta\in
 		[0,1]$. Now using $\wt E(x)\geq 0$ and $\rmD_u S(\bfc,u)>0$, we obtain 
 		\[
 		\calS(\wt\bfc_\vartheta,\wt u_\vartheta) \geq  \calS(\bfc^*,u^*) \quad\text{for all } 
 		\vartheta \in {(0,1)}.
 		\]
 		Since $\vartheta$ can be taken arbitrarily small, this contradicts the fact that
 		$(\bfc^*,u^*)$ is a strict local maximizer, which was established in
 		Proposition \ref{prop:crit=equil}. 
 		
 		Thus, the assumption $\calS(\ol\bfc,\ol u)\geq \calS(\bfc^*,u^*)$ produced a
 		contradiction, and we conclude that $  (\ol\bfc,\ol u)$ is the unique global
 		maximizer under the given constraint. 
 	\end{proof}

\section{Proofs from Section \ref{se:electro-energy}}
\label{se:proofs-electro-energy}

We first establish the properties \ref{Fact1} (density of $\rmC(\ol\Om)$) and \ref{Fact2} (unboundedness of $\rho\mapsto \int_\Omega \rho\dd x$ if $\calH(\Gamma_\rmD)>0$) for $\HH^*$ as stated in Lemma \ref{lemma:l1-density}.\medskip 

\noindent
\begin{proof}[Proof of Lemma \ref{lemma:l1-density}]
\textit{Proof of \ref{Fact1}:} 
We start with some preliminaries. We identify any function $\rho\in \rmC(\ol\Om)$ 
with an element in $\rmH^1(\Om)^*$ resp.\ in $\HH^*$ via 
$Y\ni\psi\mapsto\int_\Om\rho\psi\,\dd x$, where $Y=\rmH^1(\Om)$ resp.\ $Y=\HH$.
Furthermore, since $\HH\subset \rmH^1(\Om)$ is a closed subspace, 
there exists a unique orthogonal projection $P: \rmH^1(\Om)\to \HH$. 
Let $P^*:\HH^*\to \rmH^1(\Om)^*$ denote its adjoint. 
In order to prove that $\rmC(\ol\Om) \subset \HH^*$ is dense,
we use the fact that $\rmC(\ol\Om)\subset \rmH^1(\Om)^*$ is dense, 
which can be shown by classical mollification arguments thanks to 
the Lipschitz continuity of $\pa\Om$. Let $\rho\in \HH^*$. 	
Then, $f\coleq P^*\rho\in \rmH^1(\Om)^*$. Thus, there exist 
$(f_j)_j\subset \rmC(\ol\Om)\subset \rmH^1(\Om)^*$ with $f_j\to f$ in $\rmH^1(\Om)^*$. 
The restrictions $\rho_j\coleq {f_j}_{|\HH}\in \rmC(\ol\Om) \subset \HH^*$ 
satisfy $\rho_j\to\rho$ in $\HH^*$.

\textit{Proof of \ref{Fact2}:} We specify the construction indicated in
\eqref{eq:def-rhon-an}. As $\Omega$ has a \mbox{Lipschitz} boundary and
$\calH^{d-1}(\Gamma_\rmD)>0$, we can assume that -- after a suitable rotation
and translation of the coordinate system -- there are some $\delta>0$ and a
Lipschitz function $h: B_\delta(0)\subset \R^{d-1}\to \R$ with $h(0)=0$ such
that  
\[
\begin{aligned}
S_\delta &\coleq \bigset{(y,h(y))}{ y\in B_\delta(0)} \subset \pl\Omega, \qquad
\wt\gamma_\delta\coleq \calH^{d-1}( S_\delta \cap \Gamma_\rmD) >0, 
\\   
A_1 &\coleq \bigset{(y,h(y){+}r)}{ y\in B_\delta(0), \ r\in (0,\delta)} \subset
\Omega.
\end{aligned}
\]
We set $\Sigma_\delta \coleq \bigset{y \in B_\delta(0)}{(y,h(y))\in \Gamma_\rmD } 
\subset \R^{d-1}$ and find
\[
\gamma_\delta\coleq \calL^{d-1}( \Sigma_\delta) \geq \int_{\Sigma_\delta}
\frac{\sqrt{1{+}|\nabla h(y)|^2}}{1+\|\nabla h\|_{\rmL^\infty}} \dd y 
= \frac{\wt\gamma_\delta} {1+\|\nabla h\|_{\rmL^\infty}} \gneqq 0. 
\]
We now define the $\rmL^\infty$ functions 
\[
\rho_n \coleq \frac{n}{\delta\gamma_\delta} \,\bm1_{A_n} \quad \text{with }
A_n\coleq \Big\{x=(y,h(y){+}r) \in \Omega \; \Big| \; y\in \Sigma_\delta,\ r\in \Big( 0,\frac{\delta}{n} \Big) \Big\}. 
\] 
Clearly, we have $\calL^d(A_n)=\calL^{d-1}(\Sigma_\delta)\,\delta / n =
\gamma_\delta \delta/n$, which implies $\int_\Omega \rho_n \dd x =1$. 

To estimate $\langle \rho_n, \psi \rangle_{\HH^\ast \ti \HH}$, we use that $\psi
\in \HH$ satisfies $\psi(y,h(y))=0$ for
$\calL^{d-1}$-a.e.\ $y \in \Sigma_\delta$. Hence, for $r\in (0,\delta)$, we have 
\begin{equation}
  \label{eq:sqrt.H1}
  \big|\psi(y,h(y){+}r)\big|^2\leq \Big(\int_0^r \big|\pl_{y_d}\psi(y,h(y){+}s)\big| 
  \dd s\Big)^2 \leq r \int_0^\delta \big|\pl_{y_d}\psi(y,h(y){+}s)\big|^2\dd s  .
\end{equation}
With this, we can estimate as follows:
\begin{align*}
    \big| \langle \rho_n, \psi \rangle_{\HH} 
    \big|^2 
    &= \Big| \int_\Omega \rho_n \psi \dd x \Big|^2 = \Big|
    \frac{n}{\delta\gamma_\delta} \int_{A_n} \psi \dd x \Big|^2 \leq 
     \frac{n}{\delta\gamma_\delta} \int_{A_n} \psi(x)^2 \dd x
\\
& = \frac{n}{\delta\gamma_\delta} \int_{\Sigma_\delta} \int_0^{\delta/n}
\psi(y,h(y){+}r)^2 \dd r \dd y 
\\
&\!\overset{\text{\eqref{eq:sqrt.H1}}}\leq 
\frac{n}{\delta\gamma_\delta} \int_{\Sigma_\delta} \int_0^{\delta/n} r 
\int_0^\delta\big|\pl_{y_d}\psi(y,h(y){+}s)\big|^2\dd s  \dd r \dd y
\\
&    = \frac{\delta}{2\gamma_\delta\, n}  \int_{A_1} \big|\pl_{y_d}\psi(x) \big|^2 \dd
x  \leq \frac{\delta}{2\gamma_\delta\, n} \| \nabla \psi \|_{\rmL^2}^2 
\leq \frac{\delta}{2\gamma_\delta\, n} \| \psi\|_\HH^2 .
\end{align*}
This shows $\| \rho_n\|_{\HH^*} \leq C/\sqrt{n} \,\to \, 0$, 
and Lemma \ref{lemma:l1-density} is established.
\end{proof}
\medskip 

The final result provides the characterization of the lower energy bound $\mathrm V(Q_0,\psie)$.\medskip

\noindent
\begin{proof}[Proof of Proposition \ref{pr:MinElectroEner}]
We first treat the case (C) and discuss the necessary adjustments for (A) and
(B) afterwards.

In the case (C), we have $1_\Omega \in \HH = \rmH^1(\Omega)$ and
$\rho \mapsto \int_\Omega \rho \dd x =\langle \rho,1\rangle_\HH$ is well-defined
on all of $\HH^*$. Since $\rmE$ and the constraint are continuous in $\HH^*$, the
density in \ref{Fact1} allows us to replace the infimum for $\rmV(Q_0,\psiext)$ by
the infimum over all $\rho \in \HH^*$ with $\langle \rho,1\rangle_\HH=Q_0$.

Since $\psi_\rho$ is the unique minimizer of $\psi \mapsto \frac12\calB(\psi) -
\langle \rho, \psi \rangle_\HH$ with minimum value $-\rmE(\rho,0)$, we see
that 
\[
 \rmE(\rho,\psiext) = \frac12 \calB(\psiext) + \langle \rho, \psiext \rangle_\HH 
  + \sup\Big\{ \langle \rho,\psi\rangle_\HH - \frac12\calB(\psi) \; \Big| \; \psi \in \HH \Big\} .
\]

To include the total-charge constraint on $\rho$, we define the Lagrange
function $\calL:\HH^*\ti \HH\ti \R\to \R$ via 
\[
\calL(\rho,\psi,\kappa) \coleq \frac12 \calB(\psiext) - \frac12\calB(\psi) 
+ \langle \rho,  \psiext {+} \psi {-}\kappa \rangle_\HH  +  Q_0 \kappa,
\]
where the scalar $\kappa$ is the Lagrange parameter for the charge constraint.
Note that $\calL$ is a quadratic functional and we have
\[
  \rmV(Q_0,\psiext) = \inf_{\rho \text{\rule{0em}{1em}}\in \HH^* } \Big(
  \sup_{\psi\in \HH,\: \kappa\in \R}   \calL(\rho,\psi,\kappa) \Big).
\]
Clearly, $\rho \mapsto \calL(\rho,\psi,\kappa)$ is affine, and hence convex,
$(\psi,\kappa) \mapsto \calL(\rho,\psi,\kappa)$ is concave. Thus, we have a
classical saddle-point problem with a continuous quadratic Lagrange function.
By the theory in \cite[Chap.~VI]{EkeTem76CAVP}, it is enough to find a saddle
point, which can be done by interchanging the inf-sup into a sup-inf. By
linearity in $\rho$, we have 
\[
\inf_{\rho\in \HH^*}  \calL(\rho,\phi,\kappa)  = 
\begin{cases} 
\frac12 \calB(\psiext) - \frac12\calB(\phi) 
+  Q_0 \kappa & \text{for }\psiext+\phi \equiv \kappa ,\\
 -\infty  &\text{for } \psiext+\phi \not\equiv \kappa. 
\end{cases}
\]
The subsequent supremum over $ \phi, \kappa $ is then attained at $ \phi_*=
\kappa- \psiext$ and leads to 
\begin{align}
\sup_{\psi\in \HH} \inf_{\rho\in \HH^*}  \calL(\rho,\psi,\kappa) &= 
\frac12\calB(\psiext) - \frac12 \calB(\kappa{-}\psiext) + \kappa Q_0 \\
&= \kappa Q_0 + \kappa \bbB(1,\psiext) - \frac{\kappa^2}2 \calB(1) ,
\end{align}
where we used the bilinear form $\bbB$ for $\calB(\phi)=\bbB(\phi,\phi)$. Note
that $\calB(1)=\bbB(1,1)=\int_{\pl\Omega} \omega\dd a >0$ and
$\bbB(1,\psiext) = \int_{\pl\Omega} \omega \psiext \dd a$. 

Maximizing with respect to $\kappa$ gives the maximizer $\kappa_*= \big(Q_0 {+}
\bbB(1,\psiext) \big) /\calB(1)$, which provides the desired saddle point
$(\rho_*,\psi_*,\kappa_*)$ where $\psi_*=\kappa_*{-}\psiext$ and 
$\rho_*=L\psi_*=L(\kappa{-}\psiext)\in \HH^*$. Moreover, we have 
\[
\rmV(Q_0,\psiext)= \frac12 \calB( \psi_* {-} \psiext ) 
  = \frac12 \calB(\kappa_*)  = \frac{\kappa_*^2}2 \calB(1)
  = \frac{\kappa_*^2}2 \int_{\pl\Omega} \omega \dd a.
\]
In the case $\psiext=M(D,\psi_R)=L^{-1}N(D,\grmR)$, we use that
$L(\kappa_*\bm1_\Omega)  =N(0,\kappa_*\omega)$ to obtain
$\rho_*=L(\kappa_*{-}\psiext)= N({-}D,\kappa_*\omega{-}\grmR)$. 
With this, both assertions concerning (C) are established.\medskip

To show (B), we use
$ \HH_\rmN \coleq \bigset{\psi \in \rmH^1(\Omega)}{ \int_\Omega \psi\dd x =0}$ and
restrict to the case $Q_0 = 0$. 
For all $\psiext \in \HH_\rmN$, we easily see that $\psi_* \coleq -\psiext$ and 
$\rho_* \coleq L\psi_*= -L\psiext$ provide the unique minimizer of $\rmE$ 
defined in \eqref{eq:E.rho.psiext}, which leads to $\rmV(0,\psiext)=0$. 
In fact, since $\HH_\rmN = \mathrm{span}\{1\}^{\perp_{\rmH^1}}$, 
we have $\HH_\rmN^\ast \cong \rmH^1(\Omega)^\ast / \HH_\rmN^{\perp_{\rmH^1}} 
= \rmH^1(\Omega)^\ast / \mathrm{span}\{1\}$; see, e.g., \cite{Lang_1993_Functional}. 
Moreover, for every $\rho + \mathrm{span}\{1\} \in \HH_\rmN^\ast$ with 
$\rho \in \rmH^1(\Omega)^\ast$, there exists some $r \in \bbR$ such that 
$\langle \rho + r, 1 \rangle_{\rmH^1} = 0$. Hence, the charge constraint is 
not present at all on $\HH_\rmN^\ast$ and both parts of (B) are shown.

For the case (A), we can use \ref{Fact1} and \ref{Fact2}, which essentially means that we can adjust the
total charge $Q_0$ with arbitrarily small cost. This means that for taking the
infimum, we can drop the charge constraint completely and obtain a minimizer
with a wrong total charge $Q_*$. 
Thus, as in case (B), we minimize $\rmE(\cdot,\psiext)$ by finding the minimizer
$\psi_*$ of $\psi_\rho \mapsto \frac12\calB(\psi_\rho{+}\psiext)$ and then
letting $\rho_* \coleq L\psi_*$. The difference is that we now use $\HH_\rmD \coleq \bigset{
  \psi\in \rmH^1(\Omega) }{ \psi|_{\Gamma_\rmD} = 0}$. Clearly, we find $\psi_*=
- \psiext$ and $\rho_*=-L\psiext$ as well as the minimal energy
$\rmE(\rho_*,\psiext)=0$. Thus, the result for part (A) is established as well, and Proposition \ref{pr:MinElectroEner} is proven.
\end{proof}

\appendix
\section{An EERDS with reversible mass-action reactions}
\label{app:TimeDepModel}
	In this short appendix, we give an example for a class of (non-stationary) 
    electro--energy--reaction--diffusion systems with reversible mass-action reactions. 
    The discussion focuses on modeling aspects and the relation between critical points 
    of the total entropy and stationary states of the EERDS. We stress that the 
    rigorous analysis of the stationary problem presented in this article is completely 
    independent of the following considerations. 
	
	We start with reviewing equivalent ways of 
	expressing the evolution law \eqref{eq:general-system} following the 
	notation in \cite{Miel11GSRD} and \cite{MiPeSt21EDPC}. 	
	Without going too much into details, we recall that the dual entropy-production potential
	$
		\calP^\ast(\bfZ; \bfW) = \sup_{\bfV \in Y} 
		\big\{ \langle \bfW, \bfV \rangle - \calP(\bfZ; \bfV) \big\}
	$
	is the Legendre--Fenchel transformation of the non-negative and convex entropy-production potential 
    $\calP(\bfZ; \bfV)$ satisfying $\calP(\bfZ; 0) = 0$. 
	A fundamental relation in convex analysis are the Fenchel equivalences 
	\begin{align*}
		\bfW_0 \in \pl_\bfV \calP (\bfZ; \bfV_0) \ \Leftrightarrow \
		\calP(\bfZ; \bfV_0) + \calP^\ast(\bfZ; \bfW_0) = \langle \bfW_0, \bfV_0 \rangle 
		\ \Leftrightarrow \ \bfV_0 \in \pl_\bfW \calP^\ast (\bfZ; \bfW_0). 
	\end{align*}
	Here, we use the symbol $\pl$ to denote the possibly set-valued subdifferential 
	of a convex function. 
	The relations above allow us to express the dynamics of our EERDS in three 
	equivalent ways: 
	\begin{subequations}
		\label{eq:gradient-flow-forms}
		\begin{gather}
			0 \in \pl_\bfV \calP(\bfZ; \dot \bfZ) - \rmD \calS (\bfZ), 
			\label{eq:gradient-flow-form-force} \\ 
			\calP(\bfZ; \dot \bfZ) + \calP^\ast(\bfZ; \rmD \calS (\bfZ)) 
			= \langle \rmD \calS (\bfZ), \dot \bfZ \rangle, 
			\label{eq:gradient-flow-form-power} \\ 
			\dot \bfZ \in \pl_\bfW \calP^\ast (\bfZ; \rmD \calS (\bfZ)). 
			\label{eq:gradient-flow-form-rate}
		\end{gather}
    \end{subequations}
		The first relation is referred to as the \emph{force balance} as 
		it balances two ``vectors'' resembling the principle of actio and reactio. 
		The second identity is called the \emph{entropy balance} since it states that 
		the temporal derivative of the entropy $\calS(\bfZ)$ on the right-hand side 
		is the sum of the entropy-production potential and its dual potential 
		evaluated at $\dot \bfZ$ and $\rmD \calS(\bfZ)$, respectively. 
		And the third inclusion, which is the \emph{rate equation} 
		in case the subdifferential is single-valued, is the original 
		gradient-flow formulation from \eqref{eq:general-system}.

	We now specify the EERDS \eqref{eq:general-system} for the 
	concentrations $\bfc = (c_1, \dotsc, c_I)$ and the 
	internal energy $u$, 
	\begin{equation}
		\label{eq:EERDS.gfe}
		\binom{\dot\bfc}{\dot u} = \pl_{(\bfy,v)} \calP^*\big(\bfc,u; \rmD_\bfc
		\calS(\bfc,u), \rmD_u \calS(\bfc,u)\big), 
	\end{equation}
	by introducing a suitable choice for 
	the dual entropy-production potential $\calP^*(\bfc,u;\bfy,v)$. 
	This approach was already outlined in a similar setting in \cite{Miel11GSRD}, 
	for the case of additional bulk-interface interactions in \cite{Miel13TMER}, 
	and in the context of mass-action reaction systems in \cite{MiPeSt21EDPC}. 
	
	We prescribe the additive form $\calP^*=\calP^*_\mathrm{diff} +
	\calP^*_\mathrm{reac}$ with the diffusive dual entropy-production potential 
	\begin{align}
		\label{eq:entropy-prod-pot-diff}
		\calP^*_\mathrm{diff}(\bfc,u;\bfy,v) &\coleq \frac12 \int_\Omega 
		\binom{\nabla \bfy - v \bfq \oti \nabla \Psi}{\nabla v} : \bbM(\bfc,u) 
		\binom{\nabla \bfy - v \bfq \oti \nabla \Psi}{\nabla v} \dd x, 
	\end{align}
	where $\bbM (\bfc, u) \in \R^{(I+1)\ti (I+1)}$ is a symmetric and positive 
	semidefinite mobility matrix modeling diffusion and
	heat transfer. We additionally demand that $\bbM (\bfc, u)$ is positive  
	definite if $c_i > 0$ and $u > 0$ hold for all $i = 1, \dotsc, I$. 
	In the case of neutral species, i.e., $\bfq = 0$, this model 
	reduces to the situation studied in \cite{FHKM22GEAE,Hopf_2022}.

	We rely on the Marcelin--de Donder kinetics 
	(\cite{Marc15CECP}, \cite[Definition 3.3]{Fein72CKCC}) 
	for chemical reactions of mass-action type; 
	see	also \cite{MPPR17NETP} for a derivation via the theory of large deviations). 
	The reactive dual entropy-production potential thus reads 
	\begin{align}
		\calP^*_\mathrm{reac}(\bfc,u;\bfy,v)&\coleq \int_\Omega
		P^*_\mathrm{reac}\big(\bfc(x),u(x);\bfy(x),v(x)\big)\dd x \quad \text{with} 
		\nonumber \\
		P^*_\mathrm{reac}(\bfc,u;\bfy,v)&\coleq \sum_{r=1}^{R} k_r \prod_{i = 1}^{I} 
		\Big( \frac{c_i}{w_i(u)} \Big)^\frac{\alpha_i^r + \beta_i^r}{2}
		\sfC^*\big( \bfgamma^r  {\cdot}  \bfy\big), \label{eq:entropy-prod-pot-reac}
	\end{align}
	where $w_i:{[0,\infty)}\to {(0,\infty)}$ are increasing and concave 
	equilibrium concentrations and $\sfC^*(\xi) \coleq
	4\cosh(\xi/2) - 4= 2\big( \ee^{\xi/2} -2 + \ee^{-\xi/2}\big)$. In this context, 
	we consider $R$ pairs of forward and backward reactions of species $(C_i)_i$:
	\begin{align*}
		\sum_{i=1}^{I} \alpha_i^r C_i \ \rightleftharpoons \ \sum_{i=1}^{I} \beta_i^r C_i, 
        \qquad r = 1, \dotsc, R. 
	\end{align*}
	The chemical reactions are assumed to preserve the
	charge, i.e., the stoichiometric vectors $\bfalpha^r$ and $\bfbeta^r$ satisfy 
	$\bfgamma^r {\cdot} \bfq = 0$ for $\bfgamma^r \coleq \bfbeta^r- \bfalpha^r \in \Z^{I}$ 
	and $r=1,\ldots, R$. 
	In terms of the matrix 
	$
	\bfGamma \coleq ( \gamma_i^r )_{ri} \in \bbZ^{R \times I},
	$
	this means $\bfq \in \ker \bfGamma$. In general, there may exist 
	$N \coleq \dim \ker \bfGamma$ linearly independent conservation laws. 
	As mentioned in Section \ref{se:Intro}, we assume for simplicity that 
	the conservation of charge is the only conservation law of the reaction system, 
    i.e., $N = 1$ and $\ker \bfGamma = \operatorname{span} \bfq$. 
	
	The following remark shows that stationary points of \eqref{eq:EERDS.gfe} and 
    critical points of the total entropy $\calS$ satisfying the constraints 
    \eqref{eq:constraints} coincide in the present setting. 
	
	\begin{remark}[Critical points and stationary states]
		\label{rem:crit-stat}
		Let $(\bfc, u) \in \FF$ satisfy \eqref{eq:constraints} (with $\FF$ defined in 
        \eqref{eq:function-space}). If $(\bfc, u)$ is a critical point 
		of the total entropy $\calS$ subject to \eqref{eq:constraints}, i.e., 
		there exist constants $\eta, \kappa \in \bbR$ such that 
		\begin{align*}
			\rmD \calS(\bfc, u) = \eta \rmD \calE(\bfc, u) 
			+ \kappa \rmD \calQ(\bfc, u),
		\end{align*}
		then $(\bfc, u)$ is a stationary state of \eqref{eq:EERDS.gfe}, i.e., 
		\begin{align*}
			\partial_{(\bfy, v)} \calP^\ast \big(\bfc, u; 
			\rmD_\bfc \calS(\bfc, u), \rmD_u \calS(\bfc, u) \big) = 0. 
		\end{align*}
		Vice versa, if $(\bfc, u)$ is a stationary state of \eqref{eq:EERDS.gfe}, 
        then it is a critical point of the 
		total entropy $\calS$ under the constraints \eqref{eq:constraints}.  
		
		To see this, let $(\bfc, u) \in \FF$ be a critical 
		point of $\calS$ under the constraints \eqref{eq:constraints}. 
		It follows that $\rmD\calS$ is a linear combination of 
		$\rmD \calE$ and $\rmD \calQ$ at $(\bfc, u)$. 
		By the definition of $\calP^\ast_\mathrm{diff}$ and $\calP^\ast_\mathrm{reac}$, 
		we find together with \eqref{eq:derivatives-energy-charge} 
		that $\pl_{(\bfy,v)}\calP^* \big( \bfc, u; \eta \rmD\calE (\bfc, u) 
		+ \kappa \rmD\calQ (\bfc, u) \big) = 0$ holds 
		for all $\eta, \kappa \in \R$, hence, $(\bfc, u)$ is a stationary state 
		of \eqref{eq:EERDS.gfe}. 
		
		Let now $\bfZ = (\bfc, u) \in \FF$ be a stationary 
		state of \eqref{eq:EERDS.gfe}, i.e.\ 
		$
		\partial_\bfW \calP^\ast \big(\bfZ; \rmD \calS(\bfZ) \big) = 0. 
		$
		As $\dot \bfZ = 0$, we infer from 
		\eqref{eq:gradient-flow-form-power} and $\calP(\bfZ; 0) = 0$ that 
		\begin{align*}
			\calP_\mathrm{diff}^\ast \big(\bfZ; \rmD \calS(\bfZ) \big) = 
			\calP_\mathrm{reac}^\ast \big(\bfZ; \rmD \calS(\bfZ) \big) = 0. 
		\end{align*}
		Due to \eqref{eq:entropy-prod-pot-reac}, we obtain 
        $\bfy = -\rmD_\bfc S(\bfc, u) \in \ker \bfGamma = \operatorname{span} \bfq$ 
        for a.e.\ $x \in \Omega$. This yields $\bfy = k(x) \bfq$ with some $k(x) \in \bbR$, 
		$x \in \Omega$. 
		Invoking the hypothesis that the mobility matrix $\bbM(\bfc,u)$ is positive 
		definite for $(\bfc,u)\in(0, \infty)^{I+1}$, we deduce from 
		\eqref{eq:entropy-prod-pot-diff} that 
		\begin{align*}
			\binom{\nabla \bfy - v \bfq \oti \nabla \Psi}{\nabla v} = 0 \quad \text{a.e.\ in}\ \Omega.
		\end{align*}
		This shows that $v$ is constant in $\Omega$ and that 
		$y_i = v q_i \Psi + \kappa q_i$ with a constant $\kappa \in \bbR$ due to 
		$\bfy = k(x) \bfq$. All together, we arrive at 
		\begin{align*}
			-\rmD S(\bfc, u) = \binom{\bfy}{v} = v \binom{\bfq \Psi}{1} + \kappa \binom{\bfq}{0}, 
		\end{align*}
		which proves that $\rmD \calS(\bfc, u)$ is a linear combination of 
		$\rmD \calE(\bfc, u)$ and $\rmD \calQ(\bfc, u)$. Therefore, $(\bfc, u)$ 
		is a critical point of $\calS$ under the constraints \eqref{eq:constraints}. 
	\end{remark}

	\paragraph*{Acknowledgments.} 
    The research of M.K.\ was funded in whole 
	by the Austrian Science Fund (FWF) 10.55776/J4604. 
    The research of A.M.\ was partially supported by the
	DFG through the Berlin Mathematics Research Center MATH+ (EXC-2046/1, project
	ID: 390685689) subproject ``DistFell''. 
    For the purpose of Open Access, the authors have applied 
    a CC BY public copyright license to any 
	Author Accepted Manuscript (AAM) version arising from this submission. 
	
	\footnotesize
	
	\addcontentsline{toc}{section}{References}
	
	\bibliographystyle{alpha_AMs}
	\bibliography{EERDS_Equil}

\end{document}